\def\bfC{\mathbf{C}}
\newcommand{\Mat}{\operatorname{M}}
\newcommand{\charac}{\chi}
\newcommand{\id}{\operatorname{id}}
\newcommand{\Ker}{\operatorname{Ker}}
\newcommand{\Root}{\operatorname{Root}}
\newcommand{\Nil}{\operatorname{Nil}}
\newcommand{\End}{\operatorname{End}}
\newcommand{\Hom}{\operatorname{Hom}}
\newcommand{\Vect}{\operatorname{span}}
\newcommand{\im}{\operatorname{Im}}
\newcommand{\res}{\operatorname{res}}
\newcommand{\tr}{\operatorname{tr}}
\newcommand{\Sp}{\operatorname{Sp}}
\def\F{\mathbb{F}}
\def\K{\mathbb{K}}
\def\R{\mathbb{R}}
\def\C{\mathbb{C}}
\def\N{\mathbb{N}}
\renewcommand{\L}{\mathbb{L}}
\def\calA{\mathcal{A}}
\def\calW{\mathcal{W}}
\def\lcro{\mathopen{[\![}}
\def\rcro{\mathclose{]\!]}}
\theoremstyle{definition}
\newtheorem{Def}{Definition}[section]
\newtheorem{Not}[Def]{Notation}
\theoremstyle{plain}
\newtheorem{theo}{Theorem}[section]
\newtheorem{prop}[theo]{Proposition}
\newtheorem{cor}[theo]{Corollary}
\newtheorem{lemma}[theo]{Lemma}
\theoremstyle{plain}
\newtheorem{conj}{Conjecture}
\theoremstyle{remark}
\title{The quadratic sum problem for symplectic pairs}
\author{Cl\'ement de Seguins Pazzis\footnote{Universit\'e de Versailles Saint-Quentin-en-Yvelines, Laboratoire de Math\'ematiques
de Versailles, 45 avenue des Etats-Unis, 78035 Versailles cedex, France}
\footnote{e-mail address: dsp.prof@gmail.com}}
\begin{document}

\thispagestyle{plain}

\maketitle

\begin{abstract}
Let $(b,u)$ be a pair consisting of a symplectic form $b$ on a finite-dimensional vector space $V$ over a field $\F$, and of a $b$-alternating endomorphism $u$ of $V$ (i.e. $b(x,u(x))=0$ for all $x$ in $V$).

Let $p$ and $q$ be arbitrary polynomials of degree $2$ with coefficients in $\F$.
We characterize, in terms of the invariant factors of $u$, the condition that $u$ splits into $u_1+u_2$ for some pair $(u_1,u_2)$ of $b$-alternating endomorphisms such that $p(u_1)=q(u_2)=0$.
\end{abstract}

\vskip 2mm
\noindent
\emph{AMS Classification:} 15A23, 15A21

\vskip 2mm
\noindent
\emph{Keywords:} Symplectic forms, Invariant factors, Quadratic elements, Decomposition.


\section{Introduction}

\subsection{Notation and basic definition}

Throughout the article, we let $\F$ be an arbitrary field  and we choose an algebraic closure $\overline{\F}$ of it.
We denote the characteristic of $\F$ by $\charac(\F)$, and we leave open the possibility that $\charac(\F)=2$.
\emph{All the vector spaces under consideration are assumed finite-dimensional}, and we will never repeat this assumption.

Let $p \in \F[t]$ be a \emph{monic} polynomial of degree $n >0$.
The trace of $p$, denoted by $\tr p$, is defined as the opposite of the coefficient of $p$ on $t^{n-1}$:
it is the sum of all roots of $p$ in $\overline{\F}$, counted with multiplicities.
We also denote by $\Root(p)$ the set of all roots of $p$ in $\overline{\F}$.

Let $\mathcal{A}$ be an $\F$-algebra. An element $x$ of $\mathcal{A}$ is called \textbf{quadratic} whenever
$x^2 \in \Vect_\F(1_\mathcal{A},x)$, i.e.\ $p(x)=0$ for some monic $p \in \F[t]$ of degree $2$.

Finally, we will use the French notation for integers: $\N$ denotes the set of all non-negative integers, and $\N^*$ the set of all positive ones.

\subsection{The problem}

Throughout, we let $p$ and $q$ be fixed monic polynomials of $\F[t]$ with degree $2$, and we set
$$\delta:=\tr p-\tr q.$$
A \textbf{$(p,q)$-difference} in the $\F$-algebra $\mathcal{A}$ is an element $x$ of $\mathcal{A}$ that splits into $x_1-x_2$ where $x_1,x_2$ belong to $\mathcal{A}$ and $p(x_1)=0$ and $q(x_2)=0$. The study of such differences in matrix algebras was started by Hartwig and Putcha
in \cite{HP}, who tackled the case where $p=q=t^2-t$, i.e.\ the differences of pairs of idempotent matrices, over algebraically closed
fields of characteristic not $2$. Later, other cases where studied by Wang and Wu \cite{Wanggeneral,WuWang},
with generalizations to arbitrary fields achieved two decades later by Botha \cite{Bothasquarezero} and the author of the present article \cite{dSPidem2,dSPsumoftwotriang}. It is only recently \cite{dSPregular,dSPsumexceptional} that a complete answer to the $(p,q)$-difference problem in matrix algebras was achieved (with an arbitrary field $\F$ and arbitrary polynomials $p$ and $q$ of degree $2$, possibly irreducible over $\F$).

In a recent article \cite{dSPsquarezeroquadratic}, we started examining similar situations in the context of vector spaces equipped with non-degenerate symmetric or alternating bilinear forms. More precisely, given a vector space $V$ and a non-degenerate symmetric or alternating bilinear form $b$ on $V$, we have characterized all the $b$-selfadjoint (respectively, the $b$-skew-selfadjoint) endomorphisms of $V$ that split into the difference of two square-zero $b$-selfadjoint (respectively, $b$-skew-selfadjoint) endomorphisms of $V$ (the only case left out being the one where $\charac(\F) = 2$).
To be more precise, the characterization is in terms of the fundamental invariants for the class of the endomorphism under the conjugation action of the isometry group of $b$.

We view \cite{dSPsquarezeroquadratic} as the first systematic attempt to tackle the $(p,q)$-difference problems in the context of regular quadratic or symplectic spaces, although
other attempts had been made earlier for specific fields (see e.g.\ \cite{delaCruz} for the $(t^2,t^2)$-difference problem over the complex numbers).

In the present article, we will completely solve the $(p,q)$-difference problem in the context of alternating endomorphisms for a symplectic form $b$. To make things clearer, let $b$ be a symplectic form on a vector space $V$. An endomorphism $u$ of $V$ is called
\textbf{$b$-alternating} whenever the bilinear form $(x,y) \mapsto b(x,u(y))$ is alternating, that is $b(x,u(x))=0$ for all $x \in V$.
In that case $u$ is \textbf{$b$-selfadjoint}, that is
$$\forall (x,y)\in V^2, \; b(u(x),y)=b(x,u(y)).$$
Note that if $\charac(\F) \neq 2$ then every $b$-selfadjoint endomorphism is $b$-alternating, but this fails if $\chi(\F)=2$.

The $b$-alternating endomorphisms that split into $u_1-u_2$, where $u_1$ and $u_2$ are $b$-alternating endomorphisms such that $p(u_1)=q(u_2)=0$,
are called the \textbf{alternating $(p,q)$-differences}. The aim of the present article is to characterize such differences.
To understand what we mean by ``characterize", we recall (see Theorem \ref{theo:Scharlau} on page \pageref{theo:Scharlau}) that the $b$-alternating endomorphisms are classified, up to conjugation by an element of the symplectic group $\Sp(b)$ of $b$, by the invariant factors of the endomorphism $u$.
Since being an alternating $(p,q)$-difference is obviously invariant under conjugating by an element of $\Sp(b)$,
in theory it should be possible to characterize the alternating $(p,q)$-differences in terms of their invariant factors.

Finally, note that the alternating $(p,q)$-differences are the alternating $(p,q(-t))$-\emph{sums}, i.e.\
the endomorphisms that split into $u_1+u_2$ where $u_1$ and $u_2$ are $b$-alternating endomorphisms such that $p(u_1)=q(-u_2)=0$.
It turns out (see \cite{dSPregular}) that the characterization of $(p,q)$-differences in matrix algebras is slightly more elegant than the one of $(p,q)$-sums, so we prefer to frame the problem in terms of the former.

\subsection{The viewpoint of symplectic pairs}

An efficient viewpoint for our problem is the one of \emph{symplectic pairs}.

\begin{Def}
A \textbf{symplectic pair} $(b,u)$ consists of a symplectic bilinear form $b$ on a vector space $V$ and of a $b$-alternating endomorphism $u$ of $V$. Such a pair is called \textbf{trivial} if $V=\{0\}$.

A symplectic pair $(b,u)$ is called a \textbf{symplectic $(p,q)$-difference} when there exist $b$-alternating endomorphisms $u_1$ and $u_2$ such that $p(u_1)=q(u_2)=0$ and $u=u_1-u_2$.
\end{Def}

Two symplectic pairs $(b,u)$ and $(b',u')$, over respective vector spaces $V$ and $V'$, are called \textbf{isometric}, and we write $(b,u) \simeq (b',u')$, whenever there exists a vector space isomorphism $\varphi : V \overset{\simeq}{\rightarrow} V'$, such that
$b'(\varphi(x),\varphi(y))=b(x,y)$ for all $x,y$ in $V$, and $u'(\varphi(x))=\varphi(u(x))$ for all $x \in V$.

The orthogonal direct sum of $(b,u)$ and $(b',u')$ is the symplectic pair $(b \bot b',u \oplus u')$
defined on $V \times V'$. Orthogonal direct sums are compatible with isometry.

As far as our problem is concerned, we have the following easy results:

\begin{prop}
Let $(b,u)$ and $(b',u')$ be isometric symplectic pairs.
Then $(b,u)$ is a symplectic $(p,q)$-difference if and only if $(b',u')$ is a symplectic $(p,q)$-difference.
\end{prop}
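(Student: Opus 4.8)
The plan is essentially to unwind the definitions: being a symplectic $(p,q)$-difference is an isometry-invariant property because conjugation by an isometry carries an alternating $(p,q)$-decomposition to another one. Since the isometry relation is symmetric, it suffices to prove one implication, say that if $(b,u)$ is a symplectic $(p,q)$-difference then so is $(b',u')$.

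To do this, I would start from a witnessing decomposition $u = u_1 - u_2$, where $u_1,u_2$ are $b$-alternating endomorphisms of $V$ with $p(u_1)=0$ and $q(u_2)=0$, and fix an isomorphism $\varphi : V \to V'$ realizing $(b,u) \simeq (b',u')$. I set $u_i' := \varphi \circ u_i \circ \varphi^{-1}$ for $i \in \{1,2\}$. From $\varphi \circ u = u' \circ \varphi$ one gets $u' = \varphi \circ u \circ \varphi^{-1} = \varphi \circ (u_1-u_2) \circ \varphi^{-1} = u_1' - u_2'$. Since $x \mapsto \varphi \circ x \circ \varphi^{-1}$ is an isomorphism of $\F$-algebras from $\End(V)$ onto $\End(V')$, it follows that $p(u_1') = \varphi \circ p(u_1) \circ \varphi^{-1} = 0$, and likewise $q(u_2') = 0$.

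It then remains to check that each $u_i'$ is $b'$-alternating: given $x \in V'$, write $x = \varphi(y)$ with $y \in V$, and compute $b'(x,u_i'(x)) = b'(\varphi(y),\varphi(u_i(y))) = b(y,u_i(y)) = 0$, using the isometry identity $b'(\varphi(\,\cdot\,),\varphi(\,\cdot\,)) = b(\,\cdot\,,\,\cdot\,)$ and the fact that $u_i$ is $b$-alternating. Hence $u' = u_1' - u_2'$ exhibits $(b',u')$ as a symplectic $(p,q)$-difference, and applying the same argument to $\varphi^{-1}$ gives the converse. There is no genuine obstacle in this proof; the only point worth isolating explicitly is that transporting a $b$-alternating endomorphism through an isometry produces a $b'$-alternating one, which is immediate from the definition of isometric symplectic pairs.
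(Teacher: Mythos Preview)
Your proof is correct. The paper does not actually give a proof of this proposition (it is merely flagged as an ``easy result''), and your argument is precisely the natural transport-of-structure verification one would expect: conjugating a witnessing decomposition through the isometry and checking that $b$-alternation is preserved.
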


\begin{prop}\label{prop:orthogonalsum}
Let $(b,u)$ and $(b',u')$ be symplectic $(p,q)$-differences.
Then $(b,u) \bot (b',u')$ is a symplectic $(p,q)$-difference.
\end{prop}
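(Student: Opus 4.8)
The plan is to build the required decomposition of $(b \bot b', u \oplus u')$ directly out of the decompositions of $(b,u)$ and $(b',u')$, by forming an ``orthogonal direct sum of decompositions''. By hypothesis we may write $u = u_1 - u_2$ with $u_1, u_2$ both $b$-alternating and $p(u_1) = q(u_2) = 0$, and likewise $u' = u'_1 - u'_2$ with $u'_1, u'_2$ both $b'$-alternating and $p(u'_1) = q(u'_2) = 0$. I then set $w_1 := u_1 \oplus u'_1$ and $w_2 := u_2 \oplus u'_2$, both regarded as endomorphisms of $V \times V'$, and claim that the pair $(w_1, w_2)$ witnesses that $(b \bot b', u \oplus u')$ is a symplectic $(p,q)$-difference.

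Three verifications remain. First, $u \oplus u' = w_1 - w_2$, which is immediate from the blockwise identity $(\alpha \oplus \alpha') - (\beta \oplus \beta') = (\alpha - \beta) \oplus (\alpha' - \beta')$. Second, $p(w_1) = 0$ and $q(w_2) = 0$: the block-diagonal embedding $\End(V) \times \End(V') \hookrightarrow \End(V \times V')$ is a morphism of unital $\F$-algebras, so for every $f \in \F[t]$ one has $f(w_1) = f(u_1) \oplus f(u'_1)$, whence $p(w_1) = p(u_1) \oplus p(u'_1) = 0$, and likewise $q(w_2) = q(u_2) \oplus q(u'_2) = 0$. Third, $w_1$ and $w_2$ are $(b \bot b')$-alternating: for $(x,x') \in V \times V'$ the definition of the orthogonal sum of forms gives $(b \bot b')\bigl((x,x'), w_1(x,x')\bigr) = b(x, u_1(x)) + b'(x', u'_1(x')) = 0$, since $u_1$ is $b$-alternating and $u'_1$ is $b'$-alternating; the computation for $w_2$ is identical.

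None of these steps is a genuine obstacle: the proposition is of a bookkeeping nature. The only point deserving a moment of care is the second one, namely that evaluation of polynomials is compatible with orthogonal direct sums, which rests on the block-diagonal embedding being a unital algebra morphism; once that is granted, and once one observes that the alternating condition $b(x,u(x))=0$ is simply additive across the orthogonal decomposition, the conclusion is immediate. An obvious induction then extends the statement to any finite orthogonal direct sum of symplectic $(p,q)$-differences.
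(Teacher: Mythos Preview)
Your proof is correct and is exactly the straightforward verification the paper has in mind: the proposition is stated there as one of the ``easy results'' and left without proof, and your direct construction $w_i:=u_i\oplus u'_i$ together with the three routine checks is precisely the intended argument.
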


Let $(b,u)$ be a symplectic pair with underlying vector space $V$.
Let $U$ be a linear subspace of $V$ that is stable under $u$.
Denote by $U^{\bot_b}$ the $b$-orthogonal complement of $U$.
Remember that $b$ induces a symplectic form $\overline{b}$ on the quotient space $U/(U \cap U^{\bot_b})$.
Besides, $U^{\bot_b}$ is stable under $u$ (because $u$ is $b$-selfadjoint) and hence $U \cap U^{\bot_b}$ is also stable under $u$.
Therefore $u$ induces an endomorphism $\overline{u}$ of  $U/(U \cap U^{\bot_b})$, and obviously $\overline{u}$
is $\overline{b}$-alternating. We recover a symplectic pair $(\overline{b},\overline{u})$
and we denote it by $(b,u)^U$: it is called the symplectic pair \textbf{induced} by $(b,u)$ on $U$.
The following basic lemma will be particularly useful:

\begin{lemma}
Let $(b,u)$ be a symplectic pair, with underlying vector space $V$. Assume that $u=u_1-u_2$, where $u_1$ and $u_2$ are $b$-alternating endomorphisms of $V$
such that $p(u_1)=q(u_2)=0$. Let $U$ be a linear subspace of $V$ that is stable under $u_1$ and $u_2$.
Then, $(b,u)^U$ is a symplectic $(p,q)$-difference.
\end{lemma}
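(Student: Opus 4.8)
The plan is to descend the decomposition $u = u_1 - u_2$ to the quotient space $W := U/(U \cap U^{\bot_b})$ on which the induced pair $(\overline{b},\overline{u}) = (b,u)^U$ lives. First I would note that $U$ is stable under $u = u_1 - u_2$, and that, since $u_1$ and $u_2$ are $b$-selfadjoint and $U$ is stable under both, the orthogonal complement $U^{\bot_b}$ is also stable under $u_1$ and $u_2$; hence so is the intersection $R := U \cap U^{\bot_b}$, which is exactly the radical of the restriction of $b$ to $U$. Therefore $u_1$, $u_2$ and $u$ all induce endomorphisms $\overline{u_1}$, $\overline{u_2}$ and $\overline{u}$ of $W$, and by additivity of this "passing to the quotient" operation we get $\overline{u} = \overline{u_1} - \overline{u_2}$.

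Next I would check that $\overline{u_1}$ and $\overline{u_2}$ are $\overline{b}$-alternating. The form $(x,y) \mapsto b(x, u_i(y))$ restricts to an alternating form on $U$, and I claim $R$ lies in its radical: for $y \in R$ and $x \in U$, $b$-selfadjointness of $u_i$ gives $b(x, u_i(y)) = b(u_i(x), y) = 0$, because $u_i(x) \in U$ and $y \in U^{\bot_b}$. Consequently that form factors through $W$, where it coincides with $(x,y) \mapsto \overline{b}(x, \overline{u_i}(y))$; being a quotient of an alternating form, it is alternating, so indeed $\overline{u_i}$ is $\overline{b}$-alternating.

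Finally, evaluation of a polynomial commutes with the algebra morphism $\End(U) \supseteq \F[u_i] \to \End(W)$ induced by passing to the quotient, so $p(\overline{u_1}) = \overline{p(u_1)} = 0$ and $q(\overline{u_2}) = \overline{q(u_2)} = 0$. Combining this with $\overline{u} = \overline{u_1} - \overline{u_2}$ exhibits $(b,u)^U = (\overline{b},\overline{u})$ as a symplectic $(p,q)$-difference. There is no genuine obstacle in this argument; the only point deserving a moment's attention is the verification that $R$ sits inside the radical of each form $(x,y) \mapsto b(x,u_i(y))|_{U \times U}$, which is precisely what legitimizes calling the induced endomorphisms $\overline{b}$-alternating, and it follows at once from $b$-selfadjointness of $u_1$ and $u_2$.
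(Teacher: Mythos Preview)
Your proof is correct and follows essentially the same route as the paper's: pass $u_1$ and $u_2$ to the quotient $U/(U\cap U^{\bot_b})$, observe that the induced endomorphisms are $\overline{b}$-alternating, still annihilated by $p$ and $q$ respectively, and still have difference $\overline{u}$. You simply supply more detail at the one point the paper leaves implicit, namely the verification that $R=U\cap U^{\bot_b}$ lies in the radical of $(x,y)\mapsto b(x,u_i(y))$ restricted to $U$, which is exactly what is needed to conclude that $\overline{u_i}$ is $\overline{b}$-alternating.
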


\begin{proof}
Note that $u=u_1-u_2$ stabilizes $U$, and hence $(\overline{b},\overline{u}):=(b,u)^U$ is well defined. Next, we see that both $u_1$ and $u_2$ stabilize $U \cap U^{\bot_b}$, and hence they induce $\overline{b}$-alternating endomorphisms $\overline{u_1}$ and $\overline{u_2}$ of
$U/(U \cap U^{\bot_b})$. Noting that $\overline{u}=\overline{u_1}-\overline{u_2}$ and that $p(\overline{u_1})=0=q(\overline{u_2})$, we conclude that
$(b,u)^U$ is a symplectic $(p,q)$-difference.
\end{proof}

Let us now recall the construction of the symplectic extension of an endomorphism.
Let $u$ be an endomorphism of a vector space $V$. Denote by $V^\star:=\Hom(V,\F)$ the dual space of $V$, and
by $u^t : \varphi \in V^\star \longmapsto \varphi \circ u \in V^\star$ the transpose of $u$.

\begin{Def}
On $V \times V^\star$, we have the standard symplectic form
$$S_V : \begin{cases}
(V \times V^\star)^2 & \longrightarrow \F \\
\bigl((x,\varphi),(y,\psi)\bigr) & \longmapsto \varphi(y)-\psi(x)
\end{cases}$$
and the extended endomorphism
$$h(u):=u \oplus u^t.$$
It is easy to check that $h(u)$ is $S_V$-alternating. The pair $(S_V,h(u))$ is called the \textbf{symplectic extension}
of $u$, denoted by $S(u)$.
\end{Def}

One checks (see section 2.2 of \cite{dSPsquarezeroquadratic}) that
$S(v_1 \oplus \cdots \oplus v_n) \simeq S(v_1) \bot \cdots \bot S(v_n)$ for every list $(v_1,\dots,v_n)$
of endomorphisms of (potentially distinct) vector spaces.

\vskip 3mm
We are now able to state the classification of symplectic pairs, as obtained by Scharlau \cite{Scharlaupairs}:

\begin{theo}[Scharlau]\label{theo:Scharlau}
\begin{enumerate}[(a)]
\item Every symplectic pair $(b,u)$ is isometric to a symplectic extension.

\item Given endomorphisms $v_1,v_2$ of vector spaces, the symplectic extensions $S(v_1)$ and $S(v_2)$ are isometric
if and only if $v_1$ and $v_2$ are similar.

\item Two symplectic pairs $(b_1,u_1)$ and $(b_2,u_2)$ are isometric if and only if $u_1$ is similar to $u_2$.
\end{enumerate}
\end{theo}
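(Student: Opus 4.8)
The substance of the theorem lies in assertion (a); assertions (b) and (c) then follow by essentially formal arguments, sketched at the end. To prove (a), the plan is to reduce it to the following statement: the underlying space $V$ of $(b,u)$ decomposes as $V=W\oplus W'$, where $W$ and $W'$ are both $u$-stable and totally $b$-isotropic. Granting this, $W$ and $W'$ are complementary Lagrangians of $(V,b)$, so $b$ restricts to a perfect pairing between them; hence $\theta\colon w'\mapsto b(w',\cdot)|_W$ is an isomorphism from $W'$ onto $W^\star$, and $\id_W\oplus\theta$ is an isomorphism from $V$ onto $W\oplus W^\star$. A short computation shows it is an isometry from $(b,u)$ onto $S(u|_W)$: it carries $b$ to $S_W$ because $b$ is alternating (hence skew-symmetric), and it carries $u$ to $u|_W\oplus(u|_W)^t$ because $b(w',u(w))=b(u(w'),w)$, i.e.\ because $u$ is $b$-selfadjoint. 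So it suffices to produce such a decomposition.

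The engine is the following lemma, valid regardless of the characteristic: \emph{in a symplectic pair $(b,u)$, every cyclic $\F[u]$-submodule of $V$ is totally $b$-isotropic.} It is enough to show $b(z,u^k(z))=0$ for all $z\in V$ and all $k\ge0$, for then $b(u^i(z),u^j(z))=b(z,u^{i+j}(z))=0$ by $b$-selfadjointness. Using $b$-selfadjointness again, $b(z,u^k(z))=b(u^j(z),u^{k-j}(z))$ for every $0\le j\le k$; if $k=2j$ this equals $b(y,y)=0$ with $y:=u^j(z)$ (using that $b$ is alternating), and if $k=2j+1$ it equals $b(y,u(y))=0$ with $y:=u^j(z)$ (using that $u$ is $b$-alternating). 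I expect the correct handling of this even/odd split to be the one genuinely delicate point: in characteristic $2$ one really must invoke the $b$-alternating hypothesis for the vectors $u^j(z)$, and cannot get by with skew-symmetry of $b$ and $b$-selfadjointness of $u$ alone.

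With this in hand I would prove (a) by induction on $\dim V$, the case $V=\{0\}$ being trivial. Assume $V\ne\{0\}$, let $\mu$ be the minimal polynomial of $u$, choose $x\in V$ with annihilator $(\mu)$ in $\F[t]$, and set $C:=\F[u]\,x\cong\F[t]/(\mu)$; this is a $u$-stable subspace of dimension $\deg\mu\ge1$, totally isotropic by the lemma. Since $b$ is non-degenerate, the restriction map $V^\star\to C^\star$ is onto, so one may pick $w_0\in V$ such that $b(w_0,\cdot)|_C$ is a generator $\phi_0$ of the cyclic $\F[u]$-module $C^\star$, and set $C':=\F[u]\,w_0$. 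From $b$-selfadjointness, $g(u)(w_0)=0$ forces $\phi_0\circ g(u)|_C=0$, hence $g\in(\mu)$; so the annihilator of $w_0$ is $(\mu)$ as well, and $g(u)(w_0)\mapsto\phi_0\circ g(u)|_C$ is a well-defined isomorphism from $C'$ onto $C^\star$, i.e.\ $b$ restricts to a perfect pairing between $C$ and $C'$. As $C'$ is totally isotropic (again by the lemma), this forces $C\cap C'=\{0\}$, and $P:=C\oplus C'$ is a non-degenerate $u$-stable subspace in which $C$ and $C'$ are complementary Lagrangians; by the first paragraph $(b|_P,u|_P)\simeq S(u|_C)$. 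Then $V=P\botoplus P^{\bot_b}$, the symplectic pair $(b|_{P^{\bot_b}},u|_{P^{\bot_b}})$ has strictly smaller dimension, and combining the induction hypothesis with the identity $S(v_1)\bot S(v_2)\simeq S(v_1\oplus v_2)$ recalled before the theorem proves (a). (The one remaining technicality along the way is the bookkeeping just indicated, showing that $C'$ has annihilator exactly $(\mu)$ and pairs perfectly with $C$.)

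Finally I turn to (b) and (c). For the ``if'' part of (b): if $v_2=\psi v_1\psi^{-1}$, then $(x,\xi)\mapsto(\psi(x),\xi\circ\psi^{-1})$ is directly verified to be an isometry from $S(v_1)$ onto $S(v_2)$. Conversely, an isometry is in particular a similarity of the underlying endomorphisms, so $S(v_1)\simeq S(v_2)$ yields $v_1\oplus v_1^t\sim v_2\oplus v_2^t$; since every endomorphism is similar to its transpose, this reads $v_1^{\oplus2}\sim v_2^{\oplus2}$, whence $v_1\sim v_2$ by uniqueness of the decomposition into indecomposables for finitely generated torsion $\F[t]$-modules (the multiplicity of each indecomposable simply halves). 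For (c), the direct implication is immediate; conversely, if $u_1\sim u_2$, use (a) to write $(b_i,u_i)\simeq S(v_i)$, observe $v_i^{\oplus2}\sim v_i\oplus v_i^t\sim u_i$, deduce $v_1^{\oplus2}\sim v_2^{\oplus2}$ and hence $v_1\sim v_2$ as above, and conclude $(b_1,u_1)\simeq S(v_1)\simeq S(v_2)\simeq(b_2,u_2)$ using (b).
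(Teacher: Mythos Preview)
Your proof is correct and self-contained. Note, however, that the paper does not actually prove Theorem~\ref{theo:Scharlau}: it is stated as a known result and attributed to Scharlau~\cite{Scharlaupairs}, with no argument given. So there is no ``paper's own proof'' to compare against; you have supplied what the paper omits.

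That said, the approach you take --- proving first that every cyclic $\F[u]$-submodule is totally $b$-isotropic (handling the even/odd exponent cases separately, which is indeed where the $b$-alternating hypothesis is genuinely needed over fields of characteristic~$2$), then peeling off a hyperbolic summand $C\oplus C'$ built from a vector of maximal annihilator and a suitable dual partner, and inducting --- is essentially the standard route to this result, and is in the spirit of Scharlau's original argument. Your deductions of (b) and (c) from (a) via the Krull--Schmidt/invariant-factor cancellation $v_1^{\oplus 2}\sim v_2^{\oplus 2}\Rightarrow v_1\sim v_2$ are clean and correct. The one place a reader might pause is the claim that $C^\star$ is a cyclic $\F[u]$-module; this is true because $C\cong\F[t]/(\mu)$ is self-dual as an $\F[t]$-module, but it might be worth a word. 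Otherwise the argument is complete.
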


Point (c) means that the isometry ``class" of $(b,u)$ is controlled by the invariant factors of the endomorphism $u$.
Moreover, since every endomorphism is similar to its transpose, it follows from point (a) in Theorem \ref{theo:Scharlau} that these invariant factors read $p_1,p_1,p_2,p_2,\dots,p_n,p_n,\dots$ for monic polynomials $p_1,p_2,\dots,p_n,\dots$ such that $p_{i+1}$ divides $p_i$ for all $i \geq 1$.
If $(b,u) \simeq S(v)$ for some endomorphism $v$, then $p_1,\dots,p_n,\dots$
are the invariant factors of $v$, and they control the isometry class of $(b,u)$.

The following easy lemma will be very useful to solve our problem:

\begin{lemma}\label{lemma:fromA}
Let $v \in \End(V)$ be a $(p,q)$-difference.
Then $S(v)$ is a symplectic $(p,q)$-difference.
\end{lemma}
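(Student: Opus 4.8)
The plan is to use the obvious candidates obtained by applying the symplectic extension construction to the given decomposition of $v$. Write $v = v_1 - v_2$ where $v_1, v_2 \in \End(V)$ satisfy $p(v_1)=0$ and $q(v_2)=0$, and set
$$w_1 := h(v_1) = v_1 \oplus v_1^t \quad\text{and}\quad w_2 := h(v_2) = v_2 \oplus v_2^t,$$
two endomorphisms of $V \times V^\star$. I claim that the pair $(w_1,w_2)$ witnesses that $S(v) = (S_V,h(v))$ is a symplectic $(p,q)$-difference.

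There are three things to check, all of them immediate. First, $w_1$ and $w_2$ are $S_V$-alternating: this is exactly the fact recalled right after the definition of the symplectic extension, applied to $v_1$ and to $v_2$. Second, $h(v) = w_1 - w_2$: since transposition is linear, $h$ is additive, so $w_1 - w_2 = h(v_1) - h(v_2) = (v_1-v_2) \oplus (v_1^t - v_2^t) = (v_1-v_2)\oplus(v_1-v_2)^t = h(v_1-v_2) = h(v)$. Third, $p(w_1)=0$ and $q(w_2)=0$: for any $u \in \End(V)$ and any $r \in \F[t]$ one has $r(u^t) = r(u)^t$ (because $(\cdot)^t$ is linear and $(u^t)^k = (u^k)^t$) and $r(u \oplus u^t) = r(u) \oplus r(u^t)$ (because $(u\oplus u^t)^k = u^k \oplus (u^t)^k$); hence $p(w_1) = p(h(v_1)) = p(v_1) \oplus p(v_1)^t = 0$, and likewise $q(w_2)=0$.

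I do not expect any genuine obstacle here: the only steps requiring more than a glance at the definitions are the two elementary algebraic observations that evaluating a polynomial commutes with forming a direct sum and with transposition. In fact the argument shows slightly more, namely that $S(\cdot)$ converts any additive splitting of $v$ into a splitting of $h(v)$ into $S_V$-alternating summands while preserving all polynomial identities, which is the form of the statement we shall rely on later.
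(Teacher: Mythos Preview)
Your proof is correct and follows the same approach as the paper: write $v=v_1-v_2$ with $p(v_1)=q(v_2)=0$, set $w_i=h(v_i)$, and verify that $h(v)=h(v_1)-h(v_2)$, that $p(h(v_1))=h(p(v_1))=0$ and $q(h(v_2))=h(q(v_2))=0$, and that each $h(v_i)$ is $S_V$-alternating. You have simply spelled out the routine verifications (additivity of $h$, compatibility of polynomial evaluation with $\oplus$ and with $(\cdot)^t$) that the paper leaves implicit.
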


\begin{proof}
We write $v=v_1-v_2$ where $v_1$ (resp. $v_2$) is an endomorphism of $V$
that is annihilated by $p$ (resp. by $q$).
One checks that $h(v)=h(v_1)-h(v_2)$ and that $p(h(v_1))=h(p(v_1))=0$ and $q(h(v_2))=h(q(v_2))=0$.
Since $h(v_1)$ and $h(v_2)$ are $S_V$-alternating, this shows that $S(v)$ is a symplectic $(p,q)$-difference.
\end{proof}

In \cite{dSPsquarezeroquadratic}, we proved that, given an endomorphism $v$
of a vector space $V$, the symplectic pair $S(v)$ (which was denoted by $H_{-1,1}(v)$ there)
is a symplectic $(t^2,t^2)$-difference if and only if $v$ is a $(t^2,t^2)$-difference in the algebra
$\End(V)$. Moreover, it was proved that a symplectic pair $(b,u)$ is
a symplectic $(t^2,t^2)$-difference if and only if $u$ is a $(t^2,t^2)$-difference in the algebra
$\End(V)$.

One might conjecture that these two results could be extended to an arbitrary pair
$(p,q)$ of monic polynomials with degree $2$.

\begin{conj}\label{conj:conj1}
For a symplectic pair $(b,u)$ to be a symplectic $(p,q)$-difference, it is necessary and sufficient that
$u$ be a $(p,q)$-difference in $\End(V)$.
\end{conj}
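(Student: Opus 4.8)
The plan is to dispatch the easy necessity at once and to reduce the substantive direction to the already-known classification of $(p,q)$-differences in full matrix algebras. Necessity is immediate: if $(b,u)$ is a symplectic $(p,q)$-difference, witnessed by $u=u_1-u_2$ with $u_1,u_2$ being $b$-alternating and $p(u_1)=q(u_2)=0$, then forgetting that $u_1$ and $u_2$ are $b$-alternating exhibits $u$ as a $(p,q)$-difference in $\End(V)$. For sufficiency, suppose $u$ is a $(p,q)$-difference in $\End(V)$. By Theorem \ref{theo:Scharlau}(a) we may fix an endomorphism $v$ of a vector space $W$ with $(b,u) \simeq S(v)$, and since being a symplectic $(p,q)$-difference is an isometry invariant it is enough to show that $S(v)$ is a symplectic $(p,q)$-difference. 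As $(b,u) \simeq (S_V, v \oplus v^t)$ forces $u$ to be similar to $v \oplus v^t$, and $v^t$ is similar to $v$, the hypothesis says precisely that $v \oplus v$ is a $(p,q)$-difference in $\End(W \oplus W)$. By Lemma \ref{lemma:fromA} it thus suffices to prove the purely linear-algebraic implication: \emph{if $v \oplus v$ is a $(p,q)$-difference in a matrix algebra, then so is $v$.}

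To attack this I would invoke the complete solution of the matrix $(p,q)$-difference problem from \cite{dSPregular,dSPsumexceptional}. There the endomorphism under study is split into a ``generic'' part --- on which the spectrum avoids the finitely many exceptional configurations attached to $p$, $q$ and $\delta$ --- and finitely many ``exceptional'' parts, and on each block the criterion is a combinatorial condition on the multiset of elementary divisors. On the generic part the condition does not see multiplicities, so it descends from $v \oplus v$ to $v$ with no effort; all the real work sits on the exceptional blocks, where one must check that the pertinent constraints on the numbers of elementary divisors of each prescribed type survive the division of every multiplicity by $2$.

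That last point is the crux, and it is not formal: the exceptional-case criteria in \cite{dSPregular,dSPsumexceptional} are typically built from parity conditions and threshold inequalities on the numbers of Jordan-type blocks of given sizes, and while doubling all these counts trivially preserves such conditions, halving them need not. So the displayed implication is genuinely at stake. I see two possible outcomes. Either one manages to build the two $b$-alternating summands $u_1$ and $u_2$ directly on the symplectic extension, exploiting its built-in hyperbolic splitting $W \times W^\star$ to absorb the missing factor of $2$ --- in which case the detour through $v$ becomes unnecessary and Conjecture \ref{conj:conj1} holds --- or the halving genuinely breaks down for some exceptional block and some pair $(p,q)$, in which case Conjecture \ref{conj:conj1} is false and the correct statement must retain, on the offending blocks, a condition strictly stronger than ``$u$ is a matrix $(p,q)$-difference''. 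Given that the characterisation announced in this paper is phrased through the invariant factors of $u$ rather than via Conjecture \ref{conj:conj1}, I would expect the second outcome to occur for at least some $(p,q)$, and I would expect the heart of the paper to be the block-by-block analysis that pins down where, and by how much, the symplectic condition departs from the matrix one.
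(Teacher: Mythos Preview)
Your skepticism is well placed: Conjecture~\ref{conj:conj1} is false, as the paper shows immediately after stating it. However, your proposal does not actually settle the question either way---you outline a possible route to a proof, correctly identify the step that might fail, and then stop at ``I would expect the second outcome to occur''. That is an analysis, not a disproof.

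The paper's counterexample is far more direct than the machinery you are setting up. Take $p=q$ irreducible over $\F$, let $V$ have dimension $2n$ with $n$ odd, choose any symplectic form $b$ on $V$, and set $u=0$. Then $u$ is trivially a $(p,p)$-difference in $\End(V)$: pick any $u_1\in\End(V)$ with $p(u_1)=0$ (possible since $\dim V=2n$ is even) and write $0=u_1-u_1$. But $(b,0)$ is \emph{not} a symplectic $(p,p)$-difference: a $b$-alternating endomorphism annihilated by the irreducible $p$ must have all invariant factors equal to $p$, and by Scharlau's classification there must be an even number of them, forcing $\dim V\equiv 0\pmod 4$, contrary to $n$ odd.

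Notice how this bypasses your reduction entirely. You proposed to show ``$v\oplus v$ a $(p,q)$-difference $\Rightarrow$ $v$ a $(p,q)$-difference'' and then invoke Lemma~\ref{lemma:fromA}. That implication does fail here (with $v=0$ on an odd-dimensional space), but more to the point the route through Lemma~\ref{lemma:fromA} is only a \emph{sufficient} path, not a necessary one: there exist $v$ that are not $(p,q)$-differences yet $S(v)$ is a symplectic $(p,q)$-difference (this is exactly the failure of Conjecture~\ref{conj:conj2}, disproved later via the Symplectic Duplication Lemma). So even had you pursued your reduction to the end, failure of that implication would not by itself have killed Conjecture~\ref{conj:conj1}; you still needed a concrete $(b,u)$ for which no symplectic decomposition exists. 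The paper's example supplies one with almost no work, by an obstruction at the level of mere existence of a $b$-alternating root of $p$.
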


\begin{conj}\label{conj:conj2}
Let $v$ be an endomorphism of a vector space $V$.
For $S(v)$ to be a symplectic $(p,q)$-difference, it is necessary and sufficient that $v$ be a $(p,q)$-difference in $\End(V)$.
\end{conj}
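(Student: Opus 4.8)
One half of the equivalence is already available: Lemma~\ref{lemma:fromA} shows that if $v$ is a $(p,q)$-difference in $\End(V)$ then $S(v)$ is a symplectic $(p,q)$-difference. So the whole content lies in the converse, and my plan is to peel off the symplectic structure and reduce it to a purely module-theoretic statement about matrices. Concretely, assume $S(v)=(S_V,h(v))$ is a symplectic $(p,q)$-difference, say $h(v)=w_1-w_2$ with $w_1,w_2$ being $S_V$-alternating endomorphisms of $V\times V^\star$ such that $p(w_1)=0=q(w_2)$. Dropping the requirement that the $w_i$ be $S_V$-alternating, this is nothing but a witness that $h(v)=v\oplus v^t$ is a $(p,q)$-difference in the full algebra $\End(V\times V^\star)$. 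Since every endomorphism is similar to its transpose, $h(v)$ is similar to $v\oplus v$, and being a $(p,q)$-difference is invariant under similarity; hence $v\oplus v$ is a $(p,q)$-difference in $\End(V^2)$. Thus Conjecture~\ref{conj:conj2} will follow from the following \emph{un-doubling principle}: for monic $p,q$ of degree $2$ and any endomorphism $v$ of a vector space $V$, if $v\oplus v$ is a $(p,q)$-difference in $\End(V^2)$, then $v$ is a $(p,q)$-difference in $\End(V)$ (the converse being obvious).

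To prove the un-doubling principle I would appeal to the complete description of $(p,q)$-differences in matrix algebras obtained in \cite{dSPregular,dSPsumexceptional}: it characterizes the endomorphisms $w$ that are $(p,q)$-differences by a finite list of numerical conditions bearing on the invariant factors of $w$ --- equivalently, on the multiplicities $m_k(\lambda)$ with which the elementary divisors $(t-\lambda)^k$ occur for $\lambda$ in $\overline{\F}$, supplemented, when $p$ or $q$ is irreducible over $\F$, by the analogous multiplicities attached to the relevant irreducible factors. The crucial observation is that every one of those conditions is \emph{homogeneous}: it is an equality or an inequality between integer-linear combinations of the quantities $m_k(\lambda)$ and of $\dim V$. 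Now the invariant factors of $v\oplus v$ are precisely those of $v$, each listed twice, so passing from $v$ to $v\oplus v$ multiplies every $m_k(\lambda)$ --- and $\dim V$ --- by $2$; a homogeneous condition is therefore satisfied by $v$ if and only if it is satisfied by $v\oplus v$. Feeding this into the classification yields the un-doubling principle, hence Conjecture~\ref{conj:conj2}.

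The step I expect to be the real obstacle is this last verification: one must run through the (notably case-heavy) characterization of \cite{dSPregular,dSPsumexceptional} and check that no clause is non-homogeneous --- the danger zones being the ``exceptional'' regimes, where $\delta=\tr p-\tr q$ forces two of the differences $\alpha-\beta$ ($\alpha\in\Root(p)$, $\beta\in\Root(q)$) to coincide, and the case $\charac(\F)=2$; a parity-type constraint lurking in any of these would break the argument. Should homogeneity genuinely fail somewhere, the fallback is to argue symplectically rather than combinatorially: exploit the structure of the $S_V$-alternating quadratic endomorphisms $w_1$ and $w_2$ together with Scharlau's classification (Theorem~\ref{theo:Scharlau}), via Proposition~\ref{prop:orthogonalsum}, to cut $(S_V,h(v))$ down to indecomposable symplectic pairs and settle those by hand. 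I expect, however, the homogeneity route to go through, in line with what was done for $(t^2,t^2)$ in \cite{dSPsquarezeroquadratic}, where the matrix criterion is simply ``$v$ similar to $-v$'' and is manifestly insensitive to uniform doubling of the invariant factors.
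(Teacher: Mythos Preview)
Your argument cannot succeed, because the conjecture is \emph{false}: the paper does not prove Conjecture~\ref{conj:conj2}, it disproves it (Section~\ref{section:regular}). The failure occurs exactly at your ``un-doubling principle'', and not in the exceptional regime you flagged but in the \emph{regular} case when both $p$ and $q$ are irreducible. Over $\F=\R$, take $p=q=t^2+1$ and let $v$ be the endomorphism $x\mapsto ix$ of the real vector space $\C$, which is cyclic with minimal polynomial $t^2+1$. The classification of regular $(p,q)$-differences in \cite{dSPregular} (recalled in Section~\ref{section:regularstatement}) shows that $v$ is \emph{not} a $(t^2+1,t^2+1)$-difference in $\End_\R(\C)$. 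Yet $v\oplus v$ has exactly two invariant factors, both equal to $t^2+1=r(t^2-\delta t)$ with $r(t)=t+1$ and $\delta=0$, and the ordinary Duplication Lemma (Lemma~\ref{lemma:simpleduplicationlemma}) shows that $v\oplus v$ \emph{is} a $(t^2+1,t^2+1)$-difference. So doubling can create a $(p,q)$-difference where there was none, and your homogeneity hypothesis on the characterization in \cite{dSPregular} is false.

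The same example kills the conjecture itself: by the Symplectic Duplication Lemma (Lemma~\ref{lemma:symplecticduplicationlemma}), $S(v)$ is a symplectic $(t^2+1,t^2+1)$-difference although $v$ is not a $(t^2+1,t^2+1)$-difference. The moral is that the regular characterization in \cite{dSPregular}, when both $p$ and $q$ are irreducible, involves conditions (tied to the structure of the quaternion-like algebra $\calW(p,q,x)$) that are genuinely parity-sensitive in the multiplicities, not merely homogeneous linear inequalities. Your instinct that a parity-type constraint ``would break the argument'' was correct; it simply lives in the regular case rather than the exceptional one. The paper's actual replacement for Conjecture~\ref{conj:conj2} is Theorem~\ref{theo:synthesis}: $S(v)$ is a symplectic $(p,q)$-difference if and only if $v$ splits as $v_1\oplus v_2$ where every invariant factor of $v_1$ is a polynomial in $t^2-\delta t$ and $v_2$ is a $(p,q)$-difference.
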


It turns out that both of these conjectures fail. The first one is the easier to dispel.
Assume indeed that $p$ is irreducible and $p=q$, take $V$ as a $2n$-dimensional space over $\F$ for some \emph{odd} integer $n>0$, and
choose a symplectic form $b$ on $V$.
Then $(b,0)$ is a symplectic pair, and $0$ is surely a $(p,q)$-difference in $\End(V)$ because we can choose $u_1 \in \End(V)$ such that $p(u_1)=0$, and then $0=u_1-u_1$.
However, no $b$-alternating endomorphism is annihilated by $p$ because, $p$ being irreducible,
the non-trivial invariant factors of such an endomorphism should all be equal to $p$;
as there must be an even number of such invariant factors, this would lead to $\dim V$ being a multiple of $4$, which is not true.
Thus $(b,0)$ is not a symplectic $(p,p)$-difference.

Conjecture \ref{conj:conj2} is harder to invalidate, and we will wait until Section \ref{section:regular} to give a counterexample.

\vskip 3mm
There are two viewpoints for characterizing symplectic $(p,q)$-differences:
the first one is by invariant factors and the second one is to focus on the \emph{indecomposable}
symplectic differences, which we define as follows.

\begin{Def}
Let $p,q$ be monic polynomials of degree $2$ in $\F[t]$.
A symplectic $(p,q)$-difference $(b,u)$ is called \textbf{indecomposable}
whenever it is non-trivial and not isomorphic to the orthogonal direct sum of two non-trivial
symplectic $(p,q)$-differences.
\end{Def}

In short, a symplectic $(p,q)$-difference $(b,u)$ is indecomposable when the underlying vector space
$V$ is non-trivial and admits no non-trivial splitting $V=V_1 \oplus V_2$ into $b$-orthogonal subspaces, both stable under $u$
and for which the induced pairs $(b,u)^{V_1}$ and
$(b,u)^{V_2}$ are symplectic $(p,q)$-differences.

Obviously, every symplectic $(p,q)$-difference is isomorphic to the orthogonal direct sum of
indecomposable ones. And by Proposition \ref{prop:orthogonalsum}, every symplectic pair that is isomorphic to the orthogonal
direct sum of symplectic $(p,q)$-differences is a symplectic $(p,q)$-difference.
Hence, classifying the symplectic $(p,q)$-differences up to isometry amounts to describing the
indecomposable symplectic $(p,q)$-differences up to isometry.

\subsection{The regular-exceptional dichotomy}

In the study of $(p,q)$-differences, a major role is played by the elements of $\Root(p)-\Root(q)$.

Let $u$ be an endomorphism of a finite-dimensional vector space $V$ over $\F$.
Let us write
$$p(t)=t^2-\lambda t+\alpha \quad \text{and} \quad q(t)=t^2-\mu t+\beta.$$
Remember that $\delta=\tr(p)-\tr(q)$.
The \textbf{fundamental polynomial} of the pair $(p,q)$ is defined as the resultant
$$F_{p,q}(t):=\res_{\F[t]}(p(x),q(x-t)) \in \F[t],$$
which is a polynomial of degree $4$.
More explicitly, if we split $p(z)=(z-x_1)(z-x_2)$ and $q(z)=(z-y_1)(z-y_2)$ in $\overline{\F}[z]$,
then
$$F_{p,q}(t)=\prod_{1 \leq i,j \leq 2}\bigl(t-(x_i-y_j)\bigr)=p(t+y_1)\,p(t+y_2)=q(x_1-t)\,q(x_2-t).$$
Now, let $u$ be an endomorphism of a (finite-dimensional) vector space $V$.
We set
$$E_{p,q}(u):=\underset{n \in \N}{\bigcup} \Ker F_{p,q}(u)^n \quad \text{and} \quad R_{p,q}(u):=\underset{n \in \N}{\bigcap} \im F_{p,q}(u)^n.$$
Hence, $V=E_{p,q}(u) \oplus R_{p,q}(u)$, and the endomorphism $u$ stabilizes both linear subspaces
$E_{p,q}(u)$ and $R_{p,q}(u)$ (this is the Fitting decomposition of $F_{p,q}(u)$).

The endomorphism $u$ is called $(p,q)$-\textbf{exceptional} (respectively, $(p,q)$-\textbf{regular})
whenever $E_{p,q}(u)=V$ (respectively, $R_{p,q}(u)=V$).
In other words, $u$ is $(p,q)$-exceptional (respectively, $(p,q)$-regular)
if and only all the eigenvalues of $u$ in $\overline{\F}$ belong to $\Root(p)-\Root(q)$
(respectively, no eigenvalue of $u$ in $\overline{\F}$ belongs to $\Root(p)-\Root(q)$).

The endomorphism of $E_{p,q}(u)$ (respectively, of $R_{p,q}(u)$)
induced by $u$ is always $(p,q)$-exceptional (respectively, always $(p,q)$-regular)
and we call it the $(p,q)$-exceptional part (respectively, the $(p,q)$-regular part) of $u$.

Since $u$ and $h(u)$ have the same spectrum, we note that $u$ is $(p,q)$-exceptional (respectively, $(p,q)$-regular)
if and only if $h(u)$ is $(p,q)$-exceptional (respectively, $(p,q)$-regular).

Now, if we have a symplectic form $b$ for which $u$ is $b$-alternating, then $u$ is $b$-selfadjoint and it follows that
$E_{p,q}(u)$ and $R_{p,q}(u)$
are $b$-orthogonal, yielding induced symplectic pairs $(b,u)^{E_{p,q}(u)}$ and $(b,u)^{R_{p,q}(u)}$, so that
$$(b,u) \simeq (b,u)^{E_{p,q}(u)} \bot (b,u)^{R_{p,q}(u)}.$$
We say that $(b,u)^{R_{p,q}(u)}$ is the \textbf{$(p,q)$-regular part} of $(b,u)$, and that
 $(b,u)^{E_{p,q}(u)}$ is the \textbf{$(p,q)$-exceptional part} of $(b,u)$.

Before we go on, we need a classical lemma on quadratic elements. Its proof is very simple, so we recall it.

\begin{lemma}[Commutation lemma]\label{lemma:commute}
Let $u_1,u_2$ be elements of an $\F$-algebra $\mathcal{A}$ such that $p(u_1)=0$ and $q(u_2)=0$.
Set $u:=u_1-u_2$. Then $u_1$ and $u_2$ commute with $u^2-\delta u$.
\end{lemma}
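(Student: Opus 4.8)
The plan is to reduce everything to a one-line commutator identity. Write $p(t)=t^2-\lambda t+\alpha$ and $q(t)=t^2-\mu t+\beta$, so that $\delta=\lambda-\mu$ and the hypotheses become the quadratic relations $u_1^2=\lambda u_1-\alpha\,1_{\mathcal A}$ and $u_2^2=\mu u_2-\beta\,1_{\mathcal A}$. Introduce $c:=u_2u_1-u_1u_2$. First I would record the purely formal facts (no quadratic relation needed) that $[u_1,u]=[u_1,u_1-u_2]=-[u_1,u_2]=c$ and $[u_2,u]=[u_2,u_1-u_2]=[u_2,u_1]=c$: thus $u_1$ and $u_2$ have one and the same commutator $c$ with $u$.

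Next, since $u^2-\delta u$ is a polynomial in $u$ it commutes with $u$, so, as $u_2=u_1-u$, it will suffice to prove that $u_1$ commutes with $u^2-\delta u$. The Leibniz rule for commutators gives $[u_1,u^2-\delta u]=[u_1,u]\,u+u\,[u_1,u]-\delta\,[u_1,u]=cu+uc-\delta c$, so the whole lemma boils down to the single identity $cu+uc=\delta c$ in $\mathcal A$. This is the step where the quadratic relations are genuinely used: expanding $cu=(u_2u_1-u_1u_2)(u_1-u_2)$ and $uc=(u_1-u_2)(u_2u_1-u_1u_2)$ and then substituting $u_1^2=\lambda u_1-\alpha\,1_{\mathcal A}$ and $u_2^2=\mu u_2-\beta\,1_{\mathcal A}$, the cubic words $u_1u_2u_1$ and $u_2u_1u_2$ and all the scalar contributions cancel, leaving exactly $cu+uc=(\lambda-\mu)(u_2u_1-u_1u_2)=\delta c$.

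I do not expect a real obstacle here; the statement is essentially a normalization computation, and the only thing to watch is the bookkeeping in the last expansion (and the sign convention $[a,b]=ab-ba$). It is worth noting that this computation also explains the definition of $\delta$: the cross terms $\lambda u_1u_2$ and $\mu u_2u_1$ cancel against their counterparts only if one subtracts precisely $\delta u=(\lambda-\mu)u$, so $\delta=\tr p-\tr q$ is the one correction for which the lemma can hold. As an alternative route that avoids commutators, one may simply expand $u^2-\delta u=\mu u_1+\lambda u_2-u_1u_2-u_2u_1-(\alpha+\beta)\,1_{\mathcal A}$ and check directly that the non-scalar part $w:=\mu u_1+\lambda u_2-u_1u_2-u_2u_1$ satisfies $u_1w=wu_1$ by applying $u_1^2=\lambda u_1-\alpha\,1_{\mathcal A}$ on each side; since $w$ is symmetric under swapping $1\leftrightarrow 2$, $\lambda\leftrightarrow\mu$, $\alpha\leftrightarrow\beta$ (an operation that turns the hypothesis into itself), the same computation settles commutation with $u_2$.
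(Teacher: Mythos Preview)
Your proof is correct. Both routes you sketch work, and in fact your ``alternative route'' is essentially the paper's own argument: the paper introduces the pseudo-conjugates $u_i^\star=(\tr\,\cdot)\,1_{\mathcal A}-u_i$, computes the pseudo-norm $-(u^2-\delta u)=uu^\star$, and lands on the same non-scalar part $x=\mu u_1+\lambda u_2-u_1u_2-u_2u_1$ (written there as $u_1u_2^\star+u_2u_1^\star=u_1^\star u_2+u_2^\star u_1$), then checks $u_1x=xu_1$ directly and invokes the $1\leftrightarrow 2$ symmetry---exactly your alternative computation, just packaged in conjugate/norm language.

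Your primary route via commutators is a genuinely different organization: reducing everything to the single identity $cu+uc=\delta c$ for $c=[u_2,u_1]$ is clean and makes transparent why the correction term must be $\delta u$. The paper's pseudo-conjugate framing buys a bit of conceptual context (it interprets $u^2-\delta u$ as a norm-like quantity, which resonates with later uses of $\Lambda_{p,q}$ and $F_{p,q}$), whereas your commutator reduction is more self-contained and arguably quicker to verify. Either way the underlying cancellation is identical.
\endinput
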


\begin{proof}
Write $u_1^\star:=(\tr p)1_\calA-u_1$ and $u_2^\star:=(\tr q)1_\calA-u_2$.
It is natural to set $u^\star:=u_1^\star-u_2^\star=\delta\,1_{\mathcal{A}}-u$ as the pseudo-conjugate of $u$.
The pseudo-norm of $u$ is then
\begin{align*}
-(u^2-\delta u) = uu^\star & =u_1 u_1^\star-u_1 u_2^\star-u_2 u_1^\star+u_2 u_2^\star  \\
& = -(u_1 u_2^\star+u_2 u_1^\star)+(p(0)+q(0)).1_{\mathcal{A}}.
\end{align*}
Hence, it will suffice to check that the element $x:=u_1 u_2^\star+u_2 u_1^\star$ commutes with both $u_1$ and $u_2$.
First, note that
$$x=(\tr q) u_1+(\tr p) u_2-u_1u_2-u_2u_1=u_1^\star u_2+u_2^\star u_1,$$
and then
compute
$$u_1 x=u_1(u_1^\star u_2+u_2^\star u_1)=p(0)\,u_2+u_1 u_2^\star u_1=x u_1.$$
Symmetrically, one finds that $u_2$ commutes with $x$, which yields the desired result.
\end{proof}

Finally, we set
$$\Lambda_{p,q}:=t^2+\bigl(2\bigl(p(0)+q(0)\bigr)-(\tr p)(\tr q)\bigr) t+F_{p,q}(0),$$
which has its coefficients in $\F$.
Noting that
$$\bigl(t-(x_1-y_1)\id_V\bigr)\bigl(t-(x_2-y_2)\id_V\bigr)=t^2-\delta t+(x_1-y_1)(x_2-y_2)\,\id_V$$
and likewise
$$\bigl(t-(x_1-y_2)\id_V\bigr)\bigl(t-(x_2-y_1)\id_V\bigr)=t^2-\delta t+(x_1-y_2)(x_2-y_1)\,\id_{V},$$
we gather that
$$F_{p,q}(t)=\Lambda_{p,q}(t^2-\delta t).$$
Therefore, if an element $x$ of an $\F$-algebra $\mathcal{A}$ splits into $x=u_1-u_2$ where $p(u_1)=q(u_2)=0$,
then $u_1$ and $u_2$ commute with $F_{p,q}(x)$ by Lemma \ref{lemma:commute}.

As a consequence, we will obtain the following result.

\begin{prop}
Let $(b,u)$ be a symplectic pair.
Then $(b,u)$ is a symplectic $(p,q)$-difference if and only if
both its $(p,q)$-regular and $(p,q)$-exceptional parts are symplectic $(p,q)$-differences.
\end{prop}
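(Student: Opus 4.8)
The plan is to prove the two implications separately, the direct one being essentially formal. Suppose both the $(p,q)$-regular part $(b,u)^{R_{p,q}(u)}$ and the $(p,q)$-exceptional part $(b,u)^{E_{p,q}(u)}$ are symplectic $(p,q)$-differences. Since $V=E_{p,q}(u)\oplus R_{p,q}(u)$ with the two summands mutually $b$-orthogonal and stable under $u$, we have the isometry $(b,u)\simeq (b,u)^{E_{p,q}(u)}\bot (b,u)^{R_{p,q}(u)}$, and Proposition \ref{prop:orthogonalsum}, together with the invariance of the property ``being a symplectic $(p,q)$-difference'' under isometry, immediately yields that $(b,u)$ is a symplectic $(p,q)$-difference.

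For the converse, assume $(b,u)$ is a symplectic $(p,q)$-difference and write $u=u_1-u_2$ with $u_1,u_2$ $b$-alternating, $p(u_1)=0$ and $q(u_2)=0$. As already observed just above the statement (apply Lemma \ref{lemma:commute} to $u=u_1-u_2$, and use the identity $F_{p,q}(t)=\Lambda_{p,q}(t^2-\delta t)$), both $u_1$ and $u_2$ commute with $F_{p,q}(u)$, hence with every power $F_{p,q}(u)^n$. Consequently $u_1$ and $u_2$ stabilize $\Ker F_{p,q}(u)^n$ and $\im F_{p,q}(u)^n$ for all $n$, and therefore they stabilize $E_{p,q}(u)=\bigcup_n \Ker F_{p,q}(u)^n$ and $R_{p,q}(u)=\bigcap_n \im F_{p,q}(u)^n$. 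We then invoke the basic stability lemma recalled earlier --- if a subspace $U$ of the underlying space is stable under $u_1$ and under $u_2$, then $(b,u)^U$ is a symplectic $(p,q)$-difference --- first with $U=E_{p,q}(u)$ and then with $U=R_{p,q}(u)$. Since $E_{p,q}(u)$ and $R_{p,q}(u)$ are $b$-orthogonal complements of one another, each is $b$-nondegenerate, so that the induced pairs $(b,u)^{E_{p,q}(u)}$ and $(b,u)^{R_{p,q}(u)}$ require no quotienting and coincide with the $(p,q)$-exceptional and $(p,q)$-regular parts of $(b,u)$; hence both of these parts are symplectic $(p,q)$-differences.

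I do not expect a genuine obstacle here: the whole argument rests on the two facts developed in this subsection, namely the factorization $F_{p,q}(t)=\Lambda_{p,q}(t^2-\delta t)$, which funnels the commutation statement of Lemma \ref{lemma:commute} into commutation with $F_{p,q}(u)$ and hence with all its powers, and the basic stability lemma. The one point to check with a little care is that the pieces of the Fitting decomposition of $F_{p,q}(u)$ are stable not merely under $u$ but under each of $u_1$ and $u_2$ separately; this is exactly what commutation with every $F_{p,q}(u)^n$ delivers.
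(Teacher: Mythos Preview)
Your proof is correct and follows essentially the same approach as the paper's: the ``easy'' direction uses the orthogonal decomposition $(b,u)\simeq (b,u)^{E_{p,q}(u)}\bot (b,u)^{R_{p,q}(u)}$ together with Proposition~\ref{prop:orthogonalsum}, while the other direction uses the commutation of $u_1,u_2$ with $F_{p,q}(u)$ (via Lemma~\ref{lemma:commute} and the factorization $F_{p,q}(t)=\Lambda_{p,q}(t^2-\delta t)$) to get stability of $E_{p,q}(u)$ and $R_{p,q}(u)$, then the basic induction lemma. Your version is actually slightly more explicit than the paper's in spelling out why commutation with $F_{p,q}(u)$ passes to its powers and in noting that the Fitting pieces are $b$-regular so no quotienting is needed; the only quibble is that your labeling of ``direct'' and ``converse'' is swapped relative to the paper's conventions, which is harmless.
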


\begin{proof}
Denoting by $(b,u)_r$ the $(p,q)$-regular part of $(b,u)$ and by $(b,u)_e$ its $(p,q)$-exceptional part,
we know that $(b,u) \simeq (b,u)_r \bot (b,u)_e$, which yields the converse implication.

For the direct implication, assume that $u=u_1-u_2$ for some $b$-alternating endomorphisms $u_1$ and $u_2$.
As we have just seen, $u_1$ and $u_2$ commute with $F_{p,q}(u)$ and hence they stabilize the linear subspaces $E_{p,q}(u)$ and $R_{p,q}(u)$.
It follows in particular that the induced pairs $(b,u)^{E_{p,q}(u)}$ and $(b,u)^{R_{p,q}(u)}$ are symplectic $(p,q)$-differences.
\end{proof}

It follows that each indecomposable symplectic $(p,q)$-difference is either
regular or exceptional.

\subsection{Main results}\label{section:results}

Before we can state the complete classification of symplectic $(p,q)$-differences, we need
a notation for companion matrices.

\begin{Not}
Given a monic polynomial $r(t)=t^n-\underset{k=0}{\overset{n-1}{\sum}} a_k t^k$ of $\F[t]$,
we denote by
$$C(r):=\begin{bmatrix}
0 &   & & (0) & a_0 \\
1 & 0 & &   & a_1 \\
0 & \ddots & \ddots & & \vdots \\
\vdots & \ddots & \ddots & 0 & a_{n-2} \\
(0) & \cdots & 0 &  1 & a_{n-1}
\end{bmatrix}\in \Mat_n(\F)$$
its \textbf{companion matrix}.
\end{Not}

We are ready to state the classification of indecomposable symplectic $(p,q)$-differences.
As we have just seen, every such pair is either regular or indecomposable.
Moreover, we can use Theorem \ref{theo:Scharlau} to deal with pairs of the form $S(v)$ only.

In each one of the following tables, we give a set of matrices. Each matrix represents an endomorphism
$v$ such that $S(v)$ is an indecomposable symplectic $(p,q)$-difference,
and every indecomposable symplectic $(p,q)$-difference is isometric to $S(v)$ for some $v$ that is represented by one of the matrices.

We start with the regular case, for which the characterization is very simple and requires no discussion (Table \ref{table:1}).

\begin{table}[H]
\caption{The classification of indecomposable regular symplectic $(p,q)$-differences.}
\label{table:1}
\begin{center}
\begin{tabular}{| c | c |}
\hline
Representing matrix & Associated data  \\
\hline
\hline
 & $n\in \N^*$,   \\
$C\bigl(r^n(t^2-\delta t)\bigr)$ & $r \in \F[t]$ irreducible and monic, \\
 & $r(t^2-\delta t)$ has no root in $\Root(p)-\Root(q)$ \\
\hline
\end{tabular}
\end{center}
\end{table}

Next, we tackle the indecomposable exceptional symplectic $(p,q)$-differences.
We start with the case where both $p$ and $q$ are split over $\F$.
The three situations are described in Tables \ref{table:2} to \ref{table:4}.

\begin{table}[H]
\begin{center}
\caption{The classification of indecomposable exceptional symplectic $(p,q)$-differences: When both $p$ and $q$ are split with a double root.}
\label{table:2}
\begin{tabular}{| c | c |}
\hline
Representing matrix & Associated data  \\
\hline
\hline
$C\bigl((t-x)^n\bigr)$ & $n \in \N^*$, $x\in \Root(p)-\Root(q)$ \\
\hline
\end{tabular}
\end{center}
\end{table}

\begin{table}[H]
\begin{center}
\caption{The classification of indecomposable exceptional symplectic $(p,q)$-differences: When both $p$ and $q$ are split with simple roots.}
\label{table:3}
\begin{tabular}{| c | c |}
\hline
Representing matrix & Associated data  \\
\hline
\hline
 & $n \in \N^*$, \\
$C\bigl((t-x)^n\bigr) \oplus C\bigl((t-\delta+x)^n\bigr)$ & $x \in \Root(p)-\Root(q)$  \\
& such that $x \neq \delta -x$ \\
\hline
 & $n \in \N$, \\
$C\bigl((t-x)^{n+1}\bigr) \oplus C\bigl((t-\delta+x)^{n}\bigr)$ & $x \in \Root(p)-\Root(q)$  \\
& such that $x \neq \delta -x$ \\
\hline
 & $n\in \N^*$,  \\
$C\bigl((t-x)^{n}\bigr)$ & $x \in \Root(p)-\Root(q)$ \\
& such that $x =\delta-x$ \\
\hline
\end{tabular}
\end{center}
\end{table}

\begin{table}[H]
\begin{center}
\caption{The classification of indecomposable exceptional symplectic $(p,q)$-differences: When one of $p$ and $q$ is split with a double root
 and the other one is split with simple roots.}
\label{table:4}
\begin{tabular}{| c | c |}
\hline
Representing matrix & Associated data  \\
\hline
\hline
$C\bigl((t-x)^n\bigr) \oplus C\bigl((t-\delta+x)^n\bigr)$ & $n\in \N^*$, $x \in \Root(p)-\Root(q)$  \\
\hline
$C\bigl((t-x)^{n+1}\bigr) \oplus C\bigl((t-\delta+x)^n\bigr)$ & $n\in \N$, $x \in \Root(p)-\Root(q)$  \\
\hline
$C\bigl((t-x)^{n+2}\bigr) \oplus C\bigl((t-\delta+x)^n\bigr)$ & $n\in \N$, $x \in \Root(p)-\Root(q)$  \\
 \hline
\end{tabular}
 \end{center}
\end{table}

Next, the case where $p$ is irreducible but $q$ is split (the case where $p$ is split and $q$ is irreducible is deduced from
it by noting that symplectic $(p,q)$-differences are also symplectic $(q(-t),p(-t))$-differences).
There are two subcases to consider, whether the two polynomials obtained by translating
$p$ along the roots of $q$ are equal or not (Tables \ref{table:5} and \ref{table:6}, respectively).

\begin{table}[H]
\begin{center}
\caption{The classification of indecomposable exceptional symplectic $(p,q)$-differences: When $p$ is irreducible, $q=(t-y_1)(t-y_2)$
for some $y_1,y_2$ in $\F$, and $p(t+y_1)=p(t+y_2)$.}
\label{table:5}
\begin{tabular}{| c | c |}
\hline
Representing matrix & Associated data  \\
\hline
\hline
$C\bigl(p(t+y)^n\bigr)$ &  $n\in \N^*$, $y \in \Root(q)$  \\
\hline
\end{tabular}
 \end{center}
\end{table}

\begin{table}[H]
\begin{center}
\caption{The classification of indecomposable exceptional symplectic $(p,q)$-differences: When $p$ is irreducible, $q=(t-y_1)(t-y_2)$
for some $y_1,y_2$ in $\F$, and $p(t+y_1)\neq p(t+y_2)$.}
\label{table:6}
\begin{tabular}{| c | c |}
\hline
Representing matrix & Associated data  \\
\hline
\hline
$C\bigl(p(t+y_1)^n\bigr) \oplus C\bigl(p(t+y_2)^n\bigr)$ & $n\in \N^*$ \\
\hline
$C\bigl(p(t+y_1)^{n+1}\bigr) \oplus C\bigl(p(t+y_2)^{n}\bigr)$ & $n\in \N$ \\
\hline
$C\bigl(p(t+y_2)^{n+1}\bigr) \oplus C\bigl(p(t+y_1)^{n}\bigr)$ & $n\in \N$ \\
\hline
\end{tabular}
 \end{center}
\end{table}

Next, we consider the situation where both $p$ and $q$ are irreducible in $\F[t]$, with the same
splitting field in $\overline{\F}$ (Table \ref{table:7}).

\begin{table}[H]
\begin{center}
\caption{The classification of indecomposable exceptional symplectic $(p,q)$-differences: When $p$ and $q$ are irreducible with the same splitting field $\L$.}
\label{table:7}
\begin{tabular}{| c | c |}
\hline
Representing matrix & Associated data  \\
\hline
\hline
&  $n\in \N^*$,  \\
$C\Bigl(\bigl(t^2-\delta t+N_{\L/\F}(x-y)\bigr)^{n}\Bigr)$ &  $x \in \Root(p)$, $y \in \Root(q)$ \\
 & with $x-y \not\in \F$  \\
\hline
$C\bigl((t-(x-y))^n\bigr) \oplus C\bigl((t-(x-y))^n\bigr)$ & $n \in \N^*$ odd, $x \in \Root(p)$, \\
& $y \in \Root(q)$ with $x-y \in \F$ \\
\hline
\hline
$C\bigl((t-(x-y))^n\bigr)$ & $n \in \N^*$ even, $x \in \Root(p)$, \\
& $y \in \Root(q)$ with $x-y \in \F$ \\
\hline
\end{tabular}
 \end{center}
\end{table}

We finish with the situation where both $p$ and $q$ are irreducible, with distinct splitting fields in $\overline{\F}$.
There are three subcases to consider.

The first one (which always holds if $\charac(\F) \neq 2$) is the one where
$p$ and $q$ do not have the same discriminant (see Table \ref{table:8}).

\begin{table}[H]
\begin{center}
\caption{The classification of indecomposable exceptional symplectic $(p,q)$-differences: When $p$ and $q$ are irreducible with distinct splitting fields and distinct discriminants.}
\label{table:8}
\begin{tabular}{| c | c |}
\hline
Representing matrix & Associated data  \\
\hline
\hline
$C(F_{p,q}^n)$ & $n \in \N^*$ \\
\hline
\end{tabular}
 \end{center}
\end{table}

The second one occurs when both $p$ and $q$ have $0$ as their discriminant (i.e.\ they are both inseparable). See Table \ref{table:9}.

\begin{table}[H]
\begin{center}
\caption{The classification of indecomposable exceptional symplectic $(p,q)$-differences: When $p$ and $q$ are irreducible with distinct splitting fields, both with a double root.}
\label{table:9}
\begin{tabular}{| c | c |}
\hline
Representing matrix & Associated data  \\
\hline
\hline
$C\Bigl(\bigl(t^2-p(0)-q(0)\bigr)^n\Bigr)$ & $n \in \N^*$ \\
\hline
\end{tabular}
 \end{center}
\end{table}

The last case occurs when $p$ and $q$ have the same discriminant, which is non-zero
(still assuming that $p$ and $q$ have distinct splitting fields in $\overline{\F}$: the combination of all those conditions can occur only if $\charac(\F) = 2$). See Table \ref{table:10}.

\begin{table}[H]
\caption{The classification of indecomposable exceptional symplectic $(p,q)$-differences: When $p$ and $q$ are irreducible with distinct splitting fields and the same discriminant, which is nonzero.}
\begin{center}
\label{table:10}
\begin{tabular}{| c | c |}
\hline
Representing matrix & Associated data  \\
\hline
\hline
$C\Bigl(\bigl(t^2-(\tr p)\,t +p(0)+q(0)\bigr)^n\Bigr)$ & \\
$\oplus$ & $n \in \N^*$ odd \\
$C\Bigl(\bigl(t^2-(\tr p)\,t +p(0)+q(0)\bigr)^n\Bigr)$ &  \\
\hline
$C\Bigl(\bigl(t^2-(\tr p)\,t +p(0)+q(0)\bigr)^n\Bigr)$ & $n \in \N^*$ even \\
\hline
\end{tabular}
 \end{center}
\end{table}

\subsection{Strategy, and structure of the article}

We will use two main techniques to construct symplectic $(p,q)$-differences.
The first one consists in using symplectic extensions:
one takes a $(p,q)$-difference $v$ in an algebra of endomorphisms, and then one immediately obtains that
$S(v)$ is a symplectic $(p,q)$-difference.

The second construction echoes a result from \cite{dSPregular} called the ``Duplication Lemma" (lemma 3.8 there). Given a monic $r \in \F[t]$,
we will construct a symplectic $(p,q)$-difference $(b,u)$ such that $u$ has exactly two invariant factors, both equal to $r(t^2-\delta t)$.

It will turn out (but this requires a case-by-case verification that we will \emph{not} perform) that the following characterization of symplectic $(p,q)$-differences holds true:

\begin{theo}\label{theo:synthesis}
Let $v$ be an endomorphism of a vector space.
The following properties are equivalent:
\begin{enumerate}[(i)]
\item $S(v)$ is a symplectic $(p,q)$-difference.
\item There exist endomorphisms $v_1$ and $v_2$ such that $v \simeq v_1 \oplus v_2$, all the invariant factors of $v_1$ are polynomials in $t^2-\delta t$,
and the endomorphism $v_2$ is a $(p,q)$-difference.
\end{enumerate}
\end{theo}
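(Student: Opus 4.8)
The plan is to prove the two implications by different routes: (ii)$\Rightarrow$(i) directly from the tools already assembled, and (i)$\Rightarrow$(ii) by appealing to the full classification of indecomposable symplectic $(p,q)$-differences in Tables \ref{table:1} to \ref{table:10} together with the reduction to indecomposables. For (ii)$\Rightarrow$(i), write $v \simeq v_1 \oplus v_2$ as in (ii), so that $S(v) \simeq S(v_1) \bot S(v_2)$; by Proposition \ref{prop:orthogonalsum} it suffices to treat $S(v_1)$ and $S(v_2)$ separately. For $S(v_2)$ this is exactly Lemma \ref{lemma:fromA}. For $S(v_1)$, let $p_1,\dots,p_m$ be the invariant factors of $v_1$; by hypothesis $p_i = \rho_i(t^2-\delta t)$ for monic $\rho_i$, and since $\F[t]$ is free of rank $2$ over $\F[t^2-\delta t]$ (basis $1,t$), the relations $p_{i+1}\mid p_i$ in $\F[t]$ force $\rho_{i+1}\mid\rho_i$ in $\F[t^2-\delta t]$. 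For each $i$, the duplication construction applied to $r:=\rho_i$ yields a symplectic $(p,q)$-difference whose underlying endomorphism has exactly two invariant factors, both equal to $\rho_i(t^2-\delta t)$; taking the orthogonal sum over $i$ (Proposition \ref{prop:orthogonalsum}) gives a symplectic $(p,q)$-difference whose underlying endomorphism has invariant factors $p_1,p_1,\dots,p_m,p_m$, i.e.\ those of $h(v_1)=v_1\oplus v_1^t$. By Theorem \ref{theo:Scharlau}(c) that sum is isometric to $S(v_1)$, which is therefore a symplectic $(p,q)$-difference.

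For (i)$\Rightarrow$(ii), assume $S(v)$ is a symplectic $(p,q)$-difference. Decompose it into an orthogonal direct sum of indecomposable symplectic $(p,q)$-differences and, using Theorem \ref{theo:Scharlau}(a) and the classification, realize each summand as $S(\nu_k)$ for some representing matrix $\nu_k$ listed in Tables \ref{table:1} to \ref{table:10}. Then $S(v)\simeq S\bigl(\bigoplus_k\nu_k\bigr)$, hence $v\simeq\bigoplus_k\nu_k$ by Theorem \ref{theo:Scharlau}(b). Now split the index set into $K_1$ (the $k$ for which all invariant factors of $\nu_k$ lie in $\F[t^2-\delta t]$) and $K_2$ (the rest), and put $v_1:=\bigoplus_{k\in K_1}\nu_k$, $v_2:=\bigoplus_{k\in K_2}\nu_k$, so $v\simeq v_1\oplus v_2$. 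Using once more the freeness of $\F[t]$ over $\F[t^2-\delta t]$, one checks that the class of endomorphisms all of whose invariant factors lie in $\F[t^2-\delta t]$ is stable under direct sums (the invariant factors of $\bigoplus_{k\in K_1}\nu_k$ can be computed inside $\F[t^2-\delta t]$), so $v_1$ meets the requirement of (ii). Since $(p,q)$-differences in an endomorphism algebra are obviously stable under direct sums, it remains only to verify that each $\nu_k$ with $k\in K_2$ is a $(p,q)$-difference in its matrix algebra.

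The main obstacle is exactly this last verification: one must run through every entry of Tables \ref{table:1} to \ref{table:10} and confirm that every representing matrix that is not a companion matrix of a polynomial in $t^2-\delta t$ is a $(p,q)$-difference in the ambient matrix algebra — this is the case-by-case check the paper deliberately declines to perform, and it would be carried out by matching each such matrix against the known list of indecomposable matrix $(p,q)$-differences from \cite{dSPregular,dSPsumexceptional}. A convenient bookkeeping device is the observation that, whenever $\charac(\F)\neq 2$ or $\delta\neq 0$, the condition ``all invariant factors of $v_1$ lie in $\F[t^2-\delta t]$'' is equivalent to ``$v_1$ is similar to $\delta\,\id-v_1$'' (since a monic $g$ satisfies $g(t)=g(\delta-t)$ exactly when $g\in\F[t^2-\delta t]$, the fixed ring of the involution $t\mapsto\delta-t$), which renders both the stability under $\oplus$ and the sorting of the table entries into $K_1$ and $K_2$ essentially mechanical; in the residual case $\charac(\F)=2,\ \delta=0$ one uses instead the criterion that every invariant factor of $v_1$ has zero derivative.
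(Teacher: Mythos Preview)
The paper does not actually prove Theorem \ref{theo:synthesis}: it states the result and immediately adds that ``this requires a case-by-case verification that we will \emph{not} perform.'' Your proposal is thus not competing against a written proof, and it is exactly the argument the paper's structure implies: (ii)$\Rightarrow$(i) via Lemma \ref{lemma:fromA} together with the Symplectic Duplication Lemma (this is verbatim the method used for Theorem \ref{theo:regular}), and (i)$\Rightarrow$(ii) by decomposing into indecomposables from Tables \ref{table:1}--\ref{table:10}, sorting, and checking that each summand whose invariant factors are not all in $\F[t^2-\delta t]$ appears among the indecomposable matrix $(p,q)$-differences of \cite{dSPregular,dSPsumexceptional}. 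You correctly flag that last check as the outstanding work; it is precisely the step the paper declines to carry out.

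One genuine slip: your ``bookkeeping device'' is false as stated. The condition that every invariant factor of $v_1$ lies in $\F[t^2-\delta t]$ is \emph{strictly stronger} than $v_1 \simeq \delta\,\id - v_1$ when $\charac(\F)\neq 2$. Take $v_1=(\delta/2)\,\id$ on a $1$-dimensional space: then $\delta\,\id - v_1=v_1$, yet the invariant factor $t-\delta/2$ has odd degree and is not in $\F[t^2-\delta t]$. The point is that the monic invariant factors of $\delta\,\id - v_1$ are $(-1)^{\deg g_i}g_i(\delta-t)$, not $g_i(\delta-t)$, so an odd-degree $g_i$ with $g_i(t)=-g_i(\delta-t)$ also gives similarity. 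This does not damage your main argument, since you already justify closure under $\oplus$ independently (and it is easily seen via the primary decomposition, pairing each irreducible $\pi$ with its image under $t\mapsto\delta-t$); but the shortcut should be dropped or repaired.
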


It turns out however that proving this result requires close to a complete understanding of the
classification of $(p,q)$-differences in algebras of endomorphisms, featured in
\cite{dSPregular} and proved in \cite{dSPregular,dSPsumexceptional}. So far, we have refrained from stating this classification, and for two reasons: firstly, there are lots of cases to consider for exceptional $(p,q)$-differences; secondly, a good deal of this classification turns out to be of no use
for symplectic $(p,q)$-differences, in particular the classification of regular $(p,q)$-differences (which is far more subtle for endomorphisms than for symplectic pairs), as well as the classification of exceptional $(p,q)$-differences when both $p$ and $q$ are irreducible.
Thus, we will only state the results from \cite{dSPregular} and \cite{dSPsumexceptional} when they are useful.

The remainder of the article is laid out as follows.
In Section \ref{section:duplicationlemma}, we prove the Symplectic Duplication Lemma. As a consequence
of it and of the fact that every regular $(p,q)$-endomorphism has all its invariant factors being polynomials in $t^2-\delta t$
(see section 3 of \cite{dSPregular}),  in Section \ref{section:regular} we obtain a very simple characterization of symplectic pairs that are regular $(p,q)$-differences.

The next three sections are devoted to the classification of exceptional symplectic $(p,q)$-differences.
In Section \ref{section:porqsplit}, we obtain such a classification when one of $p$ and $q$ split over $\F$, as a mostly straightforward consequence
of the classification of $(p,q)$-differences in algebra of endomorphisms (the classification is recalled there).

The case where $p$ and $q$ are irreducible, which is more subtle, is dealt with in two separate sections. In Section \ref{section:pandqirrsamesplittingfield}, we deal with the case where $p$ and $q$ are irreducible with the same splitting field:
here, the main difficulty appears when $q$ is a translation of $p$, and it requires an approach that is quite specific to symplectic pairs.
The last case, where $p$ and $q$ are irreducible with distinct splitting fields in $\overline{\F}$, is dealt with in Section \ref{section:pandqirrdiffsplittingfield},
and the results of the former case are used to deal with a very specific situation for fields with characteristic $2$.

As a final remark, we note that in Section \ref{section:results} we have framed our results in terms of indecomposable symplectic pairs.
However, in the remainder of the article we will also give direct characterizations of symplectic $(p,q)$-differences
in terms of invariant factors (where we still differentiate
between regular and exceptional pairs): see Theorems \ref{theo:regular}, \ref{theo:pandqsplitdouble}, \ref{theo:pandqsplitsimple}, \ref{theo:pandqsplitpsimpleqdouble}, \ref{theo:pirrqsplitdouble}, \ref{theo:pirrqsplitsimple},
\ref{theo:pandqirrsamesplittingfield}, \ref{theo:pandqirrdistinctfieldsnonspecial} and \ref{theo:pandqirrdistinctfieldsspecial}.

\section{Constructing symplectic $(p,q)$-differences: the duplication lemma}\label{section:duplicationlemma}

\subsection{Statement of the Symplectic Duplication Lemma}

In \cite{dSPregular} and \cite{dSPsumexceptional}, one of the main tools
for constructing $(p,q)$-differences was the following lemma:

\begin{lemma}[Duplication Lemma, lemma 3.8 in \cite{dSPregular}]\label{lemma:simpleduplicationlemma}
Let $r$ be a nonconstant monic polynomial of $\F[t]$.
Then there exists an endomorphism $u$ of a vector space $V$ such that:
\begin{enumerate}[(i)]
\item $u$ is a $(p,q)$-difference in $\End(V)$.
\item The endomorphism $u$ has exactly two invariant factors: $r(t^2-\delta t)$ and $r(t^2-\delta t)$.
\end{enumerate}
\end{lemma}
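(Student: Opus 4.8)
The plan is to realize $u$ inside the \emph{universal} algebra for the $(p,q)$-difference problem, namely $\Gamma := \F\langle a,b\rangle/(p(a),q(b))$, and then to cut it down by a well-chosen central element. By construction $\Gamma$ is the coproduct of the $\F$-algebras $\F[t]/(p)$ and $\F[t]/(q)$, so the alternating words $1,a,b,ab,ba,aba,bab,\dots$ form an $\F$-basis of $\Gamma$, graded by length. Set $u:=a-b$ and $s:=u^2-\delta u$. The computation in the proof of Lemma \ref{lemma:commute}, carried out inside $\Gamma$ itself, shows that $s=(\tr q)\,a+(\tr p)\,b-ab-ba-p(0)-q(0)$ and that $s$ commutes with $a$ and with $b$; since $a,b$ generate $\Gamma$, this means that $s$ is \emph{central} in $\Gamma$.

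First I would prove that $\Gamma$ is a free module of rank $4$ over the central subalgebra $\F[s]$ (which is a genuine polynomial algebra, as $s$ turns out to be transcendental over $\F$), with basis $(1,a,b,ab)$. This is a reduction-of-words argument: using $a^2=(\tr p)\,a-p(0)$, $b^2=(\tr q)\,b-q(0)$ and the formula above for $ba$, every alternating word rewrites as an $\F[s]$-combination of $1,a,b,ab$; conversely, the elements $s^n$, $s^na$, $s^nb$, $s^n(ab)$ have easily computed top-length parts (for instance $s^n\equiv(-1)^n\bigl((ab)^n+(ba)^n\bigr)$ modulo words of smaller length), and these form a triangular family with respect to the length grading, so the four chosen elements are $\F[s]$-independent.

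The key step is to upgrade this to: $\Gamma$ is free of rank $2$ over the commutative subalgebra $\F[u]$, with basis $(1,a)$. Since $u^2=\delta u+s$, the algebra $\F[u]$ is free of rank $2$ over $\F[s]$ with basis $(1,u)$, so it suffices to check that $(1,a,u,ua)$ is again an $\F[s]$-basis of $\Gamma$. Expressing these elements in the basis $(1,a,b,ab)$ — here $u=a-b$ and, after reduction, $ua=\delta\,a-(\tr p)\,b+ab+\bigl(q(0)+s\bigr)$ — gives a transition matrix which is triangular up to reordering of the basis, with an invertible scalar as determinant; hence the change of basis is invertible over $\F[s]$, and $\Gamma=\F[u]\oplus\F[u]\,a$ as $\F[u]$-modules.

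Finally, given a nonconstant monic $r\in\F[t]$, put $g(t):=r(t^2-\delta t)$, so that inside $\F[u]\subseteq\Gamma$ one has $r(s)=r(u^2-\delta u)=g(u)$, and let $V:=\Gamma/g(u)\Gamma$ (a two-sided ideal, since $g(u)$ is central), which is finite-dimensional over $\F$ of dimension $2\deg g=4\deg r$. From $\Gamma\cong\F[u]^2$ we get $V\cong\bigl(\F[t]/(g)\bigr)^2$ as $\F[t]$-modules, with $t$ acting as the image of $u$; thus $u$ acting on $V$ has exactly two invariant factors, both equal to $r(t^2-\delta t)$, which is (ii). And $u=a-b$ with $p(a)=q(b)=0$ in $\End_\F(V)$, which is (i). The only genuinely delicate point is the word combinatorics establishing the rank-$4$ freeness over $\F[s]$; once that is in place, everything after is linear algebra over the principal ideal domain $\F[s]$.
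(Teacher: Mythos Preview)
Your argument is correct, and at its core it is the same construction the paper employs: your universal algebra $\Gamma$, once reduced modulo the central element $r(s)$, is exactly the W-algebra $\calW(p,q,x)_R$ with $R=\F[C(r)]\cong\F[t]/(r)$ and $x=C(r)+p(0)+q(0)$ (so that $s$ corresponds to $x-p(0)-q(0)=C(r)$), and your $\F[u]$-basis $(1,a)$ plays the same role as the paper's choice $(1,b)$ via the basis $(I_4,A-B,B,(A-B)B)$. The one genuine difference is how the rank-$4$ freeness over $\F[s]$ is established. You prove it intrinsically by a leading-term argument on alternating words in the coproduct $\F[t]/(p)\ast\F[t]/(q)$; the paper bypasses this entirely by \emph{realizing} $a$ and $b$ as explicit $4\times 4$ matrices $A,B$ over $R$ satisfying \eqref{basicrel}, so that independence of $(I_4,A,B,AB)$ is read off the first columns, and then identifies the invariant factors through the block-companion Lemma~\ref{lemma:blockcompanion} rather than module theory over the PID $\F[s]$. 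Your route is more conceptual and makes the universality transparent; the paper's is quicker and avoids the word combinatorics, at the cost of having to guess the matrices $A$ and $B$ in advance.
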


In the present work, we will prove a refined version of this result, where we show that, in addition to the endomorphism $u$,
one can find a symplectic form $b$ on $V$ such that $(b,u)$ is a symplectic $(p,q)$-difference:

\begin{lemma}[Symplectic Duplication Lemma]\label{lemma:symplecticduplicationlemma}
Let $r$ be a nonconstant monic polynomial of $\F[t]$.
Then there exists a symplectic pair $(b,u)$ such that:
\begin{enumerate}[(i)]
\item $(b,u)$ is a symplectic $(p,q)$-difference;
\item The endomorphism $u$ has exactly two invariant factors: $r(t^2-\delta t)$ and $r(t^2-\delta t)$.
\end{enumerate}
\end{lemma}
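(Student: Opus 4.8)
The plan is to build the desired symplectic pair as the symplectic extension of the endomorphism produced by the ordinary Duplication Lemma (Lemma \ref{lemma:simpleduplicationlemma}), but with a twist that forces the invariant factors to collapse to exactly two copies of $r(t^2-\delta t)$. First I would recall that Lemma \ref{lemma:simpleduplicationlemma} gives an endomorphism $w$ of a space $W$ that is a $(p,q)$-difference in $\End(W)$ and has exactly two invariant factors, both equal to $r(t^2-\delta t)$. Naively applying Lemma \ref{lemma:fromA} to $w$ gives that $S(w)$ is a symplectic $(p,q)$-difference; but by Theorem \ref{theo:Scharlau}(a) the invariant factors of the endomorphism $h(w) = w \oplus w^t$ of the symplectic extension are those of $w$ each repeated twice — so $S(w)$ would have \emph{four} invariant factors equal to $r(t^2-\delta t)$, not two. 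So the real content is to produce, instead, an endomorphism $v$ on a \emph{half-sized} space such that $h(v)$ has two invariant factors $r(t^2-\delta t)$ (equivalently: $v$ itself has a single invariant factor $r(t^2-\delta t)$, i.e. $v$ is cyclic with minimal polynomial $r(t^2-\delta t)$) \emph{and} $S(v)$ is still a symplectic $(p,q)$-difference.

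The key step, then, is to revisit the proof of the ordinary Duplication Lemma and extract from it a cyclic endomorphism $v$ with minimal polynomial $r(t^2-\delta t)$ whose symplectic extension is a $(p,q)$-difference. Here is the mechanism I expect to work. Write $r(t^2 - \delta t)$ and recall from the earlier discussion that $F_{p,q}(t) = \Lambda_{p,q}(t^2-\delta t)$; the factor $t^2 - \delta t$ is exactly the ``pseudo-norm'' appearing in the Commutation Lemma (Lemma \ref{lemma:commute}). The natural candidate is to take $V$ cyclic for $v$ with $v$ of minimal polynomial $r(t^2-\delta t)$, and then to seek $v_1, v_2 \in \End(V \times V^\star)$ with $p(v_1) = q(v_2) = 0$, $h(v) = v_1 - v_2$, and $v_1, v_2$ both $S_V$-alternating. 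The idea is to use the companion-type structure: on the space $V = \F[t]/(r(t^2-\delta t))$ the element $t$ acts, and one wants to factor the action through a pair of quadratic elements. One writes down explicit $S_V$-alternating lifts by using the standard symplectic extension and an explicit splitting of $v$ into a sum of a $p$-annihilated and a $q$-annihilated operator on $V$ itself — but such a splitting need not exist on $V$ alone, which is precisely why the original Duplication Lemma had to double the space. The resolution is to exploit the extra room inside $V^\star$: one constructs $v_1$ and $v_2$ as $2 \times 2$ block operators on $V \times V^\star$, where the off-diagonal blocks involve the bilinear pairing, chosen so that (a) $v_1 - v_2 = v \oplus v^t$, (b) each $v_i$ satisfies its quadratic relation — this reduces to a short polynomial identity in $t$ modulo $r(t^2-\delta t)$, using $F_{p,q}(t) = \Lambda_{p,q}(t^2-\delta t)$ — and (c) each $v_i$ is $S_V$-alternating, which is a symmetry condition on the blocks with respect to $S_V$.

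The main obstacle will be step (c) combined with (b): arranging simultaneously that the block operators are genuinely $S_V$-self\-adjoint \emph{and alternating} (the alternating condition being strictly stronger than self\-adjoint in characteristic $2$, which the paper insists on allowing) while still satisfying the two quadratic equations exactly. I expect this to require choosing the off-diagonal blocks not as arbitrary maps $V \to V^\star$ and $V^\star \to V$ but as ones built from a fixed symmetric (in fact alternating-compatible) bilinear form on $V$ associated to the cyclic structure, so that the $S_V$-alternating condition becomes automatic. Concretely, I would identify $V^\star$ with $V$ via the $r(t^2-\delta t)$-cyclic pairing, reduce everything to a pair of $2 \times 2$ matrices over the commutative ring $\F[t]/(r(t^2-\delta t))$, and verify the quadratic relations there; the alternating condition then becomes a condition on the diagonal of these $2\times 2$ matrices that one can meet by a careful choice of traces, using $\delta = \tr p - \tr q$. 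Once $v$ is constructed, the pair $(b,u) := S(v) = (S_V, h(v))$ is a symplectic $(p,q)$-difference by construction, and its endomorphism $h(v) = v \oplus v^t$ has invariant factors equal to those of $v$ each doubled, i.e. exactly $r(t^2-\delta t)$ and $r(t^2-\delta t)$ — giving both conclusions (i) and (ii).
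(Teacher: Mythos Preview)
Your overall target---realizing the pair as $S(v)$ for $v$ cyclic with minimal polynomial $r(t^2-\delta t)$, and decomposing $h(v)=v_1-v_2$ with $v_1,v_2$ both $S_V$-alternating and annihilated by $p$ and $q$---is exactly right (this is the content of Corollary~\ref{cor:duplicationlemma}). But the proposed execution has a genuine gap. You plan to ``reduce everything to a pair of $2\times 2$ matrices over the commutative ring $\F[t]/(r(t^2-\delta t))$''. For that reduction to be meaningful, $v_1$ and $v_2$ must be linear over $R':=\F[t]/(r(t^2-\delta t))$, where $R'$ acts on $V\times V^\star$ through $h(v)$; in particular $v_1,v_2$ then commute with $h(v)=v_1-v_2$, hence with each other. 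But two commuting quadratic elements generate a commutative $\F$-algebra of dimension at most $4$ (spanned by $1,v_1,v_2,v_1v_2$), so the minimal polynomial of $v_1-v_2$ has degree at most $4$. As soon as $\deg r\geq 3$ this contradicts the requirement that $h(v)$ have minimal polynomial $r(t^2-\delta t)$ of degree $2\deg r$. The Commutation Lemma only guarantees that $v_1,v_2$ commute with $h(v)^2-\delta\,h(v)$, never with $h(v)$ itself, and that distinction is precisely the crux here.

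The paper's construction respects this by working over the \emph{smaller} ring $R:=\F[C(r)]\simeq\F[t]/(r)$, so that the underlying space becomes a free $R$-module of rank~$4$ rather than~$2$. The operators $u_1,u_2$ are left multiplication by explicit $4\times 4$ matrices $A,B$ over $R$ (the generators of the W-algebra $\calW(p,q,x)$ of \cite{dSPregular}) satisfying $p(A)=q(B)=0$ together with a relation forcing $(A-B)^2-\delta(A-B)$ into $R$; non-commutation of $A$ and $B$ is built in. The symplectic form is an explicit $4\times 4$ alternating block matrix $H$ whose blocks are built from an invertible symmetric $s\in\Mat_d(\F)$ with $s\,C(r)$ symmetric, supplied by Frobenius's classical theorem; one then checks by hand that $HA$ and $HB$ are alternating. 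Your ``cyclic pairing'' gestures toward this symmetrizer, but the construction you actually need is the $4\times 4$ W-algebra picture over $\F[t]/(r)$, not a $2\times 2$ picture over $\F[t]/(r(t^2-\delta t))$.
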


\begin{cor}\label{cor:duplicationlemma}
Let $r$ be a nonconstant monic polynomial of $\F[t]$, and
$v$ be a cyclic endomorphism with minimal polynomial $r(t^2-\delta t)$.
Then $S(v)$ is a symplectic $(p,q)$-difference.
\end{cor}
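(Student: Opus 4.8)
The plan is to deduce the corollary directly from the Symplectic Duplication Lemma (Lemma \ref{lemma:symplecticduplicationlemma}) together with Scharlau's classification (Theorem \ref{theo:Scharlau}). First I would apply Lemma \ref{lemma:symplecticduplicationlemma} to the polynomial $r$, obtaining a symplectic pair $(b,u)$ that is a symplectic $(p,q)$-difference and whose endomorphism $u$ has exactly two invariant factors, both equal to $r(t^2-\delta t)$. Now let $v$ be any cyclic endomorphism with minimal polynomial $r(t^2-\delta t)$; being cyclic, $v$ has a single invariant factor, namely $r(t^2-\delta t)$ itself. By point (a) of Theorem \ref{theo:Scharlau}, the symplectic pair $(b,u)$ is isometric to a symplectic extension $S(w)$ for some endomorphism $w$, and by the remark following Theorem \ref{theo:Scharlau} the invariant factors of $(b,u)$ come in pairs $p_1,p_1,p_2,p_2,\dots$ where $p_1,p_2,\dots$ are the invariant factors of $w$. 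Since $u$ has invariant factors $r(t^2-\delta t),r(t^2-\delta t)$ and nothing else, $w$ has the single invariant factor $r(t^2-\delta t)$, so $w$ is cyclic with minimal polynomial $r(t^2-\delta t)$ — hence $w$ is similar to $v$.

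Next I would invoke point (b) of Theorem \ref{theo:Scharlau}: since $v$ and $w$ are similar, $S(v)$ and $S(w)$ are isometric. Chaining the isometries gives $S(v) \simeq S(w) \simeq (b,u)$. Finally, since being a symplectic $(p,q)$-difference is preserved under isometry (the first Proposition in the subsection on symplectic pairs), and $(b,u)$ is a symplectic $(p,q)$-difference by construction, we conclude that $S(v)$ is a symplectic $(p,q)$-difference, as desired.

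The only step requiring care is matching up the invariant factors: one must be sure that $u$ having exactly the two invariant factors $r(t^2-\delta t), r(t^2-\delta t)$ forces the underlying $w$ (in the isometry $(b,u)\simeq S(w)$) to be cyclic with minimal polynomial $r(t^2-\delta t)$, rather than, say, splitting off further cyclic blocks. This is immediate from the pairing structure of the invariant factors of a symplectic pair noted after Theorem \ref{theo:Scharlau} — an odd count of any given invariant factor is impossible — but it is the one place where the argument genuinely uses the structure theory rather than being formal. Everything else is a routine composition of the isometries and the invariance of the $(p,q)$-difference property. (Alternatively, one could sidestep $w$ entirely: any two symplectic pairs whose endomorphisms are similar are isometric by point (c), and $u$ — having invariant factors $r(t^2-\delta t), r(t^2-\delta t)$ — is similar to $v\oplus v$, while $S(v)$ has $h(v)=v\oplus v^t$ which, since $v^t$ is similar to $v$, is also similar to $v\oplus v$; thus $(b,u)\simeq S(v)$ and we are done.)
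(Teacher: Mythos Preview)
Your proposal is correct, and the parenthetical alternative you give at the end is exactly the paper's argument: the paper observes that $h(v)=v\oplus v^t$ has invariant factors $r(t^2-\delta t),\,r(t^2-\delta t)$ (since $v$ is cyclic and $v^t\simeq v$), so $u$ and $h(v)$ are similar, whence $(b,u)\simeq S(v)$ by point~(c) of Theorem~\ref{theo:Scharlau}. Your main route via an intermediate $w$ with $(b,u)\simeq S(w)$ is also valid but slightly more circuitous; the direct appeal to point~(c) avoids extracting $w$ altogether.
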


\begin{proof}[Proof of Corollary \ref{cor:duplicationlemma}, assuming Lemma \ref{lemma:symplecticduplicationlemma}]
We can find a symplectic $(p,q)$-difference $(b,u)$ such that the invariant factors of $u$
are $r(t^2-\delta t),r(t^2-\delta t)$. Yet, the invariant factors of $h(v)$ are also
$r(t^2-\delta t),r(t^2-\delta t)$. Hence $(b,u) \simeq S(v)$ by Theorem \ref{theo:Scharlau},
and we deduce that $S(v)$ is a symplectic $(p,q)$-difference.
\end{proof}

\subsection{Proof of the Symplectic Duplication Lemma}

The proof, like the one of the Duplication Lemma from \cite{dSPregular}, uses the construction of the so-called ``W-algebra'' introduced in \cite{dSPregular} to
classify the regular $(p,q)$-differences in algebras of endomorphisms. Fortunately,
we will not need the most profound results on the W-algebra: the construction of this algebra turns out to be fully sufficient to us.

The proof will also use the following lemma from \cite{dSPregular} (lemma 3.6 there). For a proof, we refer to lemma 14 of
\cite{dSPsumoftwotriang}, where the assumption that $\alpha$ and $\beta$ are nonzero is unnecessary.

\begin{lemma}\label{lemma:blockcompanion}
Let $r$ be a monic polynomial in $\F[t]$ of degree $d>0$.
Then the block matrix $\begin{bmatrix}
0 & C(r)  \\
I_d & \delta\,I_d
\end{bmatrix}$ is cyclic with minimal polynomial $r(t^2-\delta t)$.
\end{lemma}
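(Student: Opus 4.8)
Write $M$ for the $2d\times 2d$ block matrix appearing in the statement. The plan is to reduce the whole lemma to one computation: that $M^2-\delta M=C(r)\oplus C(r)$. Indeed, multiplying the blocks out gives
$$M^2=\begin{bmatrix} C(r) & \delta\,C(r) \\ \delta\,I_d & C(r)+\delta^2\,I_d\end{bmatrix},$$
and subtracting $\delta M$ kills the two off-diagonal blocks and the $\delta^2 I_d$ term in the bottom-right corner. Setting $N:=M^2-\delta M=C(r)\oplus C(r)$ and using that $r$ annihilates the companion matrix $C(r)$, we get $r(N)=r(C(r))\oplus r(C(r))=0$; in other words $\bigl(r(t^2-\delta t)\bigr)(M)=0$. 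Now $r(t^2-\delta t)$ is monic of degree $2d$, which is exactly the size of $M$, so it only remains to prove that $M$ is cyclic: its minimal polynomial will then have degree $2d$ and, being a monic divisor of the monic degree-$2d$ polynomial $r(t^2-\delta t)$, will have to coincide with it (and then so will the characteristic polynomial of $M$).

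To establish cyclicity I would exhibit an explicit cyclic vector. Let $(e_1,\dots,e_d)$ denote the standard basis of $\F^d$, so that $C(r)^{j}e_1=e_{j+1}$ for $0\le j\le d-1$, which exhibits $e_1$ as a cyclic vector of $C(r)$. Put $v:=(e_1,0)\in\F^d\times\F^d$. Since $N$ acts blockwise, $N^{j}v=(e_{j+1},0)$ for $0\le j\le d-1$. On the other hand $Mv=(0,e_1)$, and because $N$ is a polynomial in $M$, it commutes with $M$, so $N^{j}(Mv)=M(N^{j}v)=(0,e_{j+1})$ for $0\le j\le d-1$. Hence the $2d$ vectors $v,Nv,\dots,N^{d-1}v,Mv,NMv,\dots,N^{d-1}Mv$ form a basis of $\F^{2d}$.

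The last step is to transfer this back to the family $v,Mv,\dots,M^{2d-1}v$. For $0\le j\le d-1$ the operator $N^{j}=(M^2-\delta M)^{j}$ is a polynomial in $M$ of degree $2j\le 2d-2$, so $N^{j}v\in\Vect_\F(v,Mv,\dots,M^{2d-1}v)$; likewise $N^{j}Mv$ is a polynomial in $M$ of degree $2j+1\le 2d-1$ applied to $v$, hence also lies in that span. Therefore $\Vect_\F(v,Mv,\dots,M^{2d-1}v)$ contains the basis found above, so it equals $\F^{2d}$; thus $v$ is a cyclic vector for $M$, which finishes the proof. The one point needing a little care is this final transfer argument — essentially the observation that $\F[M]$ is free of rank $2$ over $\F[N]$ with basis $\{1,M\}$, since $M^2=\delta M+N$ — while the rest is a direct verification.
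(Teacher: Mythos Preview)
Your argument is correct. The paper does not give its own proof of this lemma: it quotes it as lemma 3.6 of \cite{dSPregular} and refers the reader to lemma 14 of \cite{dSPsumoftwotriang} for a proof. Your self-contained approach is the natural one: the identity $M^2-\delta M=C(r)\oplus C(r)$ is exactly the computation one expects, and your explicit cyclic vector $(e_1,0)$ together with the transfer from the $\{N^j,N^jM\}$ basis to the $\{M^k\}$ family is a clean way to finish (and indeed, as you note, amounts to the fact that $\F[M]$ is free of rank $2$ over $\F[N]$ via $M^2=\delta M+N$).
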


Now, we recall the basics on the W-algebra.
Let $R$ be a commutative unital $\F$-algebra, and $x$ be an element of $R$.
We write $p(t)=t^2-\lambda t+\alpha$, $q(t)=t^2-\mu t+\beta$, and we
consider the following three matrices of $\Mat_4(R)$:
$$A:=\begin{bmatrix}
0 & -\alpha & 0 & 0 \\
1 & \lambda & 0 & 0 \\
0 & 0 & 0 & -\alpha \\
0 & 0 & 1 & \lambda
\end{bmatrix}, \;
B:=\begin{bmatrix}
0 & -x & -\beta & -\lambda \beta \\
0 & \mu & 0 & \beta \\
1 & \lambda & \mu & \lambda\mu-x \\
0 & -1 & 0 & 0
\end{bmatrix}$$
and
$$C:=AB=\begin{bmatrix}
0 & -\alpha\mu & 0 & -\alpha \beta \\
0 & \lambda\mu-x & -\beta & 0 \\
0 & \alpha & 0 & 0 \\
1 & 0 & \mu & \lambda\mu-x
\end{bmatrix}.$$
One checks that
\begin{equation}\label{basicrel}
AB+BA=\mu A+\lambda B-x I_4, \quad p(A)=0 \quad \text{and} \quad q(B)=0.
\end{equation}
From there, one deduces that $\Vect_R(I_4,A,B,C)$ is stable under multiplication.
Moreover, the observation of the first columns yields that $\Vect_R(I_4,A,B,C)$ is the free $R$-submodule of $\Mat_4(R)$
with basis $(I_4,A,B,C)$.

One defines $\calW(p,q,x)_R$ as the set $\Vect_R(I_4,A,B,C)$ equipped with its structure of $R$-algebra
inherited from the one of $\Mat_4(R)$. We simply write $\calW(p,q,x)$ instead of
$\calW(p,q,x)_R$ when no confusion can arise from the context.
Relations \eqref{basicrel} lead to
$$(A-B)^2-\delta (A-B)=(x-\alpha-\beta)\,I_4.$$

Denote by $d$ the degree of $r$.
By a classical theorem of Frobenius \cite{Frobenius}, there exists an invertible symmetric matrix $s \in \Mat_d(\F)$ such that
$s\,C(r)$ is symmetric.
Now, we work with the commutative $\F$-algebra $R:=\F[C(r)]$, which is isomorphic to the quotient ring $\F[t]/(r)$,
and with the element $x:=\bigl(p(0)+q(0)\bigr)\,1_R+C(r)$. Note that $1_R=I_d$ and that $sx$ is symmetric as a matrix of $\Mat_d(\F)$.

We consider the endomorphisms $a : X \mapsto AX$ and $b:X \mapsto BX$ of the $R$-module $\calW(p,q,x)$.
Since $p(A)=q(B)=0$, we see that $p(a)=q(b)=0$ in $\End_R(\calW(p,q,x))$.
Next, we note that $a$ is represented, in the basis $(I_4,A,B,C)$ of the $R$-module $\calW(p,q,x)$, by $A$
itself. This comes from the observation that $C=AB$ and $A^2=\lambda A-\alpha I_4$
(so that $AC=\lambda AB-\alpha B$).
Next, we also note that $b$ is represented, in the basis $(I_4,A,B,C)$ of the $R$-module $\calW(p,q,x)$, by $B$
itself. For the first and third column, this comes from noting that $B^2=\mu B-\beta I_4$.
For the second column, we write $BA=-AB+\mu A+\lambda B-x I_4$, and for the third column we write
\begin{align*}
B(AB)=(-AB+\mu A+\lambda B-x I_4)\,B & =-AB^2+\mu AB+\lambda B^2- x B \\
& =-A(\mu B-\beta I_4)+\mu AB+\lambda (\mu B-\beta I_4)-x B.
\end{align*}
Next, we set $u=a-b$. We shall prove that the invariant factors of $u$ are $r(t^2-\delta t)$ and $r(t^2-\delta t)$, and
we shall also construct a symplectic form $h$ on the $\F$-vector space $\calW(p,q,x)$ for which
$a$ and $b$ are $h$-alternating.
We start with the second point.

Let us consider the matrix
$$H:=\begin{bmatrix}
0 & 0 & 0 & s  \\
0 & 0 & s & \lambda s \\
0 & -s & 0 & 0 \\
-s & -\lambda s & 0 & 0
\end{bmatrix} \in \Mat_4(\Mat_d(\F)).$$
Since $s$ is symmetric, we see that $H$ is alternating (i.e.\ skewsymmetric with all diagonal entries zero) as a matrix of $\Mat_{4d}(\F)$.
Note that $H$ is also invertible because $s$ is invertible.

Next, we compute that
$$HA=\begin{bmatrix}
0 & 0 & s & \lambda s  \\
0 & 0 & \lambda s & (\lambda^2-\alpha) s \\
-s & -\lambda s & 0 & 0 \\
-\lambda s & (\alpha-\lambda^2) s & 0 & 0
\end{bmatrix}
\quad \text{and} \quad
HB=\begin{bmatrix}
0 & -s & 0 & 0  \\
s & 0 & \mu s & \lambda\mu s-sx \\
0 & -\mu s & 0 & -\beta s \\
0 & sx-\lambda\mu s & \beta s & 0
\end{bmatrix}.$$
Again, since $s$ and $sx$ are symmetric we find that $HA$ and $HB$ are alternating when viewed as matrices
of $\Mat_{4d}(\F)$.
Now, set $y:=C(r)$ in $R$, and consider the basis
$$\mathcal{B}:=(I_4,y I_4,\dots,y^{d-1} I_4,A,y\,A,\dots,y^{d-1}A,B,y\,B,\dots,y^{d-1}\,B,AB,y\,AB,\dots,y^{d-1}\,AB)$$
of the $\F$-vector space $\calW(p,q,x)$. Clearly, the respective matrices of the $\F$-linear maps $a$ and $b$
in that basis are $A$ and $B$, seen as matrices of $\Mat_{4d}(\F)$.
Hence, if we denote by $h$ the symplectic form on the $\F$-vector space $\calW(p,q,x)$
whose matrix in the basis $\mathcal{B}$ equals $H$, the fact that $HA$ and $HB$ are alternating yields that
$a$ and $b$ are $h$-alternating. Hence, $(h,u)$ is a symplectic $(p,q)$-difference.

To complete the proof, we prove that the invariant factors of $a-b$ are $r(t^2-\delta t)$ and $r(t^2-\delta t)$.
This has already been proved in \cite{dSPregular}, but we reproduce the short proof for the reader.
Using $q(B)=0$, it is clearly seen that $\bfC:=(I_4,A-B,B,(A-B)B)$ is still a basis of the free $R$-module $\calW(p,q,x)$.
Using $(A-B)^2=\delta (A-B)+\bigl(x-p(0)-q(0)\bigr) 1_{\Mat_4(R)}$, we get
that the $R$-module endomorphism $u$ is represented in the basis $\bfC$ by the following matrix of $\Mat_4(R)$:
$$\begin{bmatrix}
0 & C(r) & 0 & 0  \\
I_d & \delta\,I_d & 0 & 0 \\
0 & 0 & 0 & C(r) \\
0 & 0 & I_d & \delta\,I_d
\end{bmatrix}.$$
Hence, in some basis of the $\F$-vector space $\calW(p,q,x)$, the endomorphism $u$ is represented by
$$\begin{bmatrix}
0 & C(r)  \\
I_d & \delta\,I_d
\end{bmatrix} \oplus \begin{bmatrix}
0 & C(r)  \\
I_d & \delta\,I_d
\end{bmatrix} \in \Mat_{4d}(\F),$$
and it follows from Lemma \ref{lemma:blockcompanion} that $u$ is the direct sum of two cyclic endomorphisms with minimal polynomial $r(t^2-\delta t)$.
This completes the proof of the Symplectic Duplication Lemma.

\section{The classification of regular symplectic $(p,q)$-differences}\label{section:regular}

Here, we classify the regular symplectic $(p,q)$-differences thanks to the Symplectic Duplication Lemma.

\subsection{Main result}\label{section:regularstatement}

Here, we prove the following result:

\begin{theo}\label{theo:regular}
Let $v$ be a $(p,q)$-regular endomorphism of a vector space.
The following conditions are equivalent:
\begin{enumerate}[(i)]
\item $S(v)$ is a symplectic $(p,q)$-difference.
\item All the invariant factors of $v$ are polynomials in $t^2-\delta t$.
\item All the invariant factors of $S(v)$ are polynomials in $t^2-\delta t$.
\end{enumerate}
\end{theo}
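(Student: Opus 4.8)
The plan is to run the cycle (ii) $\Rightarrow$ (i) $\Rightarrow$ (iii) $\Rightarrow$ (ii); the first implication is the only one that uses the Symplectic Duplication Lemma, the last one is pure bookkeeping, and the substantial input — hidden in (i) $\Rightarrow$ (iii) — is borrowed from \cite{dSPregular}. For (ii) $\Rightarrow$ (i): if the underlying space of $v$ is trivial there is nothing to prove, so assume it is not, and list the (nonconstant) invariant factors $g_1,\dots,g_k$ of $v$. Condition (ii) lets us write $g_i=r_i(t^2-\delta t)$ for a monic $r_i\in\F[t]$, and $r_i$ is nonconstant because $\deg g_i=2\deg r_i$ while $\deg g_i\geq 1$. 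Decomposing $v\simeq v_1\oplus\cdots\oplus v_k$ into cyclic endomorphisms with respective minimal polynomials $g_i=r_i(t^2-\delta t)$, Corollary \ref{cor:duplicationlemma} yields that each $S(v_i)$ is a symplectic $(p,q)$-difference. Since $S(v)\simeq S(v_1)\bot\cdots\bot S(v_k)$ (the identity recalled just before Theorem \ref{theo:Scharlau}) and the class of symplectic $(p,q)$-differences is stable under orthogonal direct sums by Proposition \ref{prop:orthogonalsum}, we conclude that $S(v)$ is a symplectic $(p,q)$-difference. Note that this implication does not use the regularity assumption on $v$.

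For (i) $\Rightarrow$ (iii): assume $S(v)=(S_V,h(v))$ is a symplectic $(p,q)$-difference, so that $h(v)=u_1-u_2$ for some $S_V$-alternating endomorphisms $u_1,u_2$ of $V\times V^\star$ with $p(u_1)=0$ and $q(u_2)=0$. Forgetting the symplectic structure, this exhibits $h(v)$ as a $(p,q)$-difference in the algebra $\End(V\times V^\star)$. Because $v$ is $(p,q)$-regular and $h(v)$ has the same spectrum in $\overline{\F}$ as $v$ (as noted in the regular/exceptional discussion), the endomorphism $h(v)$ is $(p,q)$-regular as well. Now invoke \cite{dSPregular}: every invariant factor of a $(p,q)$-regular $(p,q)$-difference in an algebra of endomorphisms is a polynomial in $t^2-\delta t$ (section 3 there). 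Applied to $h(v)$, this is exactly (iii).

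Finally (iii) $\Rightarrow$ (ii) is immediate: since every endomorphism is similar to its transpose, $h(v)=v\oplus v^t$ has, as invariant factors, precisely the invariant factors of $v$ each repeated twice (as already observed after Theorem \ref{theo:Scharlau}), so (ii) and (iii) describe the same set of monic polynomials. The genuine obstacle in all of this is the imported classification fact from \cite{dSPregular} used in (i) $\Rightarrow$ (iii); within the present article the proof is a short assembly of the Symplectic Duplication Lemma, Scharlau's Theorem \ref{theo:Scharlau}, and the self-transposedness of similarity classes. The one minor point to watch in (ii) $\Rightarrow$ (i) is the nonconstancy of the polynomials $r_i$, needed so that Corollary \ref{cor:duplicationlemma} applies — and this is automatic once the ambient space is nonzero.
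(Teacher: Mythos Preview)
Your proof is correct and follows essentially the same route as the paper: (ii) $\Rightarrow$ (i) via Corollary \ref{cor:duplicationlemma} applied to each cyclic summand, (i) $\Rightarrow$ (iii) by forgetting the symplectic structure and invoking Proposition \ref{prop:invariantregular} on the regular $(p,q)$-difference $h(v)$, and (iii) $\Leftrightarrow$ (ii) from the doubling of invariant factors. The only differences are cosmetic: you spell out the regularity of $h(v)$ and the nonconstancy of the $r_i$, which the paper leaves implicit.
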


From there, the classification of indecomposable regular symplectic $(p,q)$-differences, as given in Table \ref{table:1}, is easily derived
by using standard techniques on companion matrices (if $r_1,\dots,r_n$ are pairwise relatively prime monic polynomials in $\F[t]$,
then $r_1(t^2-\delta t),\dots,r_n(t^2-\delta t)$ are pairwise relatively prime; when $s_1,\dots,s_n$ are pairwise relatively prime monic polynomials in $\F[t]$, the companion matrix of $\pi:=s_1\cdots s_n$ is similar to $C(s_1) \oplus \cdots \oplus C(s_n)$).

\vskip 3mm
The proof of Theorem \ref{theo:regular} is based on the combination of the Symplectic Duplication Lemma and of the following known result:

\begin{prop}[See \cite{dSPregular} proposition 3.5]\label{prop:invariantregular}
Let $V$ be a vector space and $v$ be a regular $(p,q)$-difference in $\End(V)$.
Then all the invariant factors of $v$ are polynomials in $t^2-\delta t$.
\end{prop}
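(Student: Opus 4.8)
The plan is to show that the pair $(V,v)$, viewed as an $\F[t]$-module, is \emph{induced} from an $\F[s(t)]$-module along the inclusion $\F[s(t)]\subseteq\F[t]$, which is exactly the statement that every invariant factor of $v$ is a polynomial in $t^2-\delta t$. Write $p(t)=t^2-\lambda t+\alpha$, $q(t)=t^2-\mu t+\beta$, so $\delta=\lambda-\mu$, $\alpha=p(0)$, $\beta=q(0)$; set $s(t):=t^2-\delta t$, $w:=v^2-\delta v=s(v)\in\End(V)$ and $\F[s]:=\F[s(t)]\subseteq\F[t]$. Since $F_{p,q}(t)=\Lambda_{p,q}(s(t))$, the $(p,q)$-regularity of $v$ means precisely that $\Lambda_{p,q}(w)=F_{p,q}(v)$ is invertible. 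As $t^2=\delta t+s(t)$, the ring $\F[t]$ is free of rank $2$ over $\F[s]$ on $(1,t)$, so it suffices to exhibit a subspace $N\subseteq V$ with $w(N)\subseteq N$, $V=N\oplus v(N)$ and $v$ injective on $N$: regarding $N$ as an $\F[s]$-module through $w$, these conditions say exactly that the $\F[t]$-linear map $\F[t]\otimes_{\F[s]}N\to V$, $t^i\otimes n\mapsto v^i(n)$, is an isomorphism, and since $\F[t]\otimes_{\F[s]}\bigl(\F[s]/(g)\bigr)\simeq\F[t]/\bigl(g(s(t))\bigr)$ for every monic $g\in\F[s]$, the invariant factors of such an induced module are the $g_j(s(t))$, where the $g_j$ are the invariant factors of $N$ over $\F[s]$ — all polynomials in $t^2-\delta t$.

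Next I would bring in the $W$-algebra. Writing $v=v_1-v_2$ with $p(v_1)=q(v_2)=0$ and $x:=w+\alpha+\beta$, the computation made inside the proof of Lemma~\ref{lemma:commute} gives $v_1v_2+v_2v_1=\mu v_1+\lambda v_2-x$; together with $p(v_1)=q(v_2)=0$ and \eqref{basicrel}, this makes $V$ a module over $\calW:=\calW(p,q,x)_Z$, where $Z:=\F[w]$, the generators $A,B$ act as $v_1,v_2$, the algebra $\calW$ is free of rank $4$ over its central subring $Z$, and $v=A-B$ satisfies $v^2-\delta v=x-\alpha-\beta=w$. The crux — and the place where I would invoke (or reprove) the structure theory of \cite{dSPregular} — is that the invertibility of $\Lambda_{p,q}(w)$ forces $\calW$ to be a \emph{split} Azumaya algebra over $Z$, i.e.\ $\calW\simeq\End_Z(E)$ for some rank-$2$ projective $Z$-module $E$. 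One obtains this by checking that the discriminant of the $Z$-bilinear trace form of $\calW$ in the basis $(1,v_1,v_2,v_1v_2)$ is an invertible multiple of $F_{p,q}(v)$ — so that $\calW$ is Azumaya — and that regularity forces each residue algebra of $\calW$ to contain a zero-divisor, hence to split.

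Finally I would construct $N$. Since $w$ commutes with $v,v_1,v_2$, the $w$-primary decomposition of $V$ is a decomposition into $\calW$-submodules, so, taking $N$ to be the direct sum of the subspaces built on the pieces, I may assume $Z$ is local with maximal ideal $\mathfrak m$ and residue field $\L$. Then $E\simeq Z^2$, hence $\calW\simeq\Mat_2(Z)$ and $V\simeq P\otimes_Z Z^2$ with $P:=e_{11}V$, the algebra acting through the $Z^2$ factor; here $v$ becomes a matrix $\overline v\in\Mat_2(Z)$ with $\overline v^2-\delta\overline v=w\,I_2$. Now $\overline v\bmod\mathfrak m$ is not scalar: if $\overline v\equiv cI_2$ with $c\in\L$, then in $\calW/\mathfrak m\calW=\Mat_2(\L)$ we would have $v_1\equiv v_2+c$, so $v_2\bmod\mathfrak m$ would be annihilated by the nonzero polynomials $q(t)$ and $p(t+c)$ of $\L[t]$; these would then share a root $\xi$, giving $\xi\in\Root(q)$, $\xi+c\in\Root(p)$, hence $c\in\Root(p)-\Root(q)$ and $\Lambda_{p,q}(\overline w)=F_{p,q}(c)=0$ (with $\overline w=c^2-\delta c$ the image of $w$), contradicting the invertibility of $\Lambda_{p,q}(w)$. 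By Nakayama's lemma there is $e\in Z^2$ with $(e,\overline v e)$ a $Z$-basis of $Z^2$, and $\overline v^2e=\delta\,\overline v e+w\,e$ shows that $\overline v$ has matrix $\begin{bmatrix}0&w\\1&\delta\end{bmatrix}$ in that basis. Setting $N:=P\otimes_Z(Ze)\subseteq V$, one checks $w(N)\subseteq N$, $v(N)=P\otimes_Z(Z\,\overline v e)$, $V=N\oplus v(N)$ and $v$ injective on $N$, and the first step concludes.

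The only genuinely nontrivial point is the structural fact used in the second step: that $(p,q)$-regularity makes $\calW(p,q,w+\alpha+\beta)$ a split Azumaya algebra over $\F[w]$, which is exactly what the $W$-algebra machinery of \cite{dSPregular} provides. The reformulation, the reduction to a local base, and the cyclic-vector argument are all elementary.
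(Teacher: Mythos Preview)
The paper does not supply its own proof of this proposition: it is quoted verbatim from \cite{dSPregular} (proposition~3.5 there) and used as a black box in Section~\ref{section:regular}. So there is no in-paper argument to compare against; what can be compared is the \emph{framework} of \cite{dSPregular}, part of which the present article recalls in Section~\ref{section:duplicationlemma}.

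Your proposal is precisely in that framework. The reduction in your first paragraph --- that it suffices to exhibit a $w$-stable $N$ with $V=N\oplus v(N)$ and $v|_N$ injective, because then $V\simeq\F[t]\otimes_{\F[s]}N$ and the invariant factors become $g_j(t^2-\delta t)$ --- is correct and is exactly the module-theoretic meaning of ``invariant factors are polynomials in $t^2-\delta t$''. Your second and third paragraphs then recover $N$ from the $\calW$-module structure: relation~\eqref{basicrel} makes $V$ a module over $\calW(p,q,x)_Z$ with $Z=\F[w]$; the regularity hypothesis ($\Lambda_{p,q}(w)$ invertible) is used twice, first to force $\calW\simeq\Mat_2(Z)$ locally, and second to rule out $\overline v$ being scalar modulo $\mathfrak m$ (your contradiction $F_{p,q}(c)=\Lambda_{p,q}(\overline w)=0$ is clean and correct). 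The cyclic-vector/Nakayama step then produces the block form $\bigl(\begin{smallmatrix}0&w\\1&\delta\end{smallmatrix}\bigr)$, which is Lemma~\ref{lemma:blockcompanion} in reverse.

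The one point I would flag is your sketch of the splitting of $\calW$. That the reduced trace form is nondegenerate exactly when $\Lambda_{p,q}(w)$ is a unit (hence $\calW$ is Azumaya) is a computation; but ``regularity forces each residue algebra to contain a zero-divisor, hence to split'' is too quick as stated --- having $p(\overline{v_1})=q(\overline{v_2})=0$ in a $4$-dimensional $\L$-algebra does not by itself preclude a quaternion division algebra. The actual argument in \cite{dSPregular} identifies $\calW(p,q,x)$ over a field with an explicit quaternion algebra whose norm form one can analyse, and the splitting criterion is tied to $\Lambda_{p,q}$. Since you already defer this step to \cite{dSPregular}, this is not a gap in your write-up, only in the parenthetical sketch; but if you intend the sketch to stand on its own, that sentence needs more.

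In short: your proof is correct and is essentially a reconstruction of the \cite{dSPregular} argument, organised around the induction-of-modules viewpoint rather than the explicit matrix computations the present paper favours.
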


Note, in this last result, that the converse statement fails
(although it actually holds as soon as one of the polynomials $p$ or $q$ splits over $\F$).
Consider for instance the field $\R$ of real numbers. Choose an arbitrary real number $x$ such that $0<x<4$. Using the classification of
regular $(p,q)$-differences, it was proved in section 3.3 of \cite{dSPregular}
that no cyclic endomorphism with minimal polynomial $t^2+x$ is a $(t^2+1,t^2+1)$-difference
(although such an endomorphism is obviously $(t^2+1,t^2+1)$-regular, and its sole invariant factor is a polynomial in $t^2$).

\subsection{Proof of the main result}

In Theorem \ref{theo:regular}, the implication (i) $\Rightarrow$ (iii) is a straightforward consequence of Proposition \ref{prop:invariantregular}.
Besides, the equivalence between conditions (ii) and (iii) is obvious. Assume finally that
condition (ii) holds. Let us write the invariant factors of $v$ as $r_1(t^2-\delta t),\dots,r_k(t^2-\delta t)$,
where $r_1,\dots,r_k$ are monic polynomials in $\F[t]$.
Then, we can split $v \simeq v_1 \oplus \cdots \oplus v_k$ where
$v_i$ is cyclic with minimal polynomial $r_i(t^2-\delta t)$.
So $S(v) \simeq S(v_1) \bot \cdots \bot S(v_k)$, and by Corollary \ref{cor:duplicationlemma}
each $S(v_i)$ is a symplectic $(p,q)$-difference. We conclude that $S(v)$ is a symplectic $(p,q)$-difference.

\subsection{Disproof of Conjecture \ref{conj:conj2}}

We can now invalidate Conjecture \ref{conj:conj2}.
Consider the field $\R$ of real numbers, and the endomorphism $u : x \mapsto ix$ of the $\R$-vector space $\C$.
Note that $u$ is cyclic with minimal polynomial $t^2+1$ (over the reals).
As seen in Section \ref{section:regularstatement}, $u$ is \emph{not} a $(t^2+1,t^2+1)$-difference in $\End_\R(\C)$.
Yet, by the Symplectic Duplication Lemma $S(u)$ is a symplectic $(t^2+1,t^2+1)$-difference.
This shows that Conjecture \ref{conj:conj2} fails.

However, by using the classification of regular $(p,q)$-differences in algebras of endomorphisms, it can be shown that Conjecture
\ref{conj:conj2} holds in the restricted setting where at least one of the polynomials $p$ and $q$ has a root in $\F$.

\section{Exceptional symplectic $(p,q)$-differences: when one of $p$ and $q$ splits over $\F$}\label{section:porqsplit}

\subsection{When both $p$ and $q$ are split}\label{section:pandqsplit}

\begin{Def}
Let $(u_n)_{n \geq 1}$ and $(v_n)_{n \geq 1}$ be non-increasing sequences of non-negative integers.
Let $p>0$ be a positive integer.
We say that $(u_n)_{n \geq 1}$ and $(v_n)_{n \geq 1}$ are \textbf{$p$-intertwined} when
$$\forall n \geq 1, \; u_{n+p} \leq v_n \quad \text{and} \quad v_{n+p} \leq u_{n.}$$
\end{Def}

\begin{Not}
Given an endomorphism $u$ of a finite-dimensional vector space $V$ over $\F$, a scalar $\lambda \in \F$
and a positive integer $k$, we set
$$n_k(u,\lambda):=\dim \Ker (u-\lambda\, \id_V)^k-\dim \Ker (u-\lambda\, \id_V)^{k-1}$$
i.e.\ $n_k(u,\lambda)$ is the number of cells of size \emph{at least} $k$
associated with the eigenvalue $\lambda$ in the Jordan Canonical Form of $u$.
\end{Not}

Let us recall the classification of exceptional $(p,q)$-differences in algebras of endomorphisms
(see Section 2 of \cite{dSPsumexceptional}).

First of all, if $p$ and $q$ are split then an endomorphism of a vector space is $(p,q)$-exceptional if and only if it is triangularizable
and all its eigenvalues belong to $\Root(p)-\Root(q)$.

\begin{theo}
Assume that both $p$ and $q$ are split with a double root.
Denote by $x$ the sole element of $\Root(p)-\Root(q)$.
Then every $(p,q)$-exceptional endomorphism of a vector space $V$ is a $(p,q)$-difference in $\End(V)$.
\end{theo}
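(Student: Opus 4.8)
The claim is purely about endomorphisms (no bilinear form), so the plan is to reduce it to a single nilpotent Jordan block and then exhibit an explicit decomposition. Write $p(t)=(t-a)^2$ and $q(t)=(t-b)^2$ with $a,b\in\F$, so that $\Root(p)-\Root(q)=\{x\}$ with $x=a-b$. A $(p,q)$-exceptional endomorphism $v$ of $V$ is triangularizable with its only eigenvalue equal to $x$, hence $v=x\,\id_V+n$ where $n$ is nilpotent. Since $v=v_1-v_2$ with $p(v_1)=q(v_2)=0$ is equivalent, after the substitution $v_1\rightsquigarrow v_1-a\,\id_V$, $v_2\rightsquigarrow v_2-b\,\id_V$, to writing $n=w_1-w_2$ with $w_1^2=0=w_2^2$, the theorem is exactly the assertion that \emph{every nilpotent endomorphism is a $(t^2,t^2)$-difference}, i.e.\ a difference of two square-zero endomorphisms. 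This last fact is classical (it is precisely the square-zero case recalled in the introduction, and it appears in \cite{Bothasquarezero,dSPsquarezeroquadratic}), but I would include the short argument for completeness.

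First I would reduce to a single Jordan block: decompose $n\simeq\bigoplus_i N_{k_i}$ into nilpotent Jordan cells $N_k$ (the $k\times k$ nilpotent Jordan block), and note that if each $N_k$ is a difference of two square-zero maps then so is $n$, by taking direct sums. So it suffices to treat $V=\F^k$ and $n=N_k$. Here the standard trick is to observe that on the standard basis $(e_1,\dots,e_k)$, $N_k$ sends $e_j\mapsto e_{j-1}$ (with $e_0:=0$), and to split this shift into its action on even-indexed versus odd-indexed basis vectors: define $w_1$ to kill $e_j$ for $j$ odd and send $e_j\mapsto e_{j-1}$ for $j$ even, and $w_2$ to kill $e_j$ for $j$ even and send $e_j\mapsto -e_{j-1}$ for $j$ odd. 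Then $w_1-w_2=N_k$, and each of $w_1,w_2$ squares to zero because its image is spanned by basis vectors on which it vanishes (the image of $w_1$ lies in the span of the odd-indexed $e_i$, all killed by $w_1$; similarly for $w_2$). This gives the desired decomposition $N_k=w_1-w_2$ with $w_1^2=w_2^2=0$.

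Reassembling, for general $(p,q)$-exceptional $v$ on $V$ I would write $v=x\,\id_V+n$, apply the block decomposition to get $n=w_1-w_2$ with $w_1^2=w_2^2=0$, and then set $v_1:=w_1+a\,\id_V$ and $v_2:=w_2+b\,\id_V$. Since $v_1-a\,\id_V=w_1$ is square-zero we get $p(v_1)=(v_1-a\,\id_V)^2=0$, similarly $q(v_2)=0$, and $v_1-v_2=w_1-w_2+(a-b)\id_V=n+x\,\id_V=v$. Hence $v$ is a $(p,q)$-difference in $\End(V)$.

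The only mild subtlety — the ``main obstacle'', such as it is — is bookkeeping in the single-block construction when $k$ is odd versus even, and checking the square-zero property on the last basis vector; this is a finite verification and presents no real difficulty. Everything else (the reduction to nilpotents, the passage to Jordan blocks, the affine shift absorbing $a$ and $b$) is routine, which is why the theorem as stated requires essentially no hypotheses beyond $p$ and $q$ being split with a double root.
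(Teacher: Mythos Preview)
Your proof is correct. However, the paper does not actually prove this theorem: it is stated there as a known result, recalled from Section~2 of \cite{dSPsumexceptional} (and ultimately from the square-zero case treated in \cite{Bothasquarezero,WuWang}). So there is no ``paper's own proof'' to compare against; you have supplied a complete, self-contained argument where the paper simply cites.

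That said, your argument is exactly the standard one used in those references: translate by the double roots to reduce to the statement that every nilpotent endomorphism is a difference of two square-zero endomorphisms, then handle a single Jordan block by splitting the shift into its even-to-odd and odd-to-even parts. The verification you flag as the ``main obstacle'' (parity bookkeeping at the boundary) is indeed trivial, since $e_0:=0$ absorbs the edge case uniformly. Nothing is missing.
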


As an immediate corollary, we obtain:

\begin{theo}\label{theo:pandqsplitdouble}
Assume that both $p$ and $q$ are split with a double root.
Let $v$ be a $(p,q)$-exceptional endomorphism of a vector space.
Then $S(v)$ is a symplectic $(p,q)$-difference.
\end{theo}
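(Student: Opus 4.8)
The plan is to obtain this statement as an immediate corollary of the theorem that precedes it together with Lemma \ref{lemma:fromA}. First I would invoke the preceding theorem: under the standing hypothesis that both $p$ and $q$ are split with a double root, it guarantees that every $(p,q)$-exceptional endomorphism of a vector space is a $(p,q)$-difference in its own endomorphism algebra. Applied to the given $(p,q)$-exceptional endomorphism $v \in \End(V)$, this yields a splitting $v = v_1 - v_2$ with $v_1,v_2 \in \End(V)$, $p(v_1)=0$ and $q(v_2)=0$.

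Then I would feed this into Lemma \ref{lemma:fromA}, whose conclusion is exactly that a $(p,q)$-difference $v$ in $\End(V)$ gives rise to a symplectic $(p,q)$-difference $S(v)$: one has $h(v)=h(v_1)-h(v_2)$, the identities $p(h(v_1))=h(p(v_1))=0$ and $q(h(v_2))=h(q(v_2))=0$ hold, and both $h(v_1)$ and $h(v_2)$ are $S_V$-alternating, so $(S_V,h(v))$ is a symplectic $(p,q)$-difference. This finishes the proof, and there is no genuine obstacle here: the entire content of the statement is carried by the preceding theorem, and the passage from endomorphisms to symplectic pairs is free once Lemma \ref{lemma:fromA} is available.

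The only point worth flagging is that this corollary does not by itself produce the indecomposable classification announced in Table \ref{table:2}; for that one would additionally need to verify that $S(C((t-x)^n))$ is indecomposable for each $n$, and conversely that every indecomposable exceptional symplectic $(p,q)$-difference in this case is isometric to some such $S(v)$. That finer analysis is separate from the present statement, which asserts only that all the $S(v)$ with $v$ being $(p,q)$-exceptional are symplectic $(p,q)$-differences.
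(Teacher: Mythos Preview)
Your proposal is correct and matches the paper's approach exactly: the paper states this theorem with the words ``As an immediate corollary, we obtain'' right after the preceding theorem, and the corollary is obtained precisely by combining that theorem with Lemma~\ref{lemma:fromA}, as you describe. Your remark that the classification in Table~\ref{table:2} requires a separate (and unwritten) indecomposability argument is also accurate.
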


Next, we consider the case where the two polynomials are split with simple roots.

\begin{theo}[See theorem 2.3 in \cite{dSPsumexceptional}]
Assume that both $p$ and $q$ are split with simple roots.
Let $u$ be a $(p,q)$-exceptional endomorphism of a vector space $V$.
The following conditions are equivalent:
\begin{enumerate}[(i)]
\item $u$ is a $(p,q)$-difference in $\End(V)$.
\item For all $x \in \Root(p)-\Root(q)$ such that $x \neq \delta-x$, the sequences $(n_k(u,x))_{k \geq 1}$
and $(n_k(u,\delta-x))_{k \geq 1}$ are $1$-intertwined.
\end{enumerate}
\end{theo}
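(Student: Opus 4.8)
Write $p=(t-a_1)(t-a_2)$ and $q=(t-b_1)(t-b_2)$ in $\F[t]$, with $a_1\neq a_2$ and $b_1\neq b_2$, so that $\Root(p)-\Root(q)=\{a_i-b_j : 1\leq i,j\leq 2\}$ and the involution $\iota:x\mapsto\delta-x$ maps this set to itself, splitting it into the two orbits $\{a_1-b_1,\,a_2-b_2\}$ and $\{a_1-b_2,\,a_2-b_1\}$, each of cardinality $1$ or $2$. The first step of the plan is to reduce to the case where all eigenvalues of $u$ lie in a single $\iota$-orbit. Indeed, if $u=u_1-u_2$ with $p(u_1)=q(u_2)=0$, then Lemma \ref{lemma:commute} gives that $u_1$ and $u_2$ commute with $u^2-\delta u$, hence stabilize each of its generalized eigenspaces; and the generalized eigenspace of $u^2-\delta u$ for an eigenvalue $c$ equals the sum of the generalized eigenspaces of $u$ over all $z$ with $z^2-\delta z=c$, i.e.\ over the $z$ lying in one $\iota$-orbit. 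Thus both ``being a $(p,q)$-difference'' and condition (ii) decouple over the $\iota$-orbits, and I may assume all eigenvalues of $u$ lie in one orbit $\{x,\iota(x)\}$, with $x=a_1-b_1$ and $\iota(x)=a_2-b_2$ after relabelling.

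The second step linearizes the problem with idempotents. Since $p$ has simple roots, $p(u_1)=0$ holds if and only if $e:=(a_2-a_1)^{-1}(u_1-a_1\,\id)$ is idempotent, and likewise $f:=(b_2-b_1)^{-1}(u_2-b_1\,\id)$ for $u_2$; with $\alpha:=a_2-a_1\neq0$ and $\beta:=b_2-b_1\neq0$ one gets $u-x\,\id=\alpha e-\beta f$, and conversely any such expression with $e,f$ idempotent reconstructs admissible $u_1:=a_1\,\id+\alpha e$ and $u_2:=b_1\,\id+\beta f$. So, on this orbit, ``$u$ is a $(p,q)$-difference'' is equivalent to ``$u-x\,\id$ is of the form $\alpha e-\beta f$ for some idempotents $e,f$''. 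If $\iota(x)=x$ --- which forces $a_1-a_2=b_1-b_2$, hence $\alpha=\beta$ --- then $u-x\,\id$ is nilpotent, condition (ii) is vacuous on this orbit, and one must (and can) show that every nilpotent endomorphism is a difference of two idempotents; I would do this by exhibiting, for each $n$, explicit idempotents $E_n,F_n$ of $\F^n$ with $E_n-F_n$ a single nilpotent Jordan block (a short ``staircase'' construction, valid over any field), and then taking direct sums.

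The substantive case is a genuine two-element orbit $\{x,y\}$ with $y=\iota(x)\neq x$ (so $\alpha\neq\beta$). Here I would rely on the classification of the indecomposable pairs of idempotents --- equivalently, via the previous correspondence, of the indecomposable $(p,q)$-differences supported on this orbit. Besides the one-dimensional $(x)$ and $(y)$, these are, for $k\geq1$, the ``balanced'' piece $C\bigl((t-x)^k\bigr)\oplus C\bigl((t-y)^k\bigr)$ and the two ``off-by-one'' pieces $C\bigl((t-x)^{k+1}\bigr)\oplus C\bigl((t-y)^k\bigr)$ and $C\bigl((t-x)^k\bigr)\oplus C\bigl((t-y)^{k+1}\bigr)$. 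Granting this, the implication (i)$\Rightarrow$(ii) is immediate: when $u=u_1-u_2$, the pair $(u_1,u_2)$ decomposes into such indecomposables, each of which plainly satisfies (ii), since its Jordan blocks at $x$ and at $y$ differ in size by at most $1$ --- and condition (ii) is stable under direct sums.

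For (ii)$\Rightarrow$(i), it remains to realize every $1$-intertwined pair of sequences. I would prove a combinatorial lemma: a pair of partitions whose conjugates $\lambda',\mu'$ satisfy $\lambda'_{k+1}\leq\mu'_k$ and $\mu'_{k+1}\leq\lambda'_k$ for all $k\geq1$ is a disjoint union of ``atoms'' of the shapes $\bigl((k),(k)\bigr)$, $\bigl((k+1),(k)\bigr)$, $\bigl((k),(k+1)\bigr)$ for $k\geq1$, and $\bigl((1),\emptyset\bigr)$, $\bigl(\emptyset,(1)\bigr)$ --- peel off the two largest parts $\lambda_1$ and $\mu_1$, which differ by at most $1$ by the hypothesis, as one atom and induct, checking that the hypothesis is inherited --- and then assemble the corresponding indecomposable $(p,q)$-differences by direct sums. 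The step I expect to be the main obstacle is the classification invoked in the third step: concretely, constructing the balanced and off-by-one pieces as genuine $(p,q)$-differences (a naive choice such as $u_1:=a_1\,\id+\alpha e$ with $e$ a coordinate projection does not work --- one needs a $W$-algebra / block-companion-type construction, cf.\ Lemma \ref{lemma:blockcompanion}), and, dually, proving that no other Jordan types occur. The remaining steps --- the reductions, the fixed-point case, and the combinatorial lemma --- are routine.
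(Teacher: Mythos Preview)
This theorem is not proved in the present paper: it is quoted verbatim as theorem~2.3 of \cite{dSPsumexceptional} and used as a black box to derive Theorem~\ref{theo:pandqsplitsimple}. There is therefore no in-paper proof to compare your attempt against; what you have written is a sketch of how one might reprove the cited external result.

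As such a sketch, your outline is sound and follows the expected route: reduction to a single $\iota$-orbit via Lemma~\ref{lemma:commute}, the affine change of variables to a pair of idempotents, the direct treatment of the fixed-point (nilpotent) case, and the combinatorial peeling lemma for $1$-intertwined partitions. You correctly isolate the substantive step --- constructing the ``balanced'' and ``off-by-one'' indecomposables and showing no other types occur --- and you correctly flag it as the part you have not carried out. That step is precisely the content of the relevant section of \cite{dSPsumexceptional}, so your proposal is really an outline of \emph{where} the work lies rather than a self-contained proof. One small slip: in the fixed-point case you need $(u-x\,\id)/\alpha$, not $u-x\,\id$ itself, to be a difference of two idempotents (since $\alpha=\beta$ forces $\alpha e-\beta f=\alpha(e-f)$); this is harmless because any scalar multiple of a nilpotent endomorphism is again nilpotent.
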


From this result, we obtain the following characterization of exceptional symplectic $(p,q)$-differences:

\begin{theo}\label{theo:pandqsplitsimple}
Assume that both $p$ and $q$ are split with simple roots.
Let $v$ be a $(p,q)$-exceptional endomorphism of a vector space $V$.
The following conditions are equivalent:
\begin{enumerate}[(i)]
\item $S(v)$ is a symplectic $(p,q)$-difference;
\item $h(v)$ is a $(p,q)$-difference in $\End(V \times V^\star)$;
\item $v$ is a $(p,q)$-difference in $\End(V)$;
\item For every $x \in \Root(p)-\Root(q)$ such that $x \neq \delta-x$,
the sequences $(n_k(v,x))_{k \geq 1}$ are $(n_k(v,\delta-x))_{k \geq 1}$ are $1$-intertwined.
\end{enumerate}
\end{theo}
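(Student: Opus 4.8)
The plan is to establish the cycle of implications (i) $\Rightarrow$ (ii) $\Rightarrow$ (iv) $\Rightarrow$ (iii) $\Rightarrow$ (i), relying on Lemma \ref{lemma:fromA}, on the characterization of exceptional $(p,q)$-differences in algebras of endomorphisms recalled just above (theorem 2.3 of \cite{dSPsumexceptional}), and on an elementary remark on the effect of the operator $h$ on the invariants $n_k$.

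First, (i) $\Rightarrow$ (ii) needs only the unfolding of the definitions: if $S(v)=(S_V,h(v))$ is a symplectic $(p,q)$-difference, then $h(v)=u_1-u_2$ for some $S_V$-alternating endomorphisms $u_1,u_2$ of $V \times V^\star$ with $p(u_1)=q(u_2)=0$; discarding the $S_V$-alternating condition, this already exhibits $h(v)$ as a $(p,q)$-difference in $\End(V \times V^\star)$. (No exceptionality is needed here.)

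Next, I would show that (ii), (iii) and (iv) are all equivalent by applying theorem 2.3 of \cite{dSPsumexceptional} twice, to $v$ and to $h(v)$. Since $v$ is $(p,q)$-exceptional and $h(v)$ has the same spectrum as $v$, the endomorphism $h(v)$ is $(p,q)$-exceptional as well, so the cited theorem applies to it. Applied to $v$, it yields (iii) $\Leftrightarrow$ (iv). Applied to $h(v)$, it yields that (ii) holds if and only if, for every $x \in \Root(p)-\Root(q)$ with $x \neq \delta-x$, the sequences $(n_k(h(v),x))_{k \geq 1}$ and $(n_k(h(v),\delta-x))_{k \geq 1}$ are $1$-intertwined. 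Now $\Ker(h(v)-\lambda\,\id)^k = \Ker(v-\lambda\,\id)^k \oplus \Ker(v^t-\lambda\,\id)^k$, and $v^t$ is similar to $v$, so that $n_k(h(v),\lambda)=2\,n_k(v,\lambda)$ for every $\lambda$ and every $k \geq 1$; since multiplying all the terms of two non-increasing sequences of non-negative integers by the same positive constant does not affect whether they are $1$-intertwined, the intertwining condition for $h(v)$ is equivalent to the one for $v$, i.e.\ to condition (iv). Hence (ii) $\Leftrightarrow$ (iv) $\Leftrightarrow$ (iii).

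Finally, (iii) $\Rightarrow$ (i) is exactly Lemma \ref{lemma:fromA}, which closes the cycle. There is no genuine obstacle: the statement is essentially a transcription, through the operator $h$, of the already-known classification of exceptional $(p,q)$-differences of endomorphisms in the split-with-simple-roots case, and the only points deserving a word of care are the inheritance of the $(p,q)$-exceptional hypothesis by $h(v)$ and the bookkeeping identity $n_k(h(v),\lambda)=2\,n_k(v,\lambda)$ combined with the invariance of $1$-intertwining under uniform scaling.
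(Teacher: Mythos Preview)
Your proof is correct and follows essentially the same approach as the paper: both use the trivial implications (i) $\Rightarrow$ (ii) and (iii) $\Rightarrow$ (i) (the latter via Lemma \ref{lemma:fromA}), the known equivalence (iii) $\Leftrightarrow$ (iv), and then link (ii) to (iv) through the identity $n_k(h(v),\lambda)=2\,n_k(v,\lambda)$ combined with the fact that scaling preserves $1$-intertwining. The only cosmetic difference is that the paper proves just (ii) $\Rightarrow$ (iv) to close the cycle, whereas you establish the full equivalence (ii) $\Leftrightarrow$ (iv).
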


\begin{proof}
The implications (i) $\Rightarrow$ (ii) and (iii) $\Rightarrow$ (i) are obvious, and
the equivalence (iv) $\Leftrightarrow$ (iii) is known. So, it suffices to prove that (ii) implies (iv).
Assume that (ii) holds and let $x \in \Root(p)-\Root(q)$ be such that $x \neq \delta-x$.
Then, for all $k \geq 1$, we have $n_k(h(v),x) \geq n_{k+1}(h(v),\delta-x)$ and
$n_k(h(v),\delta-x) \geq n_{k+1}(h(v),x)$. Yet $n_k(h(v),x)=2\, n_k(v,x)$ and
$n_k(h(v),\delta-x)=2\, n_k(v,\delta-x)$ for all $k \geq 1$.
Hence condition (iv) holds.
\end{proof}

With the same method, the following theorem (Theorem \ref{theo:psimpleqdouble1}) is used to obtain the characterization of
the symplectic $(p,q)$-differences when $p$ is split over $\F$ with simple roots, and $q$ is split over $\F$
with a double root (Theorem \ref{theo:pandqsplitpsimpleqdouble}).

\begin{theo}[See theorem 2.2 in \cite{dSPsumexceptional}]\label{theo:psimpleqdouble1}
Assume that $p$ is split with simple roots and that $q$ is split with a double root.
Let $u$ be a $(p,q)$-exceptional endomorphism of a vector space $V$.
The following conditions are equivalent:
\begin{enumerate}[(i)]
\item $u$ is a $(p,q)$-difference in $\End(V)$.
\item With $\Root(p)-\Root(q)=\{x,\delta-x\}$, the sequences $(n_k(u,x))_{k \geq 1}$
and $(n_k(u,\delta-x))_{k \geq 1}$ are $2$-intertwined.
\end{enumerate}
\end{theo}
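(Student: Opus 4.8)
Theorem~\ref{theo:psimpleqdouble1} is quoted from \cite{dSPsumexceptional} and will be used here as a black box; for the reader's benefit, let me outline how its proof runs. Write $p=(t-a_1)(t-a_2)$ with $a_1\neq a_2$ and $q=(t-b)^2$, so that $\Root(p)-\Root(q)=\{x,\delta-x\}$ with $x:=a_1-b$ and $\delta-x=a_2-b$; observe that here $x\neq\delta-x$ automatically, since $a_1\neq a_2$. A splitting $u=u_1-u_2$ with $p(u_1)=q(u_2)=0$ amounts, upon setting $N:=u_2-b\,\id$ and $E:=u_1-a_1\,\id$, to $u-(a_1-b)\,\id=E-N$, where $N^2=0$ and $E$ is semisimple with spectrum contained in $\{0,a_2-a_1\}$; after rescaling, the problem becomes that of deciding when an endomorphism $w$ with spectrum in $\{0,1\}$ can be written as $w=P-N$ with $P$ idempotent and $N^2=0$.

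For the sufficiency (ii)~$\Rightarrow$~(i), the plan is to construct $(u_1,u_2)$ block by block. The assumption that $(n_k(u,x))_{k\geq 1}$ and $(n_k(u,\delta-x))_{k\geq 1}$ be $2$-intertwined is precisely a Hall-type condition guaranteeing that the Jordan cells of $u$ can be organized into pairs $\bigl(J_m(x),J_n(\delta-x)\bigr)$ with $|m-n|\leq 2$, where one of $m,n$ may be $0$ (a lone cell of size at most $2$). It then suffices to exhibit, for each such elementary shape, a small explicit $(p,q)$-difference realizing it: on a space equipped with a suitable ``zigzag'' basis one writes down $u_1$ as the diagonal endomorphism with eigenvalue $a_1$ on the first block and $a_2$ on the second, and $N$ as an explicit square-zero operator supported off the diagonal, and checks that $E-N$ has the prescribed Jordan form — these building blocks are akin to the companion data later recorded in Table~\ref{table:4}. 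Since a direct sum of $(p,q)$-differences is obviously a $(p,q)$-difference, this settles the direction.

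The necessity (i)~$\Rightarrow$~(ii) is the substantial part. One fixes $u=u_1-u_2$ and works in the eigenspace decomposition $V=V_1\oplus V_2$ of $u_1$; one must bear in mind that $u_1$ and $u_2$ need \emph{not} preserve the generalized-eigenspace decomposition of $u$ — indeed $u^2-\delta u$ equals $-x(\delta-x)\,\id$ plus a nilpotent operator on all of $V$, so the Commutation Lemma~\ref{lemma:commute} gives no leverage for separating the eigenspaces of $u$, and this is exactly what produces an intertwining phenomenon. Writing $u$ and $N:=u_2-b\,\id$ in block form relative to $V_1\oplus V_2$, the identity $N^2=0$ forces the two off-diagonal blocks of $N$ to compose to $0$ and constrains the diagonal ones; one then counts dimensions along the ``string'' thus obtained — each step using the action of the square-zero operator $N$, which moves the relevant filtration degree by at most one, so that a full round trip costs at most $2$ — to reach, for every $k\geq 1$, the inequalities
\[
n_{k+2}(u,x)\ \leq\ n_k(u,\delta-x)\qquad\text{and}\qquad n_{k+2}(u,\delta-x)\ \leq\ n_k(u,x),
\]
which is the desired $2$-intertwining. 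The main obstacle is precisely this last step: turning the bare algebraic identities $p(u_1)=q(u_2)=0$ into the sharp combinatorial inequality, with the correct shift $2$ (as opposed to the shift $1$ arising when $q$ has simple roots, and to the absence of any condition when both $p$ and $q$ have a double root), is delicate; it is best organized either by peeling off one indecomposable summand of $u$ at a time or by a single global rank computation. All of this is carried out in \cite{dSPsumexceptional}, and in the present article Theorem~\ref{theo:psimpleqdouble1} enters only as a ready-made ingredient in the proof of the symplectic statement Theorem~\ref{theo:pandqsplitpsimpleqdouble}.
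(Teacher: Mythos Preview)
The paper does not prove Theorem~\ref{theo:psimpleqdouble1}: it is stated with an explicit citation to \cite{dSPsumexceptional} and used as an imported result, with no argument given in the present article. You correctly identify this and treat the theorem as a black box, which is exactly what the paper does; your additional expository sketch of the argument from \cite{dSPsumexceptional} is supplementary material that goes beyond the paper but is consistent with how the result is deployed here.
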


\begin{theo}\label{theo:pandqsplitpsimpleqdouble}
Assume that $p$ is split with simple roots and that $q$ is split with a double root.
Let $v$ be a $(p,q)$-exceptional endomorphism of a vector space $V$.
The following conditions are equivalent:
\begin{enumerate}[(i)]
\item $S(v)$ is a symplectic $(p,q)$-difference;
\item $h(v)$ is a $(p,q)$-difference in $\End(V \times V^\star)$;
\item $v$ is a $(p,q)$-difference in $\End(V)$;
\item With $\Root(p)-\Root(q)=\{x,\delta-x\}$,
the sequences $(n_k(v,x))_{k \geq 1}$ are $(n_k(v,\delta-x))_{k \geq 1}$ are $2$-intertwined.
\end{enumerate}
\end{theo}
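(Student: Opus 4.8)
The plan is to mirror exactly the structure of the proof of Theorem~\ref{theo:pandqsplitsimple}, since the two statements have the same logical skeleton with the only change being that $1$-intertwined is replaced by $2$-intertwined. First I would note that the implications (i)~$\Rightarrow$~(ii) and (iii)~$\Rightarrow$~(i) are formal: (i)~$\Rightarrow$~(ii) because an isometry of $S(v)$ with a symplectic $(p,q)$-difference yields $b$-alternating $u_1,u_2$ with $u_1-u_2=h(v)$ and $p(u_1)=q(u_2)=0$, so in particular $h(v)$ is a $(p,q)$-difference in the endomorphism algebra; and (iii)~$\Rightarrow$~(i) is immediate from Lemma~\ref{lemma:fromA}. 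The equivalence (iii)~$\Leftrightarrow$~(iv) is precisely Theorem~\ref{theo:psimpleqdouble1} applied to $v$ (recalling that since $p$ and $q$ are split, a $(p,q)$-exceptional endomorphism is triangularizable with all eigenvalues in $\Root(p)-\Root(q)=\{x,\delta-x\}$). So the only real content is the implication (ii)~$\Rightarrow$~(iv).

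For (ii)~$\Rightarrow$~(iv), I would invoke Theorem~\ref{theo:psimpleqdouble1} in the direction (i)~$\Rightarrow$~(ii), but applied to the endomorphism $h(v)$ rather than to $v$ directly: if $h(v)$ is a $(p,q)$-difference, then the sequences $(n_k(h(v),x))_{k\geq 1}$ and $(n_k(h(v),\delta-x))_{k\geq 1}$ are $2$-intertwined, i.e.\ $n_{k+2}(h(v),x)\leq n_k(h(v),\delta-x)$ and $n_{k+2}(h(v),\delta-x)\leq n_k(h(v),x)$ for all $k\geq 1$. The key elementary observation is that $h(v)=v\oplus v^t$ and $v^t$ is similar to $v$, so the Jordan structure of $h(v)$ at any eigenvalue $\lambda$ is just two copies of that of $v$; concretely $\Ker(h(v)-\lambda\,\id)^k=\Ker(v-\lambda\,\id_V)^k\times\Ker(v^t-\lambda\,\id_{V^\star})^k$ has dimension $2\dim\Ker(v-\lambda\,\id_V)^k$, whence $n_k(h(v),\lambda)=2\,n_k(v,\lambda)$ for every $k\geq 1$ and every $\lambda$. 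Substituting this into the $2$-intertwining inequalities for $h(v)$ and dividing through by $2$ yields exactly that $(n_k(v,x))_{k\geq 1}$ and $(n_k(v,\delta-x))_{k\geq 1}$ are $2$-intertwined, which is (iv).

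There is essentially no obstacle here: the proof is a line-by-line transcription of the proof of Theorem~\ref{theo:pandqsplitsimple}, substituting the index shift $k\mapsto k+2$ for $k\mapsto k+1$. The only point requiring the mildest care is to make sure that, when we apply Theorem~\ref{theo:psimpleqdouble1} to $h(v)$, the hypotheses are met --- namely that $h(v)$ is $(p,q)$-exceptional; but this is immediate from the fact that $v$ and $h(v)$ have the same spectrum (already noted in the excerpt), so $h(v)$ is $(p,q)$-exceptional whenever $v$ is. I would write the argument compactly as: ``Assume (ii) and set $\Root(p)-\Root(q)=\{x,\delta-x\}$. By Theorem~\ref{theo:psimpleqdouble1}, the sequences $(n_k(h(v),x))_{k\geq 1}$ and $(n_k(h(v),\delta-x))_{k\geq 1}$ are $2$-intertwined. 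Since $n_k(h(v),x)=2\,n_k(v,x)$ and $n_k(h(v),\delta-x)=2\,n_k(v,\delta-x)$ for all $k\geq 1$, condition (iv) follows.'' This completes the cycle of implications (iii)~$\Rightarrow$~(i)~$\Rightarrow$~(ii)~$\Rightarrow$~(iv)~$\Rightarrow$~(iii).
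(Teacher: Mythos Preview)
Your proposal is correct and follows exactly the approach the paper intends: the paper does not write out a separate proof of Theorem~\ref{theo:pandqsplitpsimpleqdouble} but simply states that it is obtained ``with the same method'' as Theorem~\ref{theo:pandqsplitsimple}, using Theorem~\ref{theo:psimpleqdouble1} in place of the corresponding $1$-intertwining result. Your verification of the hypotheses (in particular that $h(v)$ is $(p,q)$-exceptional) and the doubling identity $n_k(h(v),\lambda)=2\,n_k(v,\lambda)$ are precisely the ingredients needed.
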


\subsection{When $p$ is irreducible but $q$ splits}\label{section:pirrqsplit}

Assume here that $p$ is irreducible and that $q(t)=(t-y_1)(t-y_2)$ for scalars $y_1,y_2$ in $\F$.
Then $F_{p,q}=p(t+y_1)\,p(t+y_2)$, and $p(t+y_1)$ and $p(t+y_2)$ are irreducible and monic.

If $p(t+y_1)=p(t+y_2)$ (which can happen even if $y_1 \neq y_2$, over fields with characteristic $2$),
then an endomorphism is $(p,q)$-exceptional if and only if its minimal polynomial is a power of $p(t+y_1)$.
And it has been proved in \cite{dSPsumexceptional} (see theorem 3.2 there) that every such endomorphism is a $(p,q)$-difference.
Hence, we obtain the following result:

\begin{theo}\label{theo:pirrqsplitdouble}
Assume that  $p$ is irreducible and that $q(t)=(t-y_1)(t-y_2)$ for scalars $y_1,y_2$ in $\F$
such that $p(t+y_1)=p(t+y_2)$. Then, for every $(p,q)$-exceptional endomorphism $v$ of a vector space,
$S(v)$ is a symplectic $(p,q)$-difference.
\end{theo}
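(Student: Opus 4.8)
The statement to prove is Theorem \ref{theo:pirrqsplitdouble}: under the hypotheses that $p$ is irreducible and $q(t)=(t-y_1)(t-y_2)$ with $p(t+y_1)=p(t+y_2)$, every $(p,q)$-exceptional endomorphism $v$ of a vector space $V$ yields a symplectic $(p,q)$-difference $S(v)$. The plan is to reduce this immediately to the analogous result for endomorphisms via Lemma \ref{lemma:fromA}. By that lemma, if $v$ is a $(p,q)$-difference in $\End(V)$, then $S(v)$ is a symplectic $(p,q)$-difference. So it suffices to invoke the cited result from \cite{dSPsumexceptional} (theorem 3.2 there): under the stated hypotheses, every $(p,q)$-exceptional endomorphism of a vector space is a $(p,q)$-difference in its endomorphism algebra.

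First I would recall why the hypothesis $p(t+y_1)=p(t+y_2)$ forces the fundamental polynomial to be a prime power: since $F_{p,q}=p(t+y_1)\,p(t+y_2)$ and these two factors coincide, we get $F_{p,q}=p(t+y_1)^2$, with $p(t+y_1)$ irreducible and monic of degree $2$. Hence an endomorphism $v$ is $(p,q)$-exceptional exactly when all its eigenvalues in $\overline\F$ are roots of $F_{p,q}$, i.e.\ when the minimal polynomial of $v$ is a power of the irreducible polynomial $p(t+y_1)$. This is the class of endomorphisms covered by theorem 3.2 of \cite{dSPsumexceptional}, which asserts that every such endomorphism splits as $v_1-v_2$ with $p(v_1)=0$ and $q(v_2)=0$.

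Putting these together: given a $(p,q)$-exceptional $v\in\End(V)$, the cited endomorphism-level result supplies $v_1,v_2\in\End(V)$ with $v=v_1-v_2$, $p(v_1)=0$, $q(v_2)=0$; then Lemma \ref{lemma:fromA} (whose proof observes $h(v)=h(v_1)-h(v_2)$, $p(h(v_1))=h(p(v_1))=0$, $q(h(v_2))=h(q(v_2))=0$, and that $h(v_1),h(v_2)$ are $S_V$-alternating) immediately gives that $S(v)$ is a symplectic $(p,q)$-difference. There is genuinely no obstacle here: the whole content of the theorem is imported from \cite{dSPsumexceptional}, and the passage from endomorphisms to symplectic pairs is exactly the one-line application of Lemma \ref{lemma:fromA}. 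The only point requiring the slightest care is making explicit that ``$(p,q)$-exceptional'' in this setting means precisely ``minimal polynomial a power of $p(t+y_1)$'', so that the hypothesis of theorem 3.2 of \cite{dSPsumexceptional} is met for our $v$. I would therefore write the proof as: observe $F_{p,q}=p(t+y_1)^2$; deduce the description of $(p,q)$-exceptional endomorphisms; apply theorem 3.2 of \cite{dSPsumexceptional} to write $v=v_1-v_2$; conclude via Lemma \ref{lemma:fromA}.
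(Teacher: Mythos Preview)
Your proposal is correct and follows exactly the paper's own argument: the paper notes that under the hypothesis $p(t+y_1)=p(t+y_2)$ a $(p,q)$-exceptional endomorphism has minimal polynomial a power of $p(t+y_1)$, invokes theorem~3.2 of \cite{dSPsumexceptional} to conclude that every such $v$ is a $(p,q)$-difference in $\End(V)$, and then Lemma~\ref{lemma:fromA} yields that $S(v)$ is a symplectic $(p,q)$-difference. Your write-up is if anything slightly more explicit about the identity $F_{p,q}=p(t+y_1)^2$.
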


Next, we consider the case where $p(t+y_1) \neq p(t+y_2)$.
In that case, an endomorphism is $(p,q)$-exceptional if and only if its minimal polynomial is the product of a power
of $p(t+y_1)$ with a power of $p(t+y_2)$.

For a monic irreducible polynomial $r \in \F[t]$, denote by $n_k(u,r)$ the number of primary invariants of
the form $r^l$, with $l \geq k$, in the primary canonical form of $u$.
Then, we recall the following result from \cite{dSPsumexceptional}:

\begin{theo}[See theorem 3.3 of \cite{dSPsumexceptional}]
Assume that  $p$ is irreducible and that $q(t)=(t-y_1)(t-y_2)$ for scalars $y_1,y_2$ in $\F$
such that $p(t+y_1)\neq p(t+y_2)$.
Let $u$ be a $(p,q)$-exceptional endomorphism of a vector space $V$.
For $u$ to be a $(p,q)$-difference in $\End(V)$, it is necessary and sufficient that the sequences
$(n_k(u,p(t+y_1)))_{k \geq 1}$ and $(n_k(u,p(t+y_2)))_{k \geq 1}$ be $1$-intertwined.
\end{theo}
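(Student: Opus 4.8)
This theorem is quoted from \cite{dSPsumexceptional} and is used here as a given; for orientation, let me describe how one proves such a characterization. First, the hypothesis $p(t+y_1)\neq p(t+y_2)$ forces $y_1\neq y_2$, so $q$ has two distinct roots in $\F$ and any $u_2$ with $q(u_2)=0$ is diagonalizable; since $p$ is irreducible, any $u_1$ with $p(u_1)=0$ turns the underlying space into a vector space over the quadratic field $\L:=\F[t]/(p)$, on which $u_1$ acts as multiplication by a fixed root of $p$. A splitting $u=u_1-u_2$ is thus nothing but such an $\L$-structure together with an $\F$-linear splitting of the space, i.e.\ a module structure over the finite-dimensional $\F$-algebra $\F\langle t_1,t_2\rangle/\bigl(p(t_1),q(t_2)\bigr)$. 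The plan then has two halves: necessity of the intertwining condition, then sufficiency.

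For \emph{necessity} I would extend scalars to $\overline{\F}$: a splitting $u=u_1-u_2$ with $p(u_1)=q(u_2)=0$ gives, over $\overline{\F}$, a $(p,q)$-difference of an endomorphism for which \emph{both} polynomials are now split. Since $p(t+y_1)\neq p(t+y_2)$, the four scalars $x_i-y_j$ (with $x_1,x_2$ the roots of $p$ and $y_1,y_2$ those of $q$) are such that $p(t+y_1)$ and $p(t+y_2)$ share no root, and one checks that none of the $x_i-y_j$ satisfies $\xi=\delta-\xi$. One is thus in a position to invoke the recalled results for split polynomials over $\overline{\F}$ — Theorem~\ref{theo:psimpleqdouble1} (suitably transposed, via $u\rightsquigarrow -u$) when $p$ is inseparable, and the companion result for two polynomials with simple roots when $p$ is separable. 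Translating back then finishes this half: a $p(t+y_i)$-primary block of $u$ of size $m$ becomes, over $\overline{\F}$, either two size-$m$ Jordan blocks for $x_1-y_i$ and $x_2-y_i$ (if $p$ is separable) or a single size-$2m$ Jordan block (if $p$ is inseparable), so the $\overline{\F}$-conclusion ($1$-intertwining in the first case, $2$-intertwining of the doubled blocks in the second) becomes exactly the asserted $1$-intertwining of $\bigl(n_k(u,p(t+y_1))\bigr)_{k\geq1}$ with $\bigl(n_k(u,p(t+y_2))\bigr)_{k\geq1}$.

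For \emph{sufficiency} I would first reduce to the indecomposable case, by decomposing the space into indecomposable modules over the algebra $\F\langle t_1,t_2\rangle/\bigl(p(t_1),q(t_2)\bigr)$; one then shows that, under the present hypotheses, the indecomposable $(p,q)$-difference endomorphisms are precisely those represented by the matrices of Table~\ref{table:6} (its $n=0$ instances being the cyclic endomorphisms with minimal polynomial $p(t+y_1)$ or $p(t+y_2)$), in each of which the two companion blocks have sizes differing by at most $1$, and each is realized by an explicit pair $(u_1,u_2)$ built from small companion-type matrices over $\F$, equivalently from $1$- and $2$-dimensional modules over $\L$. One closes the argument with the elementary combinatorial fact that two non-increasing sequences of non-negative integers are $1$-intertwined if and only if they are the coordinatewise sum of the sequences attached to the shapes of Table~\ref{table:6}, and then glues the corresponding differences by direct sums. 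The genuinely hard point is establishing that these shapes, and no others, exhaust the indecomposable differences: this is where \cite{dSPsumexceptional} must bring in the classification of the finite-dimensional indecomposable modules over the (tame) string algebra above — equivalently, the classification of the mutual position of the $\F$-splitting inside the $\L$-space — and it is why we are content to cite the result here rather than reproduce its proof.
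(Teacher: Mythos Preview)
The paper does not prove this theorem: it is imported verbatim from \cite{dSPsumexceptional} (theorem 3.3 there) and simply invoked as a known result, with no argument given in the present article. You correctly identify this at the outset, so there is nothing in the paper to compare your sketch against.

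As orientation, your outline is sound. The necessity argument by scalar extension to $\overline{\F}$ is the natural one: a $p(t+y_i)$-primary block of exponent $m$ indeed becomes (when $p$ is separable) a pair of Jordan blocks of size $m$ for $x_1-y_i$ and $x_2-y_i$, and (when $p$ is inseparable, hence $\charac(\F)=2$) a single Jordan block of size $2m$ for the unique root; your remark that no $x_i-y_j$ satisfies $\xi=\delta-\xi$ follows from $p(t+y_1)\neq p(t+y_2)$ exactly as you suggest. For sufficiency, reducing to the indecomposable shapes of Table~\ref{table:6} and then using the combinatorial fact that $1$-intertwined sequences decompose into such shapes is indeed the content of the cited reference; your mention of the underlying module-theoretic classification (over the algebra $\F\langle t_1,t_2\rangle/(p(t_1),q(t_2))$) points to the genuine work done there. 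Since the present paper treats the result as a black box, your sketch already goes beyond what is required here.
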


From there, by using exactly the same method as in Section \ref{section:pandqsplit}, we conclude:

\begin{theo}\label{theo:pirrqsplitsimple}
Assume that $p$ is irreducible and that $q(t)=(t-y_1)(t-y_2)$ for scalars $y_1,y_2$ in $\F$
such that $p(t+y_1)\neq p(t+y_2)$.
Let $v$ be a $(p,q)$-exceptional endomorphism of a vector space $V$.
The following conditions are equivalent:
\begin{enumerate}[(i)]
\item $S(v)$ is a symplectic $(p,q)$-difference;
\item $h(v)$ is a $(p,q)$-difference in $\End(V \times V^\star)$;
\item $v$ is a $(p,q)$-difference in $\End(V)$;
\item The sequences $(n_k(v,p(t+y_1)))_{k \geq 1}$ are $(n_k(v,p(t+y_2)))_{k \geq 1}$ are $1$-intertwined.
\end{enumerate}
\end{theo}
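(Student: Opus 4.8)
The plan is to mirror exactly the proof of Theorem \ref{theo:pandqsplitsimple}, using the already-recalled classification of $(p,q)$-differences in endomorphism algebras (Theorem 3.3 of \cite{dSPsumexceptional}) together with the elementary observation that passing from $v$ to $h(v)=v\oplus v^t$ doubles all the relevant numerical invariants. Concretely, I would argue the cycle of implications (iii) $\Rightarrow$ (i) $\Rightarrow$ (ii) $\Rightarrow$ (iv) $\Rightarrow$ (iii). The implication (iii) $\Rightarrow$ (i) is Lemma \ref{lemma:fromA}: if $v$ is a $(p,q)$-difference in $\End(V)$, then $S(v)$ is a symplectic $(p,q)$-difference. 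The implication (i) $\Rightarrow$ (ii) is immediate: writing $S(v)=(S_V,h(v))$, a splitting of $h(v)$ as a difference of $S_V$-alternating endomorphisms annihilated by $p$ and $q$ is in particular a splitting of $h(v)$ as a $(p,q)$-difference in the full algebra $\End(V\times V^\star)$. The equivalence (iv) $\Leftrightarrow$ (iii) is precisely the recalled Theorem 3.3 of \cite{dSPsumexceptional} applied to $v$.

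The only step with any content is (ii) $\Rightarrow$ (iv), and here the key point is the transfer of the primary-invariant counting function under $v\mapsto h(v)$. Since $h(v)=v\oplus v^t$ and $v^t$ is similar to $v$, the primary canonical form of $h(v)$ is obtained from that of $v$ by doubling the multiplicity of each elementary divisor; hence for the irreducible polynomials $r_1:=p(t+y_1)$ and $r_2:=p(t+y_2)$ we get $n_k(h(v),r_i)=2\,n_k(v,r_i)$ for all $k\geq 1$ and $i\in\{1,2\}$. Therefore, if the sequences $(n_k(h(v),r_1))_{k\geq 1}$ and $(n_k(h(v),r_2))_{k\geq 1}$ are $1$-intertwined, so are $(n_k(v,r_1))_{k\geq 1}$ and $(n_k(v,r_2))_{k\geq 1}$, because the defining inequalities $u_{n+1}\leq v_n$ and $v_{n+1}\leq u_n$ are preserved under dividing both sequences by $2$. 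This is exactly the same bookkeeping as in the proof of Theorem \ref{theo:pandqsplitsimple}.

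I would also note in passing that $h(v)$ is $(p,q)$-exceptional precisely because $v$ is (its spectrum in $\overline{\F}$ is unchanged), so that Theorem 3.3 of \cite{dSPsumexceptional} does apply to $h(v)$ in establishing (ii) $\Leftrightarrow$ ``$(n_k(h(v),r_1))$ and $(n_k(h(v),r_2))$ are $1$-intertwined'', which is what is actually used. I do not anticipate any real obstacle: the proof is a routine adaptation of the split-simple-roots case, the one substitution being that the role played there by eigenvalues $x$ and $\delta-x$ of $v$ is played here by the irreducible factors $p(t+y_1)$ and $p(t+y_2)$ of the fundamental polynomial, and the role of $n_k(\cdot,\lambda)$ is played by $n_k(\cdot,r)$ for irreducible $r$; the doubling identity $n_k(h(v),r)=2\,n_k(v,r)$ holds verbatim.

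\begin{proof}
The implications (iii) $\Rightarrow$ (i) and (i) $\Rightarrow$ (ii) are obvious (the former by Lemma \ref{lemma:fromA}), and
the equivalence (iii) $\Leftrightarrow$ (iv) is the recalled Theorem 3.3 of \cite{dSPsumexceptional}. So it suffices to prove that (ii) implies (iv).
Assume that (ii) holds. Since $h(v)$ has the same spectrum in $\overline{\F}$ as $v$, it is $(p,q)$-exceptional, so by
Theorem 3.3 of \cite{dSPsumexceptional} the sequences $(n_k(h(v),p(t+y_1)))_{k \geq 1}$ and $(n_k(h(v),p(t+y_2)))_{k \geq 1}$ are $1$-intertwined,
that is, $n_k(h(v),p(t+y_1)) \geq n_{k+1}(h(v),p(t+y_2))$ and $n_k(h(v),p(t+y_2)) \geq n_{k+1}(h(v),p(t+y_1))$ for all $k \geq 1$.
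Yet $h(v)=v \oplus v^t$ and $v^t$ is similar to $v$, so $n_k(h(v),p(t+y_i))=2\,n_k(v,p(t+y_i))$ for all $k \geq 1$ and $i \in \{1,2\}$.
Dividing the above inequalities by $2$ shows that $(n_k(v,p(t+y_1)))_{k \geq 1}$ and $(n_k(v,p(t+y_2)))_{k \geq 1}$ are $1$-intertwined.
Hence condition (iv) holds.
\end{proof}
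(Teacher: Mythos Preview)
Your proposal is correct and matches the paper's approach exactly: the paper does not spell out a proof for this theorem but simply says ``by using exactly the same method as in Section \ref{section:pandqsplit}, we conclude,'' referring to the proof of Theorem \ref{theo:pandqsplitsimple}. Your cycle (iii) $\Rightarrow$ (i) $\Rightarrow$ (ii) $\Rightarrow$ (iv) $\Leftrightarrow$ (iii), with the key step being the doubling identity $n_k(h(v),r)=2\,n_k(v,r)$ applied to the irreducible factors $p(t+y_1)$ and $p(t+y_2)$, is precisely that method.
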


\subsection{Conclusion}

Forgetting the conditions on the Jordan numbers, we can sum up part of our previous findings as follows:

\begin{theo}
Assume that at least one of $p$ and $q$ has a root in $\F$.
Let $v$ be a $(p,q)$-exceptional endomorphism of a vector space $V$.
Then $v$ is a $(p,q)$-difference in $\End(V)$ if and only if $S(v)$ is a symplectic $(p,q)$-difference.
\end{theo}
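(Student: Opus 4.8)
The plan is to reduce the statement to the case-by-case results already obtained in this section, namely Theorems \ref{theo:pandqsplitdouble}, \ref{theo:pandqsplitsimple}, \ref{theo:pandqsplitpsimpleqdouble}, \ref{theo:pirrqsplitdouble} and \ref{theo:pirrqsplitsimple}. The hypothesis that at least one of $p$ and $q$ has a root in $\F$ means that, up to swapping the roles of $p$ and $q$ (which is legitimate because symplectic $(p,q)$-differences are symplectic $(q(-t),p(-t))$-differences, and $q(-t)$ has a root in $\F$ iff $q$ does — more carefully, one should pass through the translation symmetry so that the exact statements of the theorems apply), we may assume $q(t)=(t-y_1)(t-y_2)$ with $y_1,y_2 \in \F$. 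Then we are in exactly one of the following mutually exclusive situations: $p$ is split with a double root; $p$ is split with simple roots; $p$ is irreducible with $p(t+y_1)=p(t+y_2)$; $p$ is irreducible with $p(t+y_1)\neq p(t+y_2)$. When $q$ itself is split with a double root, the sub-case split-split-double is covered by Theorem \ref{theo:pandqsplitdouble}; when $q$ is split with simple roots and $p$ is split with a double root, Theorem \ref{theo:pandqsplitpsimpleqdouble} (after swapping) applies; and so on.

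First I would record the easy implication that holds unconditionally: if $v$ is a $(p,q)$-difference in $\End(V)$, then $S(v)$ is a symplectic $(p,q)$-difference, which is exactly Lemma \ref{lemma:fromA}. So the content is the converse: assuming $S(v)$ is a symplectic $(p,q)$-difference, show $v$ is a $(p,q)$-difference in $\End(V)$. Here I would invoke the four theorems of this section in turn. In each of the cases where $p$ is split with a double root and $q$ is split (Theorems \ref{theo:pandqsplitdouble}, \ref{theo:pandqsplitpsimpleqdouble}) or where $p$ is irreducible with $p(t+y_1)=p(t+y_2)$ (Theorem \ref{theo:pirrqsplitdouble}), every $(p,q)$-exceptional endomorphism is automatically a $(p,q)$-difference, so the converse is trivially true — $v$ being $(p,q)$-exceptional is part of the hypothesis. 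In the remaining two cases (Theorems \ref{theo:pandqsplitsimple} and \ref{theo:pirrqsplitsimple}), the theorems already contain the explicit equivalence (i)$\Leftrightarrow$(iii), i.e.\ ``$S(v)$ is a symplectic $(p,q)$-difference'' iff ``$v$ is a $(p,q)$-difference in $\End(V)$'', for $(p,q)$-exceptional $v$.

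The one genuine verification needed is that the four cases above, together with the symmetry $p \leftrightarrow q$ under $t \mapsto -t$, exhaust all possibilities allowed by the hypothesis. If $q$ has a root in $\F$ we may take $q$ split over $\F$ (both roots lie in $\F$ since the other root is $(\tr q)$ minus the first); then $p$ is either split or irreducible over $\F$, and if split it has either a double root or simple roots, while if irreducible the polynomials $p(t+y_1)$ and $p(t+y_2)$ are either equal or distinct — four disjoint cases, each covered. If instead only $p$ has a root in $\F$, we apply the symmetry to reduce to the previous situation with the roles interchanged; here the only subtlety to check is that the translation/negation symmetry transforms each of the tabulated situations into one of the tabulated situations with $p$ and $q$ exchanged, and that the assertion ``$v$ is a $(p,q)$-difference'' corresponds under this symmetry to ``$-v$ is a $(q(-t),p(-t))$-difference'', which is the statement of the corresponding theorem. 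I expect the main (though still routine) obstacle to be exactly this bookkeeping with the negation symmetry: one must make sure that when $p$ is split with simple roots and $q$ is split with a double root, the correct theorem to cite is \ref{theo:pandqsplitpsimpleqdouble}, and that in each case the Jordan-number conditions match up (they do, because $h(-v) \simeq -h(v)$ and $n_k(-w,\lambda)=n_k(w,-\lambda)$). Once that is in place, the theorem follows by assembling the four cases.
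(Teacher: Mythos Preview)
Your proposal is correct and is exactly the approach the paper takes: the theorem is stated there as a direct summary of the five case-by-case theorems of Section~\ref{section:porqsplit}, with no separate proof, and your write-up simply makes that case analysis explicit (Lemma~\ref{lemma:fromA} for the forward direction, the equivalences (i)$\Leftrightarrow$(iii) in Theorems~\ref{theo:pandqsplitsimple}, \ref{theo:pandqsplitpsimpleqdouble}, \ref{theo:pirrqsplitsimple}, and the unconditional statements of Theorems~\ref{theo:pandqsplitdouble} and~\ref{theo:pirrqsplitdouble} for the converse, together with the $(p,q)\leftrightarrow(q(-t),p(-t))$ symmetry to cover the missing orderings). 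One small slip: under that symmetry it is $v$ itself, not $-v$, that is a $(q(-t),p(-t))$-difference (you may be conflating this with the distinct symmetry $v\mapsto -v$, $(p,q)\mapsto(q,p)$); this does not affect the argument.
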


From there, we obtain the following corollary:

\begin{theo}
Assume that at least one of $p$ and $q$ has a root in $\F$.
Let $v$ be a $(p,q)$-exceptional endomorphism of a vector space $V$.
Then $v$ is an indecomposable $(p,q)$-difference if and only if $S(v)$ is an indecomposable symplectic $(p,q)$-difference.
\end{theo}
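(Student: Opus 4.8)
The plan is to leverage the preceding theorem — that, under the standing hypothesis, $v$ is a $(p,q)$-difference in $\End(V)$ if and only if $S(v)$ is a symplectic $(p,q)$-difference — together with Scharlau's classification (Theorem \ref{theo:Scharlau}) and the compatibility $S(w_1 \oplus \cdots \oplus w_m) \simeq S(w_1) \bot \cdots \bot S(w_m)$. The key elementary observation is that both the property of being $(p,q)$-exceptional and the property of being nontrivial are inherited by the direct summands of $v$: the eigenvalues in $\overline{\F}$ of any direct summand of $v$ form a subset of those of $v$, which all lie in $\Root(p)-\Root(q)$ since $v$ is $(p,q)$-exceptional; and a nontrivial summand acts on a nonzero space, while $S(w)$ has underlying space $W \times W^\star$, so $S(w)$ is nontrivial as soon as $w$ acts on a nonzero space.

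For the implication from left to right, suppose $v$ is an indecomposable $(p,q)$-difference, so $V \neq \{0\}$ and $S(v)$ has nontrivial underlying space $V \times V^\star$; moreover $S(v)$ is a symplectic $(p,q)$-difference by the preceding theorem. If $S(v)$ were decomposable, we could write $S(v) \simeq (b,u)^{V_1} \bot (b,u)^{V_2}$ for nontrivial symplectic $(p,q)$-differences. By Theorem \ref{theo:Scharlau}(a) each factor is isometric to some $S(w_i)$, with $w_i$ acting on a nonzero space since $\dim V_i = 2\dim W_i > 0$; hence $S(v) \simeq S(w_1) \bot S(w_2) \simeq S(w_1 \oplus w_2)$, and Theorem \ref{theo:Scharlau}(b) forces $v \simeq w_1 \oplus w_2$. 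Each $w_i$ is $(p,q)$-exceptional as a summand of $v$, so $S(w_i)$ being a symplectic $(p,q)$-difference implies, by the preceding theorem, that $w_i$ is a $(p,q)$-difference in $\End(W_i)$. Thus $v$ splits as a direct sum of two nontrivial $(p,q)$-differences, contradicting indecomposability; hence $S(v)$ is indecomposable.

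For the converse, suppose $S(v)$ is an indecomposable symplectic $(p,q)$-difference. Then its underlying space is nontrivial, whence $V \neq \{0\}$, and $v$ is a $(p,q)$-difference in $\End(V)$ by the preceding theorem. If $v$ were a decomposable $(p,q)$-difference, write $v \simeq v_1 \oplus v_2$ with $v_1,v_2$ nontrivial $(p,q)$-differences; each $v_i$ is $(p,q)$-exceptional as a summand of $v$, so $S(v_i)$ is a nontrivial symplectic $(p,q)$-difference by the preceding theorem, and $S(v) \simeq S(v_1 \oplus v_2) \simeq S(v_1) \bot S(v_2)$ exhibits $S(v)$ as an orthogonal direct sum of two nontrivial symplectic $(p,q)$-differences, a contradiction. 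Hence $v$ is an indecomposable $(p,q)$-difference.

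There is no serious obstacle here: the whole content is carried by the preceding theorem and by Scharlau's classification, and the only points requiring care are the verification that ``exceptional'' and ``nontrivial'' pass to direct summands, and keeping track of the identification $S(w_1) \bot S(w_2) \simeq S(w_1 \oplus w_2)$ together with its converse consequence, namely that an isometry $S(v) \simeq S(w_1) \bot S(w_2)$ yields a similarity $v \simeq w_1 \oplus w_2$ via Theorem \ref{theo:Scharlau}(b).
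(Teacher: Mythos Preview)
Your proof is correct and follows exactly the approach the paper intends: the paper states this result as an immediate corollary of the preceding theorem and gives no further argument, so you have simply written out the details that the paper leaves implicit, using Scharlau's classification and the compatibility $S(w_1 \oplus w_2) \simeq S(w_1) \bot S(w_2)$ precisely as one would expect.
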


From there, the classification of indecomposable symplectic $(p,q)$-differences stated in Tables \ref{table:2} to \ref{table:6} is
directly obtained from the classification of indecomposable $(p,q)$-differences given in tables 1 to 5 of \cite{dSPsumexceptional}.

\section{Exceptional symplectic $(p,q)$-differences: when $p$ and $q$ are irreducible with the same splitting field}
 \label{section:pandqirrsamesplittingfield}

Here, we assume that both $p$ and $q$ are irreducible, with the same splitting field $\L$ in $\overline{\F}$.

Denote by $\sigma$ the non-identity automorphism of $\L$ over $\F$ if $\L$ is separable over $\F$,
otherwise set $\sigma:=\id_\L$. In any case, splitting $p(t)=(t-x_1)(t-x_2)$ and $q(t)=(t-y_1)(t-y_2)$
in $\L$, we find that $\sigma$ exchanges $x_1$ and $x_2$, and that it exchanges $y_1$ and $y_2$.
Hence, $(x_1-y_1)(x_2-y_2)=(x_1-y_1)\,\sigma(x_1-y_1)=N_{\L/\F}(x_1-y_1)$ and
likewise $(x_1-y_2)(x_2-y_1)=N_{\L/\F}(x_1-y_2)$.
Hence
$$F_{p,q}(t)=\bigl(t^2-\delta t+N_{\L/\F}(x_1-y_1)\bigr)\bigl(t^2-\delta t+N_{\L/\F}(x_1-y_2)\bigr).$$
Next, assume that $F_{p,q}$ has a root in $\F$. Then
we have respective roots $x$ and $y$ of $p$ and $q$, together with a scalar $d\in \F$ such that $x=y+d$.
Hence, $\sigma(x)=\sigma(y)+d$ and it follows that $q(t)=p(t+d)$.
Hence, each monic irreducible divisor of $F_{p,q}$ in $\F[t]$:
\begin{itemize}
\item Is either a polynomial in $t^2-\delta t$;
\item Or equals $t-z$ for some $z \in \Root(p)-\Root(q)$,
in which case $q(t)=p(t+z)$ and $(t-z)^2=t^2-\delta t +(x-y)^2$.
\end{itemize}

Before we tackle the general case, we start with the special case $p=q$, and in this case we
look at the symplectic pairs $(b,u)$ in which $u$ is nilpotent (they are relevant since they are $(p,q)$-exceptional).

Let us recall the following easy lemma from \cite{dSPsumexceptional} (lemma 4.4 there):

\begin{lemma}\label{stablep=qlemma}
Let $u_1$ and $u_2$ be endomorphisms of a vector space such that $p(u_1)=p(u_2)=0$. Then $\Ker(u_1-u_2)$ is stable under $u_1$ and $u_2$.
\end{lemma}

\begin{proof}
Let us write $p(t)=t^2-\lambda t+\alpha$. Let $x \in \Ker(u_1-u_2)$ and set $y:=u_1(x)=u_2(x)$.
Then $u_1(y)=(u_1)^2(x)=\lambda u_1(x)-\alpha x$ and $u_2(y)=(u_2)^2(x)=\lambda u_2(x)-\alpha x$ and hence $u_1(y)=u_2(y)$, i.e.\ $u_1(x) \in \Ker (u_1-u_2)$ and $u_2(x) \in \Ker(u_1-u_2)$.
\end{proof}

\begin{lemma}\label{lemma:nilpotent}
Assume that $p$ is irreducible.
Let $(b,u)$ be a symplectic $(p,p)$-difference in which $u$ is nilpotent.
Then, for every odd integer $k \geq 1$, the number $j_k(u)$ of Jordan cells of $u$ of size $k$ for the eigenvalue $0$
is a multiple of $4$.
\end{lemma}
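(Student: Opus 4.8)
The plan is to pass to the quadratic field extension $\L:=\F[t]/(p)$ and to analyze the nilpotent endomorphism $u^2$, the point being that $u^2$ is $\L$-linear, so a symplectic classification over $\L$ applies to it, while its Jordan structure over $\F$ determines the odd-size Jordan numbers of $u$ modulo $4$.

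We may assume $V\neq\{0\}$, and we fix $b$-alternating endomorphisms $u_1,u_2$ with $p(u_1)=p(u_2)=0$ and $u=u_1-u_2$; write $p(t)=t^2-\lambda t+\alpha$, so $\alpha\neq0$ since $p$ is irreducible. As $p$ is irreducible, the minimal polynomial of $u_1$ is $p$, hence $\F[u_1]$ is a field isomorphic to $\L$ and $V$ becomes an $\L$-vector space through $u_1$. Since $p=q$ we have $\delta=0$, so Lemma \ref{lemma:commute} shows that $u_1$ — and therefore all of $\F[u_1]=\L$ — commutes with $u^2$; thus $u^2$ is an $\L$-linear nilpotent endomorphism of $V$. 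Also, since $(b,u)$ is a symplectic pair, the remark following Theorem \ref{theo:Scharlau} shows that the invariant factors of $u$ occur in equal pairs, so $j_k(u)$ is even for every $k\geq1$; write $j_k(u)=2e_k$ with $e_k\in\N$. The claim is then that $e_k$ is even whenever $k$ is odd.

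The crucial step is to put a symplectic form over $\L$ on $V$ for which $u^2$ is alternating. I would argue as follows. Fix any nonzero $\F$-linear form $\tau:\L\to\F$; since $\L$ is a field, $(\zeta,\eta)\mapsto\tau(\zeta\eta)$ is a nondegenerate $\F$-bilinear form on $\L$, so $\ell\mapsto\tau\circ\ell$ is an isomorphism from $\Hom_\L(V,\L)$ onto $\Hom_\F(V,\F)$. Lifting each $\F$-linear form $b(x,-)$ through this isomorphism yields a map $\beta:V\times V\to\L$ that is $\L$-linear in the second variable and satisfies $b=\tau\circ\beta$; because $u_1$, hence every element of $\L$, is $b$-selfadjoint, the uniqueness of such lifts forces $\beta$ to be $\L$-bilinear and skew-symmetric, and $\beta$ is nondegenerate since $b$ is. It remains to check $\beta(x,x)=0$ and $\beta(x,u^2x)=0$ for all $x$. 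Expanding $u^2=\lambda(u_1+u_2)-u_1u_2-u_2u_1-2\alpha\,\id_V$ and using $u_i^2=\lambda u_i-\alpha\,\id_V$, the fact that $u_1,u_2$ are $b$-alternating and $b$-selfadjoint, and the fact that $b$ is alternating, one checks directly that $b(x,u^2x)=0$ and $b(u_1x,u^2x)=0$ for all $x$; moreover $u_1^{-1}=\alpha^{-1}(\lambda\,\id_V-u_1)$ lies in $\F[u_1]$ and is $b$-alternating. Since $\{1,u_1^{-1}\}$ is an $\F$-basis of $\L$ (because $u_1$ is not a scalar operator) and since $\beta(\zeta x,y)=\zeta\,\beta(x,y)$, the quantities $\tau\bigl(\zeta\,\beta(x,x)\bigr)$ and $\tau\bigl(\zeta\,\beta(x,u^2x)\bigr)$ reduce, for $\zeta\in\{1,u_1^{-1}\}$, to the values of $b$ computed above, hence vanish; nondegeneracy of the pairing attached to $\tau$ then gives $\beta(x,x)=0=\beta(x,u^2x)$. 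Thus $(\beta,u^2)$ is a symplectic pair over $\L$.

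Finally I would assemble the conclusion. Applying Theorem \ref{theo:Scharlau} over $\L$ to $(\beta,u^2)$ shows that the $\L$-invariant factors of $u^2$ occur in equal pairs, so every Jordan multiplicity of $u^2$ over $\L$ is even; since one $\L$-Jordan cell of size $m$ is, over $\F$, a direct sum of two Jordan cells of size $m$, every Jordan multiplicity $j_m(u^2)$ of $u^2$ over $\F$ is divisible by $4$. On the other hand, squaring a nilpotent Jordan cell of size $2l$ yields two cells of size $l$, and of size $2l+1$ yields one cell of size $l$ and one of size $l+1$; hence $j_m(u^2)=2j_{2m}(u)+j_{2m+1}(u)+j_{2m-1}(u)$ for every $m\geq1$. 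Substituting $j_k(u)=2e_k$ gives $4e_{2m}+2(e_{2m+1}+e_{2m-1})\equiv0\pmod4$, i.e.\ $e_{2m+1}\equiv e_{2m-1}\pmod2$ for all $m\geq1$; as $e_k=0$ for large $k$, every $e_k$ with odd index is even, that is, $j_k(u)\equiv0\pmod4$ for every odd $k$. The genuinely delicate point in this plan is the construction of $\beta$ when $\charac(\F)=2$ and $p$ is inseparable, where the trace form of $\L/\F$ degenerates: that is precisely why the argument uses an arbitrary nonzero $\tau$ rather than the trace, and why it relies on both $u_1$ and $u_1^{-1}$ being $b$-alternating.
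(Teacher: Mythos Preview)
Your proof is correct and takes a genuinely different route from the paper's. The paper argues by induction on the nilindex of $u$: it invokes Lemma~\ref{stablep=qlemma} to see that $u_1,u_2$ stabilize $\Ker u$ (hence $\im u$), passes to the induced symplectic $(p,p)$-difference on $\im u/(\Ker u\cap\im u)$, uses the shift $j_k(\overline{u})=j_{k+2}(u)$, and finishes with the divisibility of $\dim V$ by $4$ to handle $j_1(u)$. Your argument instead exploits the $\L$-structure coming from $u_1$: you transfer $b$ to an $\L$-valued symplectic form $\beta$ for which $u^2$ is alternating (this is where the Commutation Lemma enters), apply Scharlau over $\L$ to $u^2$, and then decode the Jordan numbers of $u$ from those of $u^2$ via the recursion $j_m(u^2)=2j_{2m}(u)+j_{2m+1}(u)+j_{2m-1}(u)$. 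The paper's proof is shorter and more elementary; yours is more structural and, notably, never uses Lemma~\ref{stablep=qlemma}. One cosmetic point: when verifying $\beta(x,u^2x)=0$ you could simply take $\{1,u_1\}$ as the $\F$-basis of $\L$, since you computed $b(x,u^2x)=0$ and $b(u_1x,u^2x)=0$ directly; your choice of $\{1,u_1^{-1}\}$ works too because $u_1^{-1}\in\Vect_\F(1,u_1)$, but it adds an unnecessary step.
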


\begin{proof}
Denote by $V$ the underlying vector space of $(b,u)$.
 By assumption, there exists a $b$-alternating endomorphism $u_1$ of $V$ such that
$p(u_1)=0$. Since $p$ is irreducible, the invariant factors of $u_1$ are all equal to $p$, and
since $u_1$ is $b$-alternating there is an even number $k$ of them. It follows that $\dim V=2k$ is a multiple of $4$.

Next, if $u=0$ then all the Jordan cells of $u$ have size $1$ and $j_1(u)=\dim V$, a multiple of $4$.
We now proceed by induction on the nilindex of $u$.
Assume that $u \neq 0$ and let $u_1,u_2$ be $b$-alternating endomorphisms of $V$ such that $u=u_1-u_2$ and $p(u_1)=p(u_2)=0$.
By Lemma \ref{stablep=qlemma}, $u_1$ and $u_2$ stabilize $\Ker u$. Since they are $b$-alternating, they also stabilize
$(\Ker u)^{\bot_b}=\im u$. Thus, the induced pair $(b,u)^{\im u}$, which we denote by $(\overline{b},\overline{u})$,
is a symplectic $(p,p)$-difference. Moreover, the nilindex of $\overline{u}$ is less than the one of $u$.
Noting that $\overline{u}$ is an endomorphism of $\im u/(\Ker u \cap \im u)$, and working cell by cell, it is not difficult to see
(see e.g.\ section 5.4.1 of \cite{dSPsquarezeroquadratic}) that $\forall k \geq 1, \; j_k(\overline{u})=j_{k+2}(u)$.
Hence, by induction $j_k(u)$ is a multiple of $4$ for every odd $k \geq 3$.

It remains to see that $j_1(u)$ is also a multiple of $4$. For this, we gather from the classification of symplectic pairs that
$j_k(u)$ is even for every $k \geq 1$, and hence $k j_k(u)$ is a multiple of $4$ for all $k \geq 2$.
Finally $\dim V=j_1(u)+\underset{k=2}{\overset{+\infty}{\sum}} k j_k(u)$. Hence $j_1(u)=\dim V-\underset{k=2}{\overset{+\infty}{\sum}} k j_k(u)$
is a multiple of $4$.
\end{proof}

\begin{theo}\label{theo:pandqirrsamesplittingfield}
Assume that $p$ and $q$ are irreducible with the same splitting field in $\overline{\F}$.
Let $(b,u)$ be a $(p,q)$-exceptional symplectic pair. The following conditions are equivalent:
\begin{enumerate}[(i)]
\item $(b,u)$ is a symplectic $(p,q)$-difference;
\item For every $z \in \F \cap (\Root(p)-\Root(q))$ and every odd integer $k \geq 1$,
the number of Jordan cells of $u$ of size $k$ for the eigenvalue $z$ is a multiple of $4$.
\end{enumerate}
\end{theo}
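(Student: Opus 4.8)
The plan is to exploit the structural reductions already set up in the excerpt: since $(b,u)$ is $(p,q)$-exceptional, all eigenvalues of $u$ lie in $\Root(p)-\Root(q)$, and by the analysis of $F_{p,q}$ when $p,q$ are irreducible with the same splitting field $\L$, each monic irreducible divisor of $F_{p,q}$ in $\F[t]$ is either a polynomial in $t^2-\delta t$ or of the form $t-z$ with $z\in\F\cap(\Root(p)-\Root(q))$ (the latter forcing $q(t)=p(t+z)$). The first step is to split $(b,u)$ orthogonally according to the primary decomposition of $u$ relative to these irreducible divisors: the part whose primary factors are all polynomials in $t^2-\delta t$ is \emph{always} a symplectic $(p,q)$-difference, by the Symplectic Duplication Lemma (Corollary \ref{cor:duplicationlemma}) together with Theorem \ref{theo:Scharlau}. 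So the content of the theorem is concentrated on the summands supported at an eigenvalue $z\in\F\cap(\Root(p)-\Root(q))$.

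\textbf{Reduction to the nilpotent case.} Fix such a $z$, and let $(b_z,u_z)$ be the corresponding orthogonal summand. Since $q(t)=p(t+z)$, the substitution $u_1\mapsto u_1-z\,\id$, $u_2\mapsto u_2$ (sending a $(p,q)$-decomposition to a $(p,p)$-decomposition after recentering, using that $z$-translations preserve $b$-alternation since $b$ is alternating hence the shift by a scalar is $b$-alternating when $\charac(\F)=2$; one must be a bit careful here and phrase it as $u-z\,\id$ being a $(p(t+z),p(t))$-difference, etc.) reduces us to showing: for $p$ irreducible and $w:=u_z-z\,\id$ \emph{nilpotent}, the pair $(b_z,w)$ is a symplectic $(p,p(t-\text{shift}))$-difference iff every $j_k(w)$ with $k$ odd is a multiple of $4$. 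The necessity direction is exactly Lemma \ref{lemma:nilpotent} (after the translation). For sufficiency I would build the decomposition by hand: group the Jordan cells of $w$ into blocks of four equal odd-size cells and into pairs of equal even-size cells; for the even-size pairs, a single cell paired with a cell gives an $S(v)$ with $v$ a single nilpotent Jordan block, and one invokes the classification of symplectic pairs (Theorem \ref{theo:Scharlau}) to realize it as $S(\text{Jordan block})$, which is a $(p,q)$-difference by the matrix-algebra results already recalled (or directly by Lemma \ref{lemma:fromA} once one checks a single nilpotent block of any size is a $(p,p')$-difference in the exceptional regime — this is in \cite{dSPsumexceptional}); for the odd-size quadruples, four equal cells of odd size $k$ assemble, via $S(C((t-z)^k)\oplus C((t-z)^k))$, into the pair produced by the Symplectic Duplication Lemma applied to a suitable $r$, or alternatively one writes the quadruple as $S(v)\bot S(v)$ with $v$ two Jordan blocks and again reduces to the matrix case.

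\textbf{Assembling the pieces.} Having shown each indecomposable-type building block is a symplectic $(p,q)$-difference, Proposition \ref{prop:orthogonalsum} lets me reassemble $(b_z,u_z)$, and then the global orthogonal splitting $(b,u)\simeq\big(\bot_z (b_z,u_z)\big)\bot(\text{polynomial-in-}t^2-\delta t\text{ part})$ finishes the proof of (ii)$\Rightarrow$(i). The converse (i)$\Rightarrow$(ii) follows by restricting any $(p,q)$-decomposition of $u$ to the $u$-stable, $b$-orthogonal summand $(b_z,u_z)$ — using the Commutation Lemma (Lemma \ref{lemma:commute}) and the fact that $F_{p,q}(u)$ has the right Fitting behavior so that $u_1,u_2$ stabilize each primary summand — and then applying Lemma \ref{lemma:nilpotent} to the recentered nilpotent pair.

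\textbf{Main obstacle.} The delicate point is the characteristic-$2$ bookkeeping in the reduction $q(t)=p(t+z)\rightsquigarrow$ nilpotent case: one must check that translating an endomorphism by the scalar $z\in\F$ is compatible with $b$-alternation (it is, because $x\mapsto z\,x$ is $b$-alternating for a symplectic, i.e. alternating, form $b$), and that a $(p,q)$-difference $u=u_1-u_2$ with $p(u_1)=q(u_2)=0$, $q(t)=p(t+z)$, translates to $u-z\,\id=u_1-(u_2+z\,\id)$ with $p(u_1)=0$ and $p((u_2+z\,\id))=q(u_2)=0$ — so it is genuinely a $(p,p)$-difference, and everything reduces cleanly. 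The other place requiring care is matching the exact arithmetic of "blocks of four odd cells, pairs of even cells" to what the Duplication Lemma and the matrix-algebra classification actually deliver; this is routine but must be stated precisely to avoid an off-by-one in cell sizes.
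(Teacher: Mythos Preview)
Your overall architecture matches the paper's: translate by $z$ to reduce the eigenvalue-$z$ summand to a nilpotent $(p,p)$-difference, invoke Lemma~\ref{lemma:nilpotent} for (i)$\Rightarrow$(ii), and for (ii)$\Rightarrow$(i) assemble the pair from pieces handled by the Symplectic Duplication Lemma together with the matrix-algebra classification. The (i)$\Rightarrow$(ii) direction is fine as you wrote it (and is exactly what the paper does, restricting to $\Nil(u-z\id)$ via commutation with $(u-z\id)^2=u^2-\delta u+z^2$).

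There is, however, a genuine gap in your (ii)$\Rightarrow$(i) treatment of the \emph{even}-size cells at $z$. You claim that a single nilpotent Jordan block of even size is a $(p,p)$-difference in the matrix algebra and appeal to \cite{dSPsumexceptional}; this is false. Already in size $2$: if $p$ is irreducible and $u_1-u_2=N=\begin{pmatrix}0&1\\0&0\end{pmatrix}$ with $p(u_1)=p(u_2)=0$, then the relation $u_1N+Nu_1=(\tr p)N$ forces the $(2,1)$-entry of $u_1$ to vanish, so $u_1$ is upper-triangular and hence has an eigenvalue in $\F$, contradicting irreducibility of $p$. The paper avoids this trap by the observation you overlooked: whenever $z\in\F\cap(\Root(p)-\Root(q))$ one has $2z=\delta$ (because $z$ and $\delta-z$ are Galois-conjugate and both lie in $\F$), hence $(t-z)^2=t^2-\delta t+z^2$ and $(t-z)^{2k}$ is a polynomial in $t^2-\delta t$. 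Thus a single even-size cell $v_i=C((t-z)^{2k})$ falls under Corollary~\ref{cor:duplicationlemma} directly, with no appeal to the matrix classification. For the odd-size cells the paper does exactly your ``alternative'': take $v_i$ to be the direct sum of \emph{two} cyclic blocks $C((t-z)^{2k+1})$, which \cite{dSPsumexceptional} shows is a matrix $(p,q)$-difference, and then apply Lemma~\ref{lemma:fromA}. (Your first suggestion for the odd case, via the Duplication Lemma, does not work since $(t-z)^{2k+1}$ has odd degree and cannot be of the form $r(t^2-\delta t)$.)
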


\begin{proof}
Assume first that there exist $b$-alternating endomorphisms $u_1$ and $u_2$ such that
$u=u_1-u_2$, $p(u_1)=0$ and $q(u_2)=0$.
Let $z \in \F \cap (\Root(p)-\Root(q))$. We have seen in the beginning of the present section that
$q(t)=p(t+z)$.
Hence $u-z \id=u_1-(u_2+z \id)$, and $p(u_1)=0=p(u_2+z \id)$.
Replacing $q$ with $p$ and $u$ with $u-z\id$, we are reduced to the situation where $p=q$, and in that situation
we wish to prove that, for every odd $k \geq 1$, the number of Jordan cells of $u$ of size $k$ for the eigenvalue $0$ is a multiple of $4$.

Now, in the present reduced situation, we have $\delta=0$. Hence, the Commutation lemma shows that both $u_1$ and $u_2$ commute with $u^2$.
It follows that $u_1$ and $u_2$ stabilize the nilspace $\Nil(u):=\underset{n \in \N}{\bigcup} \Ker u^n=\underset{n \in \N}{\bigcup} \Ker u^{2n}$.
Since $u$ is $b$-selfadjoint, $\Nil(u)^{\bot}=\underset{n \in \N}{\bigcap} \im u^n$, which is linearly disjoint from $\Nil(u)$, and hence $\Nil(u)$
is $b$-regular.
Hence, in the induced pair $(\overline{b},\overline{u})=(b,u)^{\Nil(u)}$,
the endomorphism $\overline{u}$ has the same Jordan numbers as $u$ for the eigenvalue $0$.
Moreover, since $u_1$ and $u_2$ stabilize $\Nil(u)$, the pair $(\overline{b},\overline{u})$
is a symplectic $(p,p)$-difference. By Lemma \ref{lemma:nilpotent}, for every odd integer $k \geq 1$,
the number of Jordan cells of size $k$ of $\overline{u}$ is a multiple of $4$, and hence so is the one of
$u$ for the eigenvalue $0$.

Conversely, assume that condition (ii) holds.
We have $(b,u)\simeq S(v)$ for some endomorphism $v$ of a vector space. By condition (ii) and the fact that $u$
is $(p,q)$-exceptional, the minimal polynomial of $v$ is a product of irreducible polynomials of the form $r(t^2-\delta t)$ and of polynomials of the form
$t-z$ for $z \in \F \cap (\Root(p)-\Root(q))$ (and then $(t-z)^2=t^2-\delta t+z^2$).
Hence, we can split $v \simeq v_1 \oplus \cdots \oplus v_N$ in which, for all $i \in \lcro 1,N\rcro$:
\begin{itemize}
\item Either $v_i$ is cyclic with minimal polynomial of the form $r^n(t^2-\delta t)$ for some monic $r \in \F[t]$ and some integer $n \geq 1$;
\item Or $v_i$ is cyclic with minimal polynomial $(t-z)^{2k}=(t^2-\delta t+z^2)^k$ for some
$z \in \F \cap (\Root(p)-\Root(q))$ and some integer $k \geq 1$;
\item Or $v_i$ is the direct sum of two cyclic endomorphisms with minimal polynomial
$(t-z)^{2k+1}$ for some
$z \in \F \cap (\Root(p)-\Root(q))$ and some integer $k \geq 0$.
\end{itemize}
In the first two cases, the Symplectic Duplication Lemma shows that $S(v_i)$ is a symplectic $(p,q)$-difference.
In the last one, the classification of $(p,q)$-differences in endomorphisms, given in section 4 of \cite{dSPsumexceptional}, directly shows
that $v_i$ is a $(p,q)$-difference, and hence $S(v_i)$ is a symplectic $(p,q)$-difference.
We conclude that $(b,u) \simeq S(v) \simeq S(v_1) \bot \cdots \bot S(v_N)$ is a symplectic $(p,q)$-difference.
\end{proof}

As a consequence, we have:

\begin{cor}
Assume that $p$ and $q$ are irreducible with the same splitting field in $\overline{\F}$.
Let $v$ be a $(p,q)$-exceptional endomorphism of a vector space. The following conditions are equivalent:
\begin{enumerate}[(i)]
\item $S(v)$ is a symplectic $(p,q)$-difference;
\item For every $z \in \F \cap (\Root(p)-\Root(q))$ and every odd integer $k \geq 1$,
the number of Jordan cells of $v$ of size $k$ for the eigenvalue $z$ is even.
\end{enumerate}
\end{cor}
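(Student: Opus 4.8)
The plan is to read this corollary off directly from Theorem \ref{theo:pandqirrsamesplittingfield}, applied to the symplectic pair $(b,u):=S(v)$. First I would recall that $S(v)=(S_V,h(v))$ with $h(v)=v\oplus v^t$, and that $v$ and $v^t$ are similar; hence, for every $z\in\overline{\F}$ and every $k\geq 1$, the number of Jordan cells of $h(v)$ of size $k$ for the eigenvalue $z$ equals twice the number of Jordan cells of $v$ of size $k$ for $z$ (this is immediate from $v^t\simeq v$ together with the compatibility of the Jordan canonical form with direct sums). I would also note that, since $u=h(v)$ and $v$ have the same spectrum, the pair $(b,u)=S(v)$ is $(p,q)$-exceptional as soon as $v$ is, so that Theorem \ref{theo:pandqirrsamesplittingfield} genuinely applies to $(b,u)$.

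The rest is a parity bookkeeping argument. Writing $j_k(v,z)$ for the number of Jordan cells of $v$ of size $k$ associated with the eigenvalue $z$ (as in the notation of Lemma \ref{lemma:nilpotent}), the discussion above gives $j_k(h(v),z)=2\,j_k(v,z)$ for all $z$ and $k$. By Theorem \ref{theo:pandqirrsamesplittingfield}, the pair $S(v)$ is a symplectic $(p,q)$-difference if and only if, for every $z\in\F\cap(\Root(p)-\Root(q))$ and every odd integer $k\geq 1$, the integer $j_k(h(v),z)=2\,j_k(v,z)$ is a multiple of $4$. Since $2\,j_k(v,z)$ is a multiple of $4$ precisely when $j_k(v,z)$ is even, this last condition is exactly condition (ii), and the equivalence follows.

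There is no genuine obstacle here: the only points deserving a word of care are confirming that the Jordan data of $h(v)=v\oplus v^t$ at an eigenvalue $z\in\F$ is exactly the doubling of the Jordan data of $v$ at $z$, and observing that the ``multiple of $4$ versus even'' translation need only be performed for eigenvalues lying in $\F$, since that is the only regime in which Theorem \ref{theo:pandqirrsamesplittingfield} imposes any constraint.
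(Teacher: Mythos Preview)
Your proof is correct and is precisely the argument the paper has in mind: the corollary is stated without proof as an immediate consequence of Theorem~\ref{theo:pandqirrsamesplittingfield}, and the deduction rests exactly on the observation that $h(v)=v\oplus v^t$ doubles the Jordan numbers of $v$, turning the ``multiple of $4$'' condition on $u=h(v)$ into the ``even'' condition on $v$.
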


From this corollary, the classification of indecomposable symplectic $(p,q)$-differences, as given in Table \ref{table:7}, is obvious.

\section{Exceptional symplectic $(p,q)$-differences: when $p$ and $q$ are irreducible with distinct splitting fields}
\label{section:pandqirrdiffsplittingfield}

Here, we assume that $p$ and $q$ are irreducible over $\F$, with distinct splitting fields in $\overline{\F}$.
As seen in the beginning of section 5 of \cite{dSPsumexceptional}, there are two main cases for $F_{p,q}$:

\begin{itemize}
\item If $p$ and $q$ have distinct discriminants (which is always the case if $\charac(\F) \neq 2$)
then $F_{p,q}$ is irreducible over $\F$.

\item If $p$ and $q$ have the same discriminant, then $\charac(\F)=2$ and
$F_{p,q}=(t^2+(\tr p)\, t+p(0)+q(0))^2$, and $t^2+(\tr p)\ t+p(0)+q(0)$ is irreducible.
Then, there are two additional subcases, as here $\delta=0$ and $t^2+(\tr p) t+p(0)+q(0)$ is a polynomial in $t^2-\delta t$
if and only if $\tr p=0$. So, either both $p$ and $q$ are separable and then
$t^2+(\tr p) t+p(0)+q(0)$ is not a polynomial in $t^2-\delta t$, or both $p$ and $q$ are inseparable
and then $t^2+(\tr p) t+p(0)+q(0)=t^2+p(0)+q(0)$ is a polynomial in $t^2-\delta t$.
\end{itemize}

Hence, except when $\charac(\F)=2$ and $\tr p=\tr q \neq 0$, the invariant factors of a $(p,q)$-exceptional endomorphism
$u$ are polynomials in $t^2-\delta t$, and the Symplectic Duplication Lemma allows us to conclude directly:

\begin{theo}\label{theo:pandqirrdistinctfieldsnonspecial}
Assume that $p$ and $q$ are irreducible, with distinct splitting fields in $\overline{\F}$. Assume furthermore that
$\charac(\F) \neq 2$ or $\tr(p)=\tr(q)=0$ or $\tr(p) \neq \tr(q)$.
Then for every $(p,q)$-exceptional endomorphism $v$, the symplectic pair $S(v)$
is a symplectic $(p,q)$-difference.
\end{theo}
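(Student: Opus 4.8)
The plan is to reduce everything to the Symplectic Duplication Lemma, just as in the proof of Theorem~\ref{theo:regular}, the only genuine work being a short bookkeeping of the shape of $F_{p,q}$ under the present hypotheses.

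First I would record what $F_{p,q}$ looks like, using the two-case analysis recalled immediately before the statement. The hypothesis excludes exactly the case $\charac(\F)=2$ with $\tr(p)=\tr(q)\neq 0$; so in all remaining cases either $p$ and $q$ have distinct discriminants, in which case $F_{p,q}$ is irreducible over $\F$ and $F_{p,q}(t)=\Lambda_{p,q}(t^2-\delta t)$, or $p$ and $q$ have the same discriminant — which forces $\charac(\F)=2$ and, since $\tr(p)=\tr(q)\neq 0$ is excluded, forces $\tr(p)=\tr(q)=0$, hence $\delta=0$ and $F_{p,q}=\bigl(t^2+p(0)+q(0)\bigr)^2$ with $t^2+p(0)+q(0)=(t^2-\delta t)+\bigl(p(0)+q(0)\bigr)$ irreducible over $\F$. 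In both cases $F_{p,q}$ is a power of a single monic irreducible polynomial $\rho\in\F[t]$, and $\rho=g_0(t^2-\delta t)$ for some nonconstant monic $g_0\in\F[t]$ (namely $g_0=\Lambda_{p,q}$ in the first case and $g_0=t+\bigl(p(0)+q(0)\bigr)$ in the second).

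Next I would invoke that $v$ is $(p,q)$-exceptional: every eigenvalue of $v$ in $\overline{\F}$ lies in $\Root(p)-\Root(q)=\Root(F_{p,q})=\Root(\rho)$, so $\rho$ is the only monic irreducible factor over $\F$ of the characteristic polynomial of $v$, and hence every invariant factor of $v$ equals $\rho^{\,n}=g_0^{\,n}(t^2-\delta t)$ for some $n\geq 1$. In particular each invariant factor of $v$ is a nonconstant polynomial in $t^2-\delta t$. Decomposing $v\simeq v_1\oplus\cdots\oplus v_N$ along its invariant factors, each $v_i$ is cyclic with minimal polynomial $r_i(t^2-\delta t)$ for a nonconstant monic $r_i\in\F[t]$, and the compatibility of the symplectic extension with direct sums yields $S(v)\simeq S(v_1)\bot\cdots\bot S(v_N)$.

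Finally, Corollary~\ref{cor:duplicationlemma} (a direct consequence of the Symplectic Duplication Lemma, Lemma~\ref{lemma:symplecticduplicationlemma}) shows that each $S(v_i)$ is a symplectic $(p,q)$-difference, and Proposition~\ref{prop:orthogonalsum} then gives that the orthogonal direct sum $S(v)$ is a symplectic $(p,q)$-difference, which is the claim. The only genuinely delicate step is the first one, keeping track of the characteristic-$2$ subcases so as to be sure that, outside the excluded case, $F_{p,q}$ is a power of an irreducible polynomial that is a polynomial in $t^2-\delta t$; once that is in hand, the rest of the argument is formal and mirrors the regular case verbatim.
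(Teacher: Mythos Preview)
Your proof is correct and follows exactly the approach the paper takes: the discussion immediately preceding the theorem does the same case analysis on $F_{p,q}$, concludes that outside the excluded case every monic irreducible factor of $F_{p,q}$ is a polynomial in $t^2-\delta t$, and then appeals directly to the Symplectic Duplication Lemma (via Corollary~\ref{cor:duplicationlemma}) after decomposing $v$ along its invariant factors. Your write-up simply makes explicit the details the paper leaves implicit.
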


It remains to consider the special case where $\charac(\F)=2$ and $\tr p =\tr q \neq 0$.

\begin{theo}\label{theo:pandqirrdistinctfieldsspecial}
Assume that $p$ and $q$ are irreducible with distinct splitting fields in $\overline{\F}$, that
$\charac(\F)=2$ and that $\tr p=\tr q\neq 0$.
Let $(b,u)$ be a symplectic pair in which $u$ is $(p,q)$-exceptional.
The following conditions are equivalent:
\begin{enumerate}[(i)]
\item $(b,u)$ is a symplectic $(p,q)$-difference;
\item For every odd integer $k \geq 1$, the number of invariant factors of $u$
that equal $(t^2-(\tr p)\, t+p(0)+q(0))^k$ is a multiple of $4$.
\end{enumerate}
\end{theo}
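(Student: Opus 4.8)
The plan is to prove the two implications separately, the converse (ii)$\Rightarrow$(i) being a routine consequence of earlier results and the direct implication (i)$\Rightarrow$(ii) resting on a self-contained lemma treating the case $r(u)=0$. Throughout, set $r:=t^2-(\tr p)\,t+p(0)+q(0)$, so that (as recalled at the start of this section) $\delta=0$, $F_{p,q}=r^2$, $r$ is irreducible, and $r$ is not a polynomial in $t^2-\delta t$. Since $u$ is $(p,q)$-exceptional, all its invariant factors are powers of $r$; write $j_k(u)$ for the number of them that equal $r^k$, so that condition (ii) reads: $j_k(u)\equiv 0\pmod 4$ for every odd $k$.

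For (ii)$\Rightarrow$(i): by Theorem \ref{theo:Scharlau}, $(b,u)\simeq S(v)$ for some endomorphism $v$, and the invariant factors of $u$ are those of $v$, each doubled, so $j_k(u)=2\,j_k(v)$ and (ii) gives $j_k(v)$ even for every odd $k$. Hence one can split $v\simeq v_1\oplus\cdots\oplus v_N$ where each $v_i$ is either cyclic with minimal polynomial $r^n$ for some even $n$, or the direct sum of two cyclic endomorphisms with minimal polynomial $r^n$ for some odd $n$. In the first case, in characteristic $2$ one has $r^n=\rho(t^2)=\rho(t^2-\delta t)$ with $\rho:=\bigl(s^2-(\tr p)^2 s+(p(0)+q(0))^2\bigr)^{n/2}$ nonconstant, so $S(v_i)$ is a symplectic $(p,q)$-difference by Corollary \ref{cor:duplicationlemma}. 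In the second case, the classification of exceptional $(p,q)$-differences of endomorphisms (section~5 of \cite{dSPsumexceptional}) shows that $v_i$ is a $(p,q)$-difference in its endomorphism algebra, whence $S(v_i)$ is a symplectic $(p,q)$-difference by Lemma \ref{lemma:fromA}. Thus $(b,u)\simeq S(v_1)\bot\cdots\bot S(v_N)$ is a symplectic $(p,q)$-difference by Proposition \ref{prop:orthogonalsum}.

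For (i)$\Rightarrow$(ii), write $u=u_1-u_2$ with $u_1,u_2$ being $b$-alternating and $p(u_1)=q(u_2)=0$. The first ingredient is the identity $r(u)=u_1u_2+u_2u_1$: since $\delta=0$, the pseudo-norm computation from the proof of the Commutation lemma gives $u^2=u_1u_2^\star+u_2u_1^\star+(p(0)+q(0))\,\id$, and expanding $u_i^\star=(\tr p)\,\id+u_i$ and using $u_1+u_2=u$ in characteristic $2$ yields the claim. The second ingredient: from $p(u_1)=0$ one gets $[u_1,r(u)]=u_1^2u_2-u_2u_1^2=(\tr p)\,r(u)$, and symmetrically $[u_2,r(u)]=(\tr p)\,r(u)$; therefore $u_i\,r(u)=r(u)(u_i+(\tr p)\,\id)$ and $(u_i+(\tr p)\,\id)\,r(u)=r(u)\,u_i$, from which an easy induction gives $u_i\,r(u)^k=r(u)^k u_i$ or $r(u)^k(u_i+(\tr p)\,\id)$ according to the parity of $k$; in particular $u_1$ and $u_2$ stabilize $\Ker r(u)^k$ and $\im r(u)^k$ for every $k\geq 1$. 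Now, $r(u)$ being $b$-selfadjoint, $(\im r(u)^j)^{\bot_b}=\Ker r(u)^j$, so the induced pair $(b,u)^{\im r(u)^j}$ is well defined and is again a symplectic $(p,q)$-difference (its subspace being stable under $u_1$ and $u_2$); a block-by-block inspection of the $r$-primary decomposition of $u$ shows its endomorphism $\widetilde u$ satisfies $j_k(\widetilde u)=j_{k+2j}(u)$. Likewise, for any symplectic $(p,q)$-difference $(b,w)$ the subspace $\Ker r(w)$ is stable under the relevant $w_1,w_2$, and $(b,w)^{\Ker r(w)}$ is a symplectic $(p,q)$-difference whose endomorphism $\overline w$ satisfies $r(\overline w)=0$ and $j_1(\overline w)=j_1(w)$. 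Choosing $j=(l-1)/2$ for an odd $l\geq 1$ and chaining these two reductions, $j_l(u)$ equals $j_1(w)$ for some symplectic $(p,q)$-difference $(b,w)$ with $r(w)=0$; so everything comes down to showing that $j_1(w)\equiv 0\pmod 4$ in that situation.

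This last point is the crux, and the main obstacle. When $r(w)=0$, the identity $r(w)=w_1w_2+w_2w_1=0$ shows (in characteristic $2$) that $w_1$ and $w_2$ commute; since $p$ and $q$ are irreducible quadratics, $w_1$ and $w_2$ are nonscalar with minimal polynomials $p$ and $q$, so $\mathcal{M}:=\F[w_1,w_2]$ is a field, isomorphic to $\F[t]/(p)\otimes_\F\F[t]/(q)$, which is a degree-$4$ separable extension of $\F$ (the factors are separable since $(\tr p)^2\neq 0$, and they have distinct splitting fields). Then the underlying space $W$ is an $\mathcal{M}$-vector space, and since $w_1,w_2$ are $b$-selfadjoint there is a unique nondegenerate $\mathcal{M}$-bilinear form $B$ with $b=\tr_{\mathcal{M}/\F}\circ B$, and $B$ is symmetric because $b$ is (automatically, in characteristic $2$) symmetric. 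The relations $b(x,x)=b(x,w_1x)=b(x,w_2x)=0$ translate into $B(x,x)$ being orthogonal, for the trace form of $\mathcal{M}$, to the $3$-dimensional subspace $\Vect_\F(1,w_1,w_2)$, hence into $B(x,x)\in\F q_0$ for a fixed nonzero $q_0\in\mathcal{M}$; after rescaling $B$ by $q_0^{-1}$, the map $x\mapsto B'(x,x)$ is additive into $\F$ with $B'(cx,cx)=c^2 B'(x,x)$ for $c\in\mathcal{M}$, and were it not identically zero it would force $\mathcal{M}^2\subseteq\F$, contradicting the separability of $\mathcal{M}/\F$. Therefore $B'$ is a nondegenerate alternating $\mathcal{M}$-bilinear form, so $\dim_{\mathcal{M}}W$ is even, and finally $j_1(w)=\tfrac12\dim_\F W=2\dim_{\mathcal{M}}W\equiv 0\pmod 4$. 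The difficulty here is precisely that the naive count — $\dim_\F W\equiv 0\pmod 4$, which follows at once from $W$ being an $\mathcal{M}$-module — loses one factor of $2$, and recovering it requires exploiting the interplay between the alternating form $b$ and the separable field $\mathcal{M}$.
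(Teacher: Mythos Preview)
Your proof is correct. The implication (ii)$\Rightarrow$(i) matches the paper's argument essentially verbatim: split $v$ into even-power cyclic pieces (handled by Corollary~\ref{cor:duplicationlemma}, since $r^{2m}=F_{p,q}^m$ is a polynomial in $t^2-\delta t$) and paired odd-power pieces (handled by theorem~5.2 of \cite{dSPsumexceptional} together with Lemma~\ref{lemma:fromA}).

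For (i)$\Rightarrow$(ii), however, your route is genuinely different from the paper's. The paper argues by extension of scalars: over $\K:=\F[x]$, where $x$ is a root of $r$ in the splitting field $\L$ of $pq$, the polynomials $p$ and $q$ remain irreducible and now share the common splitting field $\L$, so Theorem~\ref{theo:pandqirrsamesplittingfield} applies to the extended pair $(b,u)_\K$ and yields the divisibility by $4$ for the Jordan cells attached to the eigenvalue $x\in\K\cap(\Root(p)-\Root(q))$; since $r$ splits with simple roots over $\K$, those Jordan numbers coincide with the number of invariant factors of $u$ equal to $r^k$. Your argument stays over $\F$ throughout and is independent of Theorem~\ref{theo:pandqirrsamesplittingfield}: you identify $r(u)=u_1u_2+u_2u_1$, establish the twisted commutation $u_i\,r(u)=r(u)\bigl(u_i+(\tr p)\,\id\bigr)$ to get stability of $\Ker r(u)^k$ and $\im r(u)^k$, reduce to the situation $r(w)=0$, and then exploit that $w_1,w_2$ commute and generate a separable degree-$4$ extension $\mathcal{M}=\F[w_1,w_2]$; transferring $b$ through the trace to an $\mathcal{M}$-bilinear form and using the three alternation constraints forces that form to be symplectic over $\mathcal{M}$, so $\dim_{\mathcal{M}}W$ is even and $j_1(w)=2\dim_{\mathcal{M}}W\equiv 0\pmod 4$. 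The paper's approach is considerably shorter because it recycles the induction already packaged in Theorem~\ref{theo:pandqirrsamesplittingfield} and Lemma~\ref{lemma:nilpotent}; yours is longer but self-contained and supplies a structural explanation for the extra factor of $2$, namely the presence of an honest symplectic geometry over the biquadratic field $\mathcal{M}$.
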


\begin{proof}
Remember from theorem 5.2 of \cite{dSPsumexceptional} that, for every integer $k \geq 1$,
every endomorphism that is the direct sum of two cyclic endomorphisms with minimal polynomial
$\bigl(t^2-\tr (p)\, t+p(0)+q(0)\bigr)^k$ is a $(p,q)$-difference. Noting that
$\bigl(t^2-\tr (p)\, t+p(0)+q(0)\bigr)^{2k}=F_{p,q}^k$ is a polynomial in $t^2-\delta t$ for all $k \geq 1$, we can therefore
use the same technique as in the proof of Theorem \ref{theo:pandqirrsamesplittingfield} to obtain that condition (ii) implies condition (i),
with the help of the Symplectic Duplication Lemma.

Conversely, assume that $(b,u)$ is a symplectic $(p,q)$-difference.
Denote by $\L$ the splitting field of $pq$ in $\overline{\F}$. Over $\L$, the polynomial
$r:=t^2-\tr (p) t+p(0)+q(0)$ splits into $r=(t-x)(t-y)$, where $x \neq y$.
Hence $r$ is split over $\K:=\F[x]$ with distinct roots. Finally, $x \in \Root(p)-\Root(q)$.
Now, consider the extended symplectic pair $(b,u)_\K$ (through the natural extension of scalars technique),
which is still a symplectic $(p,q)$-difference.
Note that $u_\K$ has the same invariant factors as $u$. Let $k \geq 1$ be an odd integer.
The number of Jordan cells of $u_\K$ of size $k$ for the eigenvalue $x$ equals the number of invariant factors of $u$
that equal $r^k$. By Theorem \ref{theo:pandqirrsamesplittingfield}, the former is a multiple of $4$, and hence so is the latter.
This completes the proof.
\end{proof}

As a corollary, we obtain:

\begin{theo}
Assume that $p$ and $q$ are irreducible with distinct splitting fields in $\overline{\F}$, that
$\charac(\F)=2$ and that $\tr p=\tr q\neq 0$.
Let $v$ be a $(p,q)$-exceptional endomorphism of a vector space.
The following conditions are equivalent:
\begin{enumerate}[(i)]
\item $S(v)$ is a symplectic $(p,q)$-difference;
\item For every odd integer $k \geq 1$, an even number of invariant factors of $v$ equal $(t^2-\tr (p) t+p(0)+q(0))^k$.
\end{enumerate}
\end{theo}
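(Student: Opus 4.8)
The plan is to obtain this corollary as an immediate consequence of Theorem \ref{theo:pandqirrdistinctfieldsspecial} applied to the symplectic pair $S(v)$. Write $S(v)=(b,u)$, so that $u=h(v)=v\oplus v^t$. Since $u$ and $v$ have the same spectrum in $\overline{\F}$, the endomorphism $u$ is $(p,q)$-exceptional whenever $v$ is; hence $(b,u)$ satisfies the standing hypotheses of Theorem \ref{theo:pandqirrdistinctfieldsspecial} (irreducibility of $p$ and $q$, distinct splitting fields, $\charac(\F)=2$, $\tr p=\tr q\neq 0$).

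First I would record the relationship between the invariant factors of $v$ and those of $u=h(v)$. Because every endomorphism is similar to its transpose, $v^t\simeq v$, and therefore the invariant factors of $h(v)=v\oplus v^t$ are exactly those of $v$, each one repeated twice (this is precisely the observation made right after Theorem \ref{theo:Scharlau}). Consequently, setting $r:=t^2-(\tr p)\,t+p(0)+q(0)$, for every integer $k\geq 1$ the number of invariant factors of $u$ that equal $r^k$ is twice the number of invariant factors of $v$ that equal $r^k$. In particular, the former is a multiple of $4$ if and only if the latter is even.

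Finally I would invoke Theorem \ref{theo:pandqirrdistinctfieldsspecial} for the pair $(b,u)=S(v)$: condition (i) of the present corollary, namely that $S(v)$ is a symplectic $(p,q)$-difference, is equivalent to the requirement that for every odd integer $k\geq 1$ the number of invariant factors of $u$ equal to $r^k$ be a multiple of $4$. By the previous paragraph, this is equivalent to the number of invariant factors of $v$ equal to $r^k$ being even for every odd $k\geq 1$, which is condition (ii). I do not expect any genuine obstacle here; the only point requiring attention is the bookkeeping that the symplectic extension $h$ doubles invariant-factor multiplicities, and this is already available from the discussion accompanying Scharlau's theorem.
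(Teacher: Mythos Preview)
Your proposal is correct and is precisely the argument the paper intends: the theorem is stated as an immediate corollary of Theorem \ref{theo:pandqirrdistinctfieldsspecial}, and the only ingredient needed is that the invariant factors of $h(v)=v\oplus v^t$ are those of $v$ with doubled multiplicity, which you invoke exactly as the paper does after Theorem \ref{theo:Scharlau}. There is nothing to add.
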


From there, recovering the classification of the indecomposable exceptional symplectic $(p,q)$-differences, as given in
Table \ref{table:10}, is an easy task.

\end{document}